\theoremstyle{plain}
\newtheorem{Theo}{Theorem}[section]
\newtheorem{lemm}[Theo]{Lemma}
\newtheorem{prop}[Theo]{Proposition}
\theoremstyle{plain}
\theoremstyle{definition}
\theoremstyle{remark}
\newtheorem{Rema}[Theo]{Remark}
\newtheorem*{rema*}{Remark}
\newcommand{\ZZ}{\mathbb{Z}}  
\newcommand{\NN}{\mathbb{N}}
\newcommand{\RR}{\mathbb{R}} 
\newcommand{\DD}{\textnormal{D}}
\newcommand{\ap}{\text{\it a priori}}
\numberwithin{equation}{section}
\date{}
\begin{document}

\title[Global well-posedness for  Boussinesq System]
{Global well-posedness for a   Boussinesq- Navier-Stokes  System with critical dissipation}
\author[T. Hmidi] {Taoufik  Hmidi}
\address{IRMAR,   Universit\'e de Rennes 1\\ Campus de
Beaulieu\\ 35~042 Rennes cedex\\ France}
\email{taoufik.hmidi@univ-rennes1.fr}
\author[S. Keraani]{Sahbi  Keraani}
\address{IRMAR,   Universit\'e de Rennes 1\\ Campus de
Beaulieu\\ 35~042 Rennes cedex\\ France}
\email{sahbi.keraani@univ-rennes1.fr}
\author[F. Rousset]{Fr\'ed\'eric Rousset}
\address{IRMAR,   Universit\'e de Rennes 1\\ Campus de
Beaulieu\\ 35~042 Rennes cedex\\ France}
\email{frederic.rousset@univ-rennes1.fr}
\keywords{Boussinesq system,   transport equations,   paradifferential calculus}
\subjclass[2000]{ }

\begin{abstract}
In this paper we study a fractional diffusion Boussinesq model which couples
 a Navier-Stokes type equation with fractional diffusion for the velocity and a transport equation for the temperature. We establish  global well-posedness results with rough initial data.
\end{abstract}

\maketitle
\tableofcontents

\section{Introduction} 

The  aim  of  this paper is to study the global well-posedness for  the Boussinesq system with partial 
 fractional dissipation   \begin{equation}
      \label{ns} 
\left\{ \begin{array}{ll} 
\partial_{t}v+v\cdot\nabla v+|\DD|v+\nabla p=\theta e_{2}\\ 
\partial_{t}\theta+v\cdot\nabla\theta=0\\
\textnormal{div}\,  v=0\\
v_{| t=0}=v^{0},   \quad \theta_{| t=0}=\theta^{0}. 
\end{array} \right.
\end{equation}
Here,   we focus on the two-dimensional case,   the space variable $x= (x_{1},   x_{2})$
 is in $\mathbb{R}^2$,   the velocity field $v$    is given by  $v=(v^1,  v^2)$ and the
  pressure $p$ and the temperature $\theta$ are scalar functions.
   The factor $\theta e_{2}$ in the velocity equation,    the vector $e_2$ being  given by $(0,  1)$,   models
for example   the effect of gravity on  the fluid motion.
The operator $|\DD|$ stands for the multiplication by $|\xi| : = \sqrt{\xi_{1}^2+ \xi_{2}^2 }$ in the Fourier space.
    
If we take $\theta^0=0$ then the system \eqref{ns} is reduced to the generalized Navier-Stokes system which was studied in a series of   papers \cite{Wu1,  Wu2,  Wu3} for all space dimension $d\geq2.$ In particular,  
 for the generalized Navier-Stokes system in  dimension two,   
 from the Beale-Kato-Majda criterion  \cite{bkm}  and the  maximum principle for the vorticity \cite{cc},   
 smooth solutions are global in time.
 
 The system \eqref{ns} can be seen as part of the class of generalized Boussinesq systems.
These  systems under the form 
\begin{equation}
\label{bintro}
\left\{ 
\begin{array}{ll} 
\partial_{t}v+v\cdot\nabla v+\nabla p=\theta e_{2} +  \mathcal{D}_{v} v \\ 
\partial_{t}\theta+v\cdot\nabla\theta=  \mathcal{D}_{\theta} \theta\\
\textnormal{div}\,  v=0\\
v_{| t=0}=v^{0},   \quad \theta_{| t=0}=\theta^{0}  
\end{array} \right.
\end{equation}
are simple models widely used in the modelling of oceanic and atmospheric motions.
They  also appear in many physical problems,   we refer for example to \cite{Brenier}  
 for more details about the modelling issues.

 The operators $\mathcal{D}_{v} $ and 
    $\mathcal{D}_{\theta}$ whose form may vary are used to  take into account the possible
    effects of diffusion and dissipation in the fluid motion.  
    
   Mathematically,   the simplest model to study is the fully viscous model  when 
    $\mathcal{D}_{v}= \Delta$,    $\mathcal{D}_{\theta}= \Delta.$ The properties of the system
     are very similar to the one of the two-dimensional Navier-Stokes equation
      and  similar global well-posedness results can be obtained.

  The most difficult  model   for the mathematical study is the inviscid  one,   i.e. when $\mathcal{D}_{v}= \mathcal{D}_{\theta} = 0$.
   A local existence result of smooth solution can be proven  as for  symmetric hyperbolic quasilinear systems,  
    nevertheless,    it is not known if smooth solutions can develop singularities in finite time.
  Indeed,   the temperature $\theta$ is the solution of a transport equation  and the vorticity $\omega=
   \mbox{curl } v = \partial_{1} v^2 - \partial_{2} v^1$ solves the equation
   \begin{equation}
   \label{vortintro}
   \partial_{t} \omega + v \cdot \nabla \omega =  \partial_{1} \theta.
   \end{equation}
   The main difficulty  is that to get an $L^\infty$ estimate on $\omega$ which is crucial to prove
    global existence of smooth solutions for  Euler type  equation,   one needs to 
     estimate $\int_{0}^T ||\partial_{1} \theta ||_{L^\infty}$ and,    unfortunately,   no $\ap$  estimate
      on $\partial_{1} \theta $ is known.

   In order to understand the coupling between  the two equations in Boussinesq type systems,  
      there have been many recent works studying  Boussinesq systems with partial viscosity i.e. with
       a viscous term acting only in one equation. We refer for example to  \cite{ah,  Cha,  dp1,  dp,  dp2,  hk1,  hk}. 
       
     In this paper,   we shall focus on system \eqref{ns}  which corresponds to the case where the  heat conductivity 
      is neglected  and $\mathcal{D}_{v}= - |\DD|$.

   When considering the usual Navier-Stokes equation for the velocity,   i.e. for  $\mathcal{D}_{v}= \Delta$,  
        global well-posedness results were recently established in various functional spaces. In \cite{Cha},   Chae proved the global well-posedness for large  initial data $v^0,  \theta^0\in H^s$ with $s>2.$ This result was improved  by the first two authors \cite{hk1} for less regular initial data,   that is,    $v^0,  \theta^0\in H^s,  $ with $s>0.$ The uniqueness in the  energy space $L^2$  was recently  \mbox{proved in \cite{dp}. }   According to a recent work  of Danchin and Paicu \cite{dp2} one can construct global unique solution when the  dissipation  acts only in  the horizontal direction: $\mathcal{D}_{v}= \partial_{11}.$ 

To explain the new difficulties that appear  when a weaker diffusion  $\mathcal{D}_{v}= - |D|^\alpha,  $  $\alpha <2$
 is considered,   
let us   write the  system under the vorticity formulation.  
 By using  the vorticity  defined as  the scalar $\omega=\partial_1v^2-\partial_2 v^1,  $  we have to study the system 
\begin{equation*}
\left\{ \begin{array}{ll} 
\partial_{t}\omega+v\cdot\nabla \omega+|\DD|^\alpha \omega=\partial_1\theta,   \\ 
\partial_{t}\theta+v\cdot\nabla\theta=0,  \\
\omega_{| t=0}=\textnormal{curl } v^{0},   \quad \theta_{| t=0}=\theta^{0}. 
\end{array} \right. 
\end{equation*}
The standard $L^2$ energy estimate for this system gives
$$ {d \over dt } \|\omega(t) \|_{L^2}^2  +  \|\omega(t) \|_{\dot{H}^{\alpha  \over 2 } }^2
  \leq \|\theta(t) \|_{\dot{H}^{1- {\alpha \over 2 }}}^2,    \quad \|\theta(t)\|_{L^2} = \|\theta_{0}\|_{L^2}.$$
 When $\alpha=2$,   the combination of these two estimates provides the useful information
  that $\omega \in L^\infty_{loc} L^2\cap L^2_{loc}\dot{H}^{1}.$ When 
   $\alpha <2 $ since no $\ap$  estimate on  $ \|\theta \|_{\dot{H}^{1- {\alpha \over 2 }}} $ is known
     some additional  work is needed  in order to estimate $\omega$.  As in \cite{hk1},   
      the idea would be to use maximal regularity estimates for
        the semi-group $e^{-t |\DD|^\alpha }$ in order to compensate the loss of one
         derivative for $\theta$.
         Nevertheless,   some restriction will appear  in order to control the nonlinear term. 
       For the sake of clarity,    we shall focus on  the most difficult  case $\alpha =1$
       where this approach would fail:  we consider  \eqref{ns} 
            for which  the vorticity form of the system is
         \begin{equation*}
\left\{ \begin{array}{ll} 
\partial_{t}\omega+v\cdot\nabla \omega+|\DD|  \omega=\partial_1\theta,   \\ 
\partial_{t}\theta+v\cdot\nabla\theta=0,  \\
\omega_{| t=0}=\textnormal{curl } v^{0},   \quad \theta_{| t=0}=\theta^{0}. 
\end{array} \right. 
\end{equation*}
     This   is  the   critical case  in the sense  
   that the  gain  of  one derivative by the diffusion term   roughly  compensates  exactly   the loss of one derivative
    in $\theta$ in the vorticity equation and has the same order as the convection term.

  The main result of this paper is a  global well-posedness result for the system \eqref{ns} (see section \ref{preliminaries} for the  definitions  and the basic properties of Besov spaces).
   \begin{Theo}
   \label{theo2}  Let 
 $ \theta^0\in L^2\cap  B_{\infty,  1}^{0}$   and  $v^0$ be a divergence-free vector field belonging to $H^1\cap \dot W^{1,  p} $ with $p\in ]2,  +\infty[.$  Then the  system  \eqref{ns} has a unique global solution $(v,  \theta)$ such that
 $$
 v\in L^\infty_{\textnormal{loc}}(\RR_+; H^1\cap \dot W^{1,  p})\cap  L^1_{\textnormal{loc}}(\RR_+;B_{\infty,  1}^1)\quad and\quad \theta\in L^\infty_{\textnormal{loc}}(\RR_+; L^2\cap B_{\infty,  1}^{0}).
 $$
 
 \end{Theo}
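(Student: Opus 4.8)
The plan is to follow the classical scheme for global well-posedness of Boussinesq-type systems: first derive a priori bounds on the solution, then construct a solution by an approximation/compactness argument, and finally prove uniqueness. The novelty forced by the critical dissipation $|\DD|$ is entirely in the a priori estimates, so I will concentrate there. I would start from the basic conservation laws that come for free from the transport structure: $\|\theta(t)\|_{L^q}=\|\theta^0\|_{L^q}$ for every $q\in[2,\infty]$ (in particular $\|\theta(t)\|_{L^2}$ and $\|\theta(t)\|_{B^0_{\infty,1}}$ — the latter requires a logarithmic/transport estimate in Besov spaces, available once $\int_0^t\|\nabla v\|_{L^\infty}$ is under control), and the $L^2$ energy estimate for $v$, which gives $v\in L^\infty_{\textnormal{loc}}L^2\cap L^2_{\textnormal{loc}}\dot H^{1/2}$ after absorbing $\int\|\theta\|_{L^2}\|v\|_{L^2}$. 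The heart of the matter is to upgrade this to control of $\|\nabla v\|_{L^p}$ and then of $\int_0^t\|\omega\|_{B^0_{\infty,1}}$, which by Biot–Savart and the embedding $B^0_{\infty,1}\hookrightarrow L^\infty$ closes the bound on $\int_0^t\|\nabla v\|_{L^\infty}$ and hence propagates all the regularity.

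The key structural idea I would exploit is the same trick that works for $\alpha\ge 1$: introduce the good unknown $\Gamma := \omega - \mathcal R\theta$ where $\mathcal R$ is the Riesz-type operator $\mathcal R := \partial_1 |\DD|^{-1}$, chosen precisely so that $|\DD|\mathcal R\theta = \partial_1\theta$ cancels the forcing term in the vorticity equation. A direct computation gives
\begin{equation*}
\partial_t\Gamma + v\cdot\nabla\Gamma + |\DD|\Gamma = -[\mathcal R,v\cdot\nabla]\theta,
\end{equation*}
so $\Gamma$ solves a transport–diffusion equation whose source is a commutator which, crucially, is of order zero: $[\mathcal R,v\cdot\nabla]\theta$ gains one derivative back from the $\mathcal R$ and loses one from $\nabla\theta$, and can be estimated in $L^p$ (and in $B^0_{\infty,1}$) by $\|\nabla v\|_{L^p}\|\theta\|_{L^\infty}$ type bounds via paradifferential calculus — here the hypothesis $\theta^0\in L^2\cap B^0_{\infty,1}$ and $p<\infty$ are exactly what is needed. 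Granting an $L^p$ maximal-regularity / $L^\infty_tL^p_x$ estimate for the transport–diffusion operator $\partial_t+v\cdot\nabla+|\DD|$ (with divergence-free $v$, using the $L^p$ bounds on the drift established in the previous sections), one obtains $\Gamma\in L^\infty_{\textnormal{loc}}L^p$, hence $\omega=\Gamma+\mathcal R\theta\in L^\infty_{\textnormal{loc}}L^p$ since $\mathcal R$ is bounded on $L^p$ and $\|\theta(t)\|_{L^p}$ is conserved. That yields $v\in L^\infty_{\textnormal{loc}}\dot W^{1,p}$, and combined with $v\in L^\infty_{\textnormal{loc}}H^1$ and $p>2$ one gets $v\in L^\infty_{\textnormal{loc}}L^\infty$ but not yet $\int\|\nabla v\|_{L^\infty}<\infty$.

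To close the argument I would then run the estimate for $\Gamma$ one notch higher, in $B^0_{\infty,1}$: using the smoothing of $e^{-t|\DD|}$ at the level of Littlewood–Paley blocks one shows $\Gamma\in \widetilde L^1_{\textnormal{loc}}B^1_{\infty,1}$ provided the commutator source lies in $\widetilde L^1_{\textnormal{loc}}B^0_{\infty,1}$, and the latter is controlled by $\int_0^t\|\nabla v\|_{B^0_{\infty,1}}\|\theta\|_{B^0_{\infty,1}}$ plus lower-order terms; since $\|\theta\|_{B^0_{\infty,1}}$ is itself controlled by the transport estimate $\exp(C\int_0^t\|\nabla v\|_{L^\infty})$, one arrives at a Gronwall-type inequality for $X(t):=\int_0^t\|\nabla v\|_{B^0_{\infty,1}}$ that is closed once the $L^p$ bound above removes the genuinely dangerous low-frequency/transport interactions. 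Propagating $\omega\in\widetilde L^1_{\textnormal{loc}}B^1_{\infty,1}$ back to $v$ gives $v\in L^1_{\textnormal{loc}}B^1_{\infty,1}$, which is the missing bound. With all a priori estimates in hand, existence follows by smoothing the data, solving the regularized systems, and passing to the limit using the uniform bounds plus the Aubin–Lions compactness; uniqueness is proved at the level of two solutions by estimating the difference $(v_1-v_2,\theta_1-\theta_2)$ in a space with one degree less regularity (e.g. $L^2\times L^2$ for the velocity difference and a negative Besov norm for the temperature difference, to avoid the loss of derivative in the transport equation), again a logarithmic Gronwall argument. I expect the main obstacle to be the $B^0_{\infty,1}$-level estimate on $\Gamma$: controlling the commutator $[\mathcal R,v\cdot\nabla]\theta$ in $B^0_{\infty,1}$ requires a careful Bony decomposition, and the interplay between the barely-subcritical $L^p$ gain ($p$ finite) and the endpoint $B^0_{\infty,1}$ norm — in particular making sure no logarithmic divergence survives — is the delicate point.
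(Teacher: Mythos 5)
Your overall strategy --- the good unknown $\Gamma=\omega-\mathcal{R}\theta$ with $\mathcal{R}=\partial_1|\DD|^{-1}$, the transport--diffusion equation for $\Gamma$ with commutator source $[\mathcal{R},v\cdot\nabla]\theta$, the bootstrap $L^2\to L^p\to B^0_{\infty,1}$, and the uniqueness by an Osgood argument in $B^0_{2,\infty}\times B^{-1}_{2,\infty}$ --- is exactly the paper's. However, the two steps you treat as routine are where the actual work lies, and as written both have gaps. For the $L^\infty_tL^p$ bound on $\Gamma$: the commutator $[\mathcal{R},v\cdot\nabla]\theta$ is \emph{not} controlled in $L^p$ by $\|\nabla v\|_{L^p}\|\theta\|_{L^\infty}$; the paper only obtains such a bound in $B^0_{p,\infty}$ (Theorem \ref{propcom}(2)), which is too weak for a naive $L^p$ energy estimate. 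Instead the paper writes the source as $\textnormal{div}\big([\mathcal{R},v]\theta\big)$, pairs with $|\Gamma|^{r-2}\Gamma$, uses the generalized Bernstein inequality to produce $\|\Gamma\|_{L^{2r}}^r$ on the left, and estimates $[\mathcal{R},v]\theta$ in $\dot H^{1-s}$ with $s=\frac2r-\frac12$; this forces $r<4$, so the $L^p$ norm for $p\ge 4$ can only be propagated \emph{after} the Lipschitz bound is in hand (Proposition \ref{lpnorm}) --- a two-step structure your outline misses.

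The more serious gap is the closure at the $B^0_{\infty,1}$ level. If the source is only bounded by $\int_0^t\|\nabla v\|_{B_{\infty,1}^0}\|\theta\|_{B_{\infty,1}^0}\,d\tau$ while $\|\theta\|_{B_{\infty,1}^0}$ is itself controlled through $\int_0^t\|\nabla v\|_{L^\infty}$ (let alone its exponential), then the inequality for $X(t)=\int_0^t\|\nabla v\|_{B_{\infty,1}^0}\,d\tau$ is superlinear (Riccati-type) and yields only a local-in-time bound; hoping that ``the $L^p$ bound removes the dangerous interactions'' is precisely the missing idea. The paper's resolution is a genuine decoupling: Theorem \ref{propcom}(2) bounds $\|[\mathcal{R},v\cdot\nabla]\theta\|_{B^0_{p,\infty}}$ by $\|\nabla v\|_{L^p}\|\theta\|_{B^{0}_{\infty,\infty}}+\|v\|_{L^2}\|\theta\|_{L^2}$, i.e.\ only through the \emph{conserved} $L^\infty$ and $L^2$ norms of $\theta$, never through $\|\theta\|_{B^0_{\infty,1}}$; and the $\ell^1$ summability over frequencies is extracted not from the source but from the smoothing of $e^{-t|\DD|}$, working in $\widetilde L^\rho_tB^{2/r}_{r,1}$ with $\frac2r<\frac1\rho$ and embedding into $\widetilde L^\rho_tB^0_{\infty,1}$ (Proposition \ref{gggg}). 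Only because $\|\Gamma\|_{\widetilde L^1_tB^0_{\infty,1}}\le\Phi_1(t)$ is closed independently of $\|\theta\|_{B^0_{\infty,1}}$ does the coupled system $\omega=\Gamma+\mathcal{R}\theta$, $\|\theta\|_{B^0_{\infty,1}}\lesssim 1+\int_0^t\|\nabla v\|_{L^\infty}$ reduce to a \emph{linear} Gronwall inequality. You correctly identify this as the delicate point, but you do not supply the mechanism that resolves it.
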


 A few remarks are in order.

\begin{Rema}
From the $\ap$  estimates that we shall get in the proof of Theorem \ref{theo2},   it
 is possible to obtain the existence of various types of global  weak solutions,   this   is discussed 
  in section \ref{sectionbouss}.

At first,   it is possible to get the existence without uniqueness  of   Leray type weak solutions of 
 \eqref{ns} in the energy espace
 $$(v,   \theta)
 \in L^\infty_{loc}(\mathbb{R}_{+},   L^2
 ) \cap L^2_{loc}( \mathbb{R}_{+} \dot{H}^{1\over 2 }) \times L^\infty_{loc}(\mathbb{R}_{+},   L^2) $$
 by using the energy estimate for the system \eqref{ns}. We refer to 
  Proposition \ref{es-energ}.

  We also point out that this system has the following scaling: if $(v,  \theta)$ is a solution of  \eqref{ns} and $\lambda>0$  then $(v_\lambda,  \theta_\lambda)$ is also a solution,   where $v_\lambda(t,  x):=v(\lambda t,  \lambda x)$ and $\theta_\lambda(t,  x):=\lambda\theta(\lambda t,  \lambda x).$ As a consequence,   the space of initial data $\dot H^1\times L^2$ which  is invariant under this transformation  is  critical.

From the $\ap$  estimates that we shall establish (see Proposition \ref{prop500}),    we can also get 
 the existence   of   global  weak solutions  (but stronger than the previous ones) 
  by assuming only   that  $v^0\in H^1$ and $\theta^0\in L^2\cap L^r$ with $r>4$ 
   for example i.e. we can get global weak solutions which almost have a critical regularity.
   Nevertheless the uniqueness for  such solutions which do not satisfy the additional
    regularity assumptions stated in Theorem \ref{theo2}  remains  unsolved. 
 \end{Rema}

\begin{Rema} Our proof gives more time integrability  on the velocity $v$. More precisely ,   we have 
 $v\in \widetilde L^\rho_{\textnormal{loc}}(\RR_+;B_{\infty,  1}^1)$
 for all $\rho\in [1,  \frac p2[.$
\end{Rema}

Let us say a few words about the  main difficulties encountered  in the proof of Theorem \ref{theo2}.
Even if we neglect  the nonlinear terms  in the Boussinesq system,   which reduces the  system to the following one 
$$ \partial_{t} \omega + |\DD|\omega = \partial_{1} \theta,   \quad \partial_{t} \theta  = 0,  $$
it is not clear how   to perform  the standard  energy estimate on   $\omega$   
  without using neither the control of higher order derivatives of $\theta$ nor the maximal regularities of the heat kernel $e^{ - t|\DD|}$ which  would not be compatible with  the nonlinear problem. For example   for the  $L^2$ energy estimate,    one gets 
 $$ {1 \over 2} {d \over dt } \|\omega \|_{L^2}^2  + \|\omega \|_{\dot{H}^{1 \over 2 } }^2
  = \int_{\RR^2} \partial_{1} \theta \,   \omega\,   dx,   \quad \|\theta(t)\|_{L^2} = \|\theta_{0}\|_{L^2},  $$
  from which no conclusion can be made. 
  
The main idea in the proof of Theorem \ref{theo2},   that was  also  successfully used in the study of the Euler-Boussinesq system  \cite{HKR1} is to really use the  structural properties of the system
 solved by $(\omega,   \theta)$.
   We note that the symbol of the system which is given by 
  $$ \mathcal{A}(\xi)  =   \left( \begin{array}{cc}   -  | \xi | &  -i \xi_{1}  \\  0 &  0 \end{array} \right) $$
   is diagonalizable for $\xi \neq0$ with  eigenvalues 
      $0$ and $-|\xi|$  which are  real and simple.
      By using the Riesz transform $\mathcal{R}={ \partial_{1} \over |\DD| } $,  
       one gets  that the diagonal form of the system is given by
       $$ \partial_{t} \big( \omega - \mathcal{R} \theta \big) +  |\DD| \big( \omega - \mathcal{R} \theta\big)
    = 0,   \quad  \partial_{t} \theta = 0.$$
    This last form of the system is much more convenient in order to perform
       $\ap$  estimates.
     For example one gets immediately from the continuity on $L^2$ of $\mathcal{R}$ that
     $$\| \omega(t) \|_{L^2} + \|\theta (t) \|_{L^2} \leq C\big( \| \omega_{0} \|_{L^2} + \|\theta_{0} \|_{L^2}\big).$$ 
     
    To prove Theorem \ref{theo2},    we shall use the same idea,   we shall diagonalize the
     linear part of the system  and  then get  $\ap$  estimates from the study of  the new system.
   The main technical difficulty in  this program when one takes the nonlinear terms into account
    is to evaluate in a sufficiently sharp way the commutator
     $[\mathcal{R},   v \cdot \nabla ] $ between
     the  Riesz transform and  the convection operator. Such commutator estimates are  stated and proven 
      in section \ref{sectionRiesz} of the paper.

   The  remaining of the paper is organized as follows.
   
  Section \ref{preliminaries} is devoted to the definition of the needed functional spaces and
    the statement of some of their useful properties. Some technical lemmas are also given. 
    Section \ref{sectionRiesz} is devoted to the study of some  commutator estimates  involving
    the Riesz transform.
   Section \ref{sectionTD} is dedicated  to the study of linear transport-(fractional) diffusion equation. Basically two kind of estimates are given: smoothing effects and logrithmic estimate. 
    In  section \ref{sectionbouss} we discuss a first set of a priori estimates and  the issue of weak solutions.
   Section \ref{proof} is dedicated to the   proof of Theorem \ref{theo2}.
The last section is devoted to the proof of  some technical lemmas.
      \section{Notations and preliminaries}
    \label{preliminaries}
    \label{Notations and preliminaries}
       \subsection{Notations}Throughout this work we will use the following notations.
       
$\bullet$ For any positive  $A$ and $B$  the notation  $A\lesssim B$ means that there exist a positive  harmless constant $C$ such that $A\le CB$. 

$\bullet$ For any tempered distribution $u$  both $ \widehat u$  and $\mathcal F u$ denote the Fourier transform of $u$.

$\bullet$ Pour every $p\in [1,  \infty]$,   $\|\cdot\|_{L^p}$ denotes the norm in the Lebesgue space $L^p$.

$\bullet$ The norm  in the mixed space time Lebesgue space $L^p([0,  T],  L^r(\mathbb R^d)$ is denoted by  $\|\cdot\|_{L^p_TL^r}$ (with the obvious generalization to  $\|\cdot\|_{L^p_T\mathcal X} $ for any normed space $\mathcal X$).

$\bullet$ For any pair of operators $P$ and $Q$ on some Banach space $\mathcal{X}$,   the commutator $[P,  Q]$ is given by $PQ-QP$.

$\bullet$ For $p\in[1,  \infty]$,   we denote by $\dot{W}^{1,  p}$ the space of distributions $u$ such \mbox{that $\nabla u\in L^p.$ }

    \subsection{Functional spaces}
  Let us introduce the so-called   Littlewood-Paley decomposition and the corresponding  cut-off operators. 
There exists two radial positive  functions  $\chi\in \mathcal{D}(\RR^d)$ and  $\varphi\in\mathcal{D}(\RR^d\backslash{\{0\}})$ such that
\begin{itemize}
\item[\textnormal{i)}]
$\displaystyle{\chi(\xi)+\sum_{q\geq0}\varphi(2^{-q}\xi)=1}$;$\quad \displaystyle{\forall\,  \,  q\geq1,  \,   \textnormal{supp }\chi\cap \textnormal{supp }\varphi(2^{-q})=\varnothing}$\item[\textnormal{ii)}]
 $ \textnormal{supp }\varphi(2^{-j}\cdot)\cap
\textnormal{supp }\varphi(2^{-k}\cdot)=\varnothing,  $ if  $|j-k|\geq 2$.
\end{itemize}

For every $v\in{\mathcal S}'(\RR^d)$ we set 
  $$
\Delta_{-1}v=\chi(\hbox{D})v~;\,   \forall
 q\in\NN,  \;\Delta_qv=\varphi(2^{-q}\hbox{D})v\quad\hbox{ and  }\;
 S_q=\sum_{-1\leq p\leq q-1}\Delta_{p}.$$
 The homogeneous operators are defined by
 $$
 \dot{\Delta}_{q}v=\varphi(2^{-q}\hbox{D})v,  \quad \dot S_{q}v=\sum_{j\leq q-1}\dot\Delta_{j}v,  \quad\forall q\in\ZZ.
 $$
From   \cite{b}  we split the product 
  $uv$ into three parts: $$
uv=T_u v+T_v u+R(u,  v),  
$$
with
$$T_u v=\sum_{q}S_{q-1}u\Delta_q v,  \quad  R(u,  v)=\sum_{q}\Delta_qu\tilde\Delta_{q}v  \quad\hbox{and}\quad \tilde\Delta_{q}=\Delta_{q-1}+\Delta_{q}+\Delta_{q+1}.
$$
  
Let us now define inhomogeneous  Besov spaces. For
 $(p,  r)\in[1,  +\infty]^2$ and $s\in\RR$ we define  the inhomogeneous Besov \mbox{space $B_{p,  r}^s$} as 
the set of tempered distributions $u$ such that
$$\|u\|_{B_{p,  r}^s}:=\Big( 2^{qs}
\|\Delta_q u\|_{L^{p}}\Big)_{\ell ^{r}}<+\infty.$$
The homogeneous Besov space $\dot B_{p,  r}^s$ is defined as the set of  $u\in\mathcal{S}'(\RR^d)$ up to polynomials such that
$$
\|u\|_{\dot B_{p,  r}^s}:=\Big( 2^{qs}
\|\dot\Delta_q u\|_{L^{p}}\Big)_{\ell ^{r}(\ZZ)}<+\infty.
$$
Notice that the usual  Sobolev spaces $H^s$ coincide with $B_{2,  2}^s$ for every $s\in\RR$ and that  the homogeneous spaces $\dot{H}^s$ coincide with $\dot{B}_{2,  2}^s.$

We shall also use need  some mixed space-time spaces. Let $T>0$ \mbox{and $\rho\geq1,  $} we denote by $L^\rho_{T}B_{p,  r}^s$ the space of distributions $u$ such that 
$$
\|u\|_{L^\rho_{T}B_{p,  r}^s}:= \Big\|\Big( 2^{qs}
\|\Delta_q u\|_{L^p}\Big)_{\ell ^{r}}\Big\|_{L^\rho_{T}}<+\infty.$$
We say that 
$u$ belongs to the space
 $\widetilde L^\rho_{T}{B_{p,  r}^s}$ if
 $$
 \|u\|_{ \widetilde L^\rho_{T}{B_{p,  r}^s}}:= \Big( 2^{qs}
\|\Delta_q u\|_{L^\rho_{T}L^p}\Big)_{\ell ^{r}}<+\infty .$$
By  a direct application  of 
the Minkowski inequality,   we have the following links between these spaces. 

Let $ \varepsilon>0,  $ then 
$$
L^\rho_{T}B_{p,  r}^s\hookrightarrow\widetilde L^\rho_{T}{B_{p,  r}^s}\hookrightarrow{L^\rho_{T}}{B_{p,  r}^{s-\varepsilon}}
,  \,  \textnormal{if}\quad  r\geq \rho,  $$
$$
{L^\rho_{T}}{B_{p,  r}^{s+\varepsilon}}\hookrightarrow\widetilde L^\rho_{T}{B_{p,  r}^s}\hookrightarrow L^\rho_{T}B_{p,  r}^s,  \,   \textnormal{if}\quad 
\rho\geq r.
$$
 We will  make continuous use of Bernstein inequalities (see  \cite{che1} for instance).
\begin{lemm}\label{lb}\;
 There exists a constant $C$ such that for $q,  k\in\NN,  $ $1\leq a\leq b$ and for  $f\in L^a(\RR^d)$,   
\begin{eqnarray*}
\sup_{|\alpha|=k}\|\partial ^{\alpha}S_{q}f\|_{L^b}&\leq& C^k\,  2^{q(k+d(\frac{1}{a}-\frac{1}{b}))}\|S_{q}f\|_{L^a},  \\
\ C^{-k}2^
{qk}\|{\Delta}_{q}f\|_{L^a}&\leq&\sup_{|\alpha|=k}\|\partial ^{\alpha}{\Delta}_{q}f\|_{L^a}\leq C^k2^{qk}\|{\Delta}_{q}f\|_{L^a}.
\end{eqnarray*}

\end{lemm}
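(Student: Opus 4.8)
The plan is to exploit the Fourier-support localization of the Littlewood--Paley blocks $S_q f$ and $\Delta_q f$ together with Young's convolution inequality; the only genuinely delicate point is to obtain constants that grow at most geometrically in $k$.

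First I would record the localization coming from property i). The symbol of $S_q=\sum_{-1\le p\le q-1}\Delta_p$ is $\chi(\xi)+\sum_{0\le p\le q-1}\varphi(2^{-p}\xi)$, which is supported in a ball $\{|\xi|\le c_0 2^q\}$, whereas for $q\in\NN$ the symbol $\varphi(2^{-q}\xi)$ of $\Delta_q$ is supported in a fixed dyadic annulus $\{c_1 2^q\le|\xi|\le c_2 2^q\}$. I fix once and for all $\widetilde\chi\in\mathcal D(\RR^d)$ with $\widetilde\chi\equiv1$ on $B(0,c_0)$ and $\widetilde\varphi\in\mathcal D(\RR^d\setminus\{0\})$ with $\widetilde\varphi\equiv1$ on the support of $\varphi$. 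Then $\widetilde\chi(2^{-q}D)$ (resp. $\widetilde\varphi(2^{-q}D)$) acts as the identity on the relevant spectrum, giving the reproducing identities $S_q f=\widetilde\chi(2^{-q}D)S_q f$ and $\Delta_q f=\widetilde\varphi(2^{-q}D)\Delta_q f$.

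For the upper estimates I would write each derivative as a rescaled Fourier multiplier. Since $\partial^\alpha\widetilde\chi(2^{-q}D)$ has symbol $(i\xi)^\alpha\widetilde\chi(2^{-q}\xi)=2^{q|\alpha|}g_\alpha(2^{-q}\xi)$ with $g_\alpha(\eta)=(i\eta)^\alpha\widetilde\chi(\eta)$, one gets, for $|\alpha|=k$, that $\partial^\alpha S_q f=2^{qk}\,\big(2^{qd}\check g_\alpha(2^q\cdot)\big)*S_q f$. Young's inequality with $1+\tfrac1b=\tfrac1a+\tfrac1c$, together with the scaling $\|2^{qd}\check g_\alpha(2^q\cdot)\|_{L^c}=2^{qd(1/a-1/b)}\|\check g_\alpha\|_{L^c}$, yields $\|\partial^\alpha S_q f\|_{L^b}\le 2^{q(k+d(1/a-1/b))}\|\check g_\alpha\|_{L^c}\|S_q f\|_{L^a}$. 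The same computation with $\widetilde\varphi$ in place of $\widetilde\chi$ and $b=a$ gives the upper half of the second inequality, the prefactor reducing to $2^{qk}$. The heart of the matter, and the step I expect to be the main obstacle, is to bound $\|\check g_\alpha\|_{L^c}$ by $C^k$ uniformly over $|\alpha|=k$: here $\check g_\alpha=\partial^\alpha\check{\widetilde\chi}$ is the inverse transform of a fixed compactly supported smooth function, so integrating by parts $2N$ times against $e^{ix\cdot\xi}$ gives $(1+|x|^2)^N|\check g_\alpha(x)|\lesssim\int|(1-\lap_\xi)^N((i\xi)^\alpha\widetilde\chi)|\,d\xi$; on the fixed support of $\widetilde\chi$ the Leibniz expansion produces a factor $R^{k}$ (from $|\xi^\alpha|\le R^k$) times a prefactor that is only polynomial in $k$. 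Choosing $N>d/2$ and absorbing the polynomial factor into a slightly larger geometric constant gives $\|\check g_\alpha\|_{L^c}\le C^k$, and one must verify that this $k$-dependence stays geometric rather than factorial, uniformly in the multi-index and in the exponents.

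For the lower bound I would reconstruct $\Delta_q f$ from its $k$-th order derivatives via the multinomial identity $|\xi|^{2k}=\sum_{|\alpha|=k}\tfrac{k!}{\alpha!}\xi^{2\alpha}$. Setting $\Psi_\alpha(\xi)=(-i\xi)^\alpha|\xi|^{-2k}\widetilde\varphi(\xi)$, which is smooth and supported away from the origin and hence free of singularity, one checks that $\sum_{|\alpha|=k}\tfrac{k!}{\alpha!}\Psi_\alpha(\xi)(i\xi)^\alpha=\widetilde\varphi(\xi)$, so that after rescaling $\Delta_q f=2^{-qk}\sum_{|\alpha|=k}\tfrac{k!}{\alpha!}\Psi_\alpha(2^{-q}D)\partial^\alpha\Delta_q f$. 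Estimating block by block with Young's inequality, using $\sum_{|\alpha|=k}\tfrac{k!}{\alpha!}=d^k$ and the bound $\|\check\Psi_\alpha\|_{L^1}\le C^k$ (obtained exactly as above, the extra factor $|\xi|^{-2k}$ being harmless on the fixed annulus where $|\xi|$ is bounded below), gives $\|\Delta_q f\|_{L^a}\le (dC)^k\,2^{-qk}\sup_{|\alpha|=k}\|\partial^\alpha\Delta_q f\|_{L^a}$, which is the claimed lower estimate. Throughout, the constants depend only on $d$, $\chi$ and $\varphi$, so they are harmless in the sense fixed in the Notations.
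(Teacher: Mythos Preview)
Your argument is correct and is precisely the standard proof of the Bernstein inequalities: reproducing multipliers adapted to the dyadic ball and annulus, Young's inequality after rescaling, and the multinomial reconstruction $|\xi|^{2k}=\sum_{|\alpha|=k}\tfrac{k!}{\alpha!}\xi^{2\alpha}$ for the lower bound, with the geometric growth in $k$ coming from the fixed Fourier support. The paper itself does not give a proof of this lemma but simply refers the reader to Chemin's monograph, where exactly this argument is carried out; so there is nothing to contrast.
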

The following result generalizes the classical Gronwall inequality,   see Lemma 5.2.1 \cite{che1} for the proof. It will be very useful in the proof of the uniqueness part
of \mbox{Theorem \ref{theo2}.}
\begin{lemm}[Osgood lemma]
\label{osgood}
Let  $\gamma\in L^{1}_{\rm loc}(\RR_+;\RR_+)$,    $\mu$ a continuous non decreasing function,   $a\in\RR_+$ and
 $\alpha$  a measurable  function  satisfying  
$$
0\le \alpha(t)\leq a+\int_0^t\gamma(\tau)\mu(\alpha(\tau))d\tau,   \qquad\forall t\in\RR_+.
$$
If we assume that $a>0$ then   
$$
-\mathcal{M}(\alpha(t))+\mathcal{M}(a)\leq\int_0^t\gamma(\tau)d\tau \quad\hbox{with}\quad \mathcal{M}(x):=\int_x^1\frac{dr}{\mu(r)}\cdot
$$
If we assume  $a=0$  and $\lim_{x\to 0^+}\mathcal{M}(x)=+\infty,  $ then $\alpha(t)=0,   \forall t\in\RR_+.$
\end{lemm}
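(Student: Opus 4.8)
The plan is to reduce the integral inequality to a differential inequality for a majorant and then integrate it against the primitive $\mathcal{M}$. First I would set
$$A(t):=a+\int_0^t\gamma(\tau)\mu(\alpha(\tau))\,d\tau,$$
so that by hypothesis $\alpha(t)\le A(t)$ for every $t$, and $A$ is absolutely continuous on every compact interval because $\gamma\,\mu(\alpha)\in L^1_{\rm loc}$ (note $\mu\circ\alpha$ is measurable since $\mu$ is continuous and $\alpha$ measurable). Here $\mu$ is understood to be positive on $(0,\infty)$, as is needed for $\mathcal{M}$ to be well defined; since moreover $\gamma\ge0$ and $\mu$ is non-decreasing, $A$ is non-decreasing with $A(t)\ge a$. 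For a.e.\ $t$ we have $A'(t)=\gamma(t)\mu(\alpha(t))\le\gamma(t)\mu(A(t))$, the inequality coming from the monotonicity of $\mu$ together with $\alpha(t)\le A(t)$.

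Assume first $a>0$. Then $A(t)\ge a>0$, so $\mu(A(t))>0$ and I may divide to obtain $A'(t)/\mu(A(t))\le\gamma(t)$ for a.e.\ $t$. Because $\mathcal{M}'(x)=-1/\mu(x)$ on $(0,\infty)$ and $A$ stays bounded away from $0$, the composition $t\mapsto\mathcal{M}(A(t))$ is absolutely continuous and the chain rule gives, for a.e.\ $t$,
$$\frac{d}{dt}\,\mathcal{M}(A(t))=-\frac{A'(t)}{\mu(A(t))}\ge-\gamma(t).$$
Integrating over $[0,t]$ and recalling $A(0)=a$ yields $\mathcal{M}(a)-\mathcal{M}(A(t))\le\int_0^t\gamma(\tau)\,d\tau$. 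Since $\mathcal{M}$ is non-increasing and $\alpha(t)\le A(t)$, one has $\mathcal{M}(\alpha(t))\ge\mathcal{M}(A(t))$, whence
$$\mathcal{M}(a)-\mathcal{M}(\alpha(t))\le\mathcal{M}(a)-\mathcal{M}(A(t))\le\int_0^t\gamma(\tau)\,d\tau,$$
which is exactly the asserted estimate.

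For the case $a=0$, I would use a limiting argument. For any $\varepsilon>0$ the hypothesis still holds with $a$ replaced by $\varepsilon$ (the right-hand side only increases), so the first part applies and gives $\mathcal{M}(\varepsilon)\le\mathcal{M}(\alpha(t))+\int_0^t\gamma(\tau)\,d\tau$. If $\alpha(t_0)>0$ for some $t_0$, then $\mathcal{M}(\alpha(t_0))$ is finite and so is $\int_0^{t_0}\gamma$; letting $\varepsilon\to0^+$, the left-hand side tends to $+\infty$ by the assumption $\lim_{x\to0^+}\mathcal{M}(x)=+\infty$, a contradiction. Hence $\alpha(t)=0$ for all $t$.

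The one step I would handle with care is the chain-rule identity $\frac{d}{dt}\mathcal{M}(A(t))=\mathcal{M}'(A(t))A'(t)$ for the merely absolutely continuous function $A$: this is legitimate precisely because $\mathcal{M}$ is $C^1$ on $(0,\infty)$ (as $\mu$ is continuous and positive there) and, on each finite interval, $A$ takes values in a compact subset of $(0,\infty)$ thanks to the lower bound $A(t)\ge a>0$. It is exactly the failure of this bound when $a=0$ that forces the separate approximation argument above; the remainder is routine bookkeeping with the signs of the non-increasing function $\mathcal{M}$.
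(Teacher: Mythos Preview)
Your argument is correct and is essentially the classical proof of Osgood's lemma; the paper itself does not supply a proof but refers to Lemma~5.2.1 in \cite{che1}, where the very same majorant $A(t)=a+\int_0^t\gamma\,\mu(\alpha)$ is introduced, differentiated, and integrated against $\mathcal{M}$, followed by the same $\varepsilon$--perturbation for the case $a=0$. Your explicit remark on the validity of the chain rule for $\mathcal{M}\circ A$ (using that $A$ stays in a compact subset of $(0,\infty)$ on finite intervals when $a>0$) is a welcome point of rigor that is usually left implicit.
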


\begin{Rema}\label{osg23}
In the particular case $\mu(r)=r(1-\log r)$ one can show the following estimate,   see Theorem 5.2.1 \cite{che1}: for every $t\in\RR_+$
$$
a\leq e^{1-\exp{\int_0^t\gamma(\tau)d\tau}}\Longrightarrow \alpha(t)\le a^{\exp{-\int_0^t\gamma(\tau)d\tau}}e^{1-\exp{(-\int_0^t\gamma(\tau)d\tau)}}.
$$

\end{Rema}

\section{Riesz transform and commutators}
\label{sectionRiesz}
A crucial step in the  implementation of  the strategy exposed in the introduction for the proof of Theorem \ref{theo2},    is the study  of  commutators between the Riesz transform  $\mathcal{R}={\partial_{1} }/{  |D|}$ and the convection
 operator $v \cdot \nabla $. The results of this section hold for all space dimension $d\geq 2$.

Let us first recall  some well-known properties of the Riez operator. 
     \begin{prop}
    \label{cor1}  Let  $\mathcal{R}$ be the Riez operator $\mathcal{R}={\partial_{1} }/{  |D|}.$  Then the following hold true.
    
 {\rm (1)}  For every $p \in ]1,   +\infty[,  $ 
    \begin{equation*}
  \label{CZ}
   \| \mathcal{R}  \|_{\mathcal{L}(L^p)} \lesssim 1.
   \end{equation*}

{\rm (2)} Let $\mathcal C$ be a fixed ring. Then,   there exists $\psi\in \mathcal S$
whose spectum does not meet the origin such that 
$$
\mathcal{R} f=2^{dq}\psi(2^{q}\cdot)\star f
$$
for every $f$ with Fourier transform supported  in $2^q\mathcal C$. In particular,  
$\mathcal{R}\Delta_q$ is uniformly bounded (with respct to $q\in \mathbb N$) in $L^p$ for every $p\in [1,  +\infty].$
\end{prop}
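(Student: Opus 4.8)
The plan is to treat the two assertions of the proposition separately; both are classical facts about Fourier multipliers and do not use the structure of the Boussinesq system. For (1), one observes that $\mathcal{R}$ is the Fourier multiplier with symbol $m(\xi)=i\xi_{1}/|\xi|$, which is $C^{\infty}$ on $\RR^d\setminus\{0\}$ and homogeneous of degree $0$; in particular it satisfies the Mikhlin--H\"ormander condition $|\partial^{\alpha}m(\xi)|\lesssim|\xi|^{-|\alpha|}$ for every multi-index $\alpha$. Hence $\mathcal{R}$ is a Calder\'on--Zygmund operator, and $\|\mathcal{R}\|_{\mathcal{L}(L^p)}\lesssim 1$ for $p\in]1,+\infty[$ follows from the Mikhlin multiplier theorem (any standard reference suffices); there is nothing to do beyond recalling this.

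For (2), fix the ring $\mathcal{C}$ and choose $\widetilde\varphi\in\mathcal{D}(\RR^d\setminus\{0\})$ with $\widetilde\varphi\equiv 1$ on a neighbourhood of $\mathcal{C}$, so that $\mathrm{supp}\,\widetilde\varphi$ is still a ring not meeting the origin. If $\widehat f$ is supported in $2^q\mathcal{C}$, then $\widehat f(\xi)=\widetilde\varphi(2^{-q}\xi)\,\widehat f(\xi)$, so, using the $0$-homogeneity $m(2^{-q}\xi)=m(\xi)$,
$$
\widehat{\mathcal{R}f}(\xi)=m(\xi)\,\widetilde\varphi(2^{-q}\xi)\,\widehat f(\xi)=g(2^{-q}\xi)\,\widehat f(\xi),\qquad g(\eta):=\frac{i\eta_{1}}{|\eta|}\,\widetilde\varphi(\eta).
$$
Since $g\in\mathcal{D}(\RR^d\setminus\{0\})$, one sets $\psi:=\mathcal{F}^{-1}g\in\mathcal{S}$; its spectrum lies in $\mathrm{supp}\,g\subset\mathrm{supp}\,\widetilde\varphi$, which avoids the origin. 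The dilation rule for the Fourier transform gives $\mathcal{F}^{-1}\big[g(2^{-q}\cdot)\big](x)=2^{dq}\psi(2^qx)$, whence $\mathcal{R}f=2^{dq}\psi(2^q\cdot)\star f$.

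For the last assertion, it suffices to take $\mathcal{C}$ to be a ring containing $\mathrm{supp}\,\varphi$: then for every $q\in\NN$ the function $\Delta_q f$ has Fourier transform supported in $2^q\mathcal{C}$, so the representation above applies with $f$ replaced by $\Delta_q f$, and Young's convolution inequality yields
$$
\|\mathcal{R}\Delta_q f\|_{L^p}\le\big\|2^{dq}\psi(2^q\cdot)\big\|_{L^1}\,\|\Delta_q f\|_{L^p}=\|\psi\|_{L^1}\,\|\Delta_q f\|_{L^p}
$$
for all $p\in[1,+\infty]$, with $\|\psi\|_{L^1}$ independent of $q$. The only points requiring a little care are the verification of the Mikhlin condition for $m$ (routine, as $m$ is smooth and $0$-homogeneous away from the origin) and the normalization in the dilation identity, so that the convolution kernel comes out exactly as $2^{dq}\psi(2^q\cdot)$ with a single fixed $\psi$; there is no genuine obstacle here.
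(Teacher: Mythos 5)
Your proof is correct and follows exactly the route the paper intends: the paper simply cites the classical Calder\'on--Zygmund/Mikhlin multiplier theorem for (1) and declares (2) obvious, and your argument is the standard verification of those two facts (homogeneity of the symbol $i\xi_{1}/|\xi|$, a cut-off $\widetilde\varphi$ equal to $1$ on the ring, the dilation identity, and Young's inequality for the uniform bound on $\mathcal{R}\Delta_q$). Nothing is missing.
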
 
The property 
(1) is  a classical Calder\'on-Zygmund theorem (see \cite{Stein} for instance) and (2) is obvious.

The proof of the next lemma is easy  and can be found in  \cite{HKR1}.
 \begin{lemm}
 \label{commu} Let  $p\in[1,  \infty]$,    $f,  g$ and $h$ be three functions such that $\nabla f\in L^p,   g\in L^\infty$ and $xh\in L^{1}$. Then,  
 $$
 \|h\star(fg)-f(h\star g)\|_{L^p}\leq \|xh\|_{L^{1}}\|\nabla f\|_{L^p}\|g\|_{L^{\infty}}.
 $$
\end{lemm}

\

Here is the  main result of this section.
\begin{Theo}\label{propcom}
Let  $v$ be is  a smooth  divergence-free  vector field. Then the following hold true.

{\rm (1)} For every $s\in]0,  1[$
\begin{eqnarray*}
\|[\mathcal{R},   v]\theta\|_{H^s}\lesssim_s \|\nabla v\|_{L^2}\|\theta\|_{B_{\infty,  2}^{s-1}}+\|v\|_{L^2}\|\theta\|_{L^2},  
\end{eqnarray*}
for every smooth scalar function $\theta$.

{\rm (2)} For every $p\in [2,  \infty]$
$$
\|[\mathcal{R},   v\cdot\nabla]\theta\|_{B_{p,  \infty}^0}\lesssim_p \|\nabla v\|_{L^p}\|\theta\|_{B_{\infty,  \infty}^{0}}+\|v\|_{L^2}\|\theta\|_{L^2},  
$$
for every smooth scalar function $\theta$.

\end{Theo}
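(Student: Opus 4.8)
The plan is to decompose the commutator $[\mathcal{R}, v\cdot\nabla]\theta$ using Bony's paraproduct decomposition and then estimate each resulting piece separately, localizing in frequency via the Littlewood-Paley blocks. Write $v\cdot\nabla\theta = \operatorname{div}(v\theta)$ since $\operatorname{div} v = 0$, so that $[\mathcal{R}, v\cdot\nabla]\theta = \operatorname{div}\big(\mathcal{R}(v\theta) - v\,\mathcal{R}\theta\big) + (\text{lower order terms from moving the divergence})$; more precisely one writes $[\mathcal{R}, v\cdot\nabla]\theta = \sum_j \partial_j [\mathcal{R}, v^j]\theta - [\mathcal{R}, \partial_j v^j]\theta$ and the last term vanishes. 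So it suffices to control $[\mathcal{R}, v^j]\theta$ in $B_{p,\infty}^1$ and take the derivative, or work directly. For part (1) I would split $[\mathcal{R},v]\theta = [\mathcal{R}, T_v]\theta + T_{\mathcal{R}\theta}v - \mathcal{R}(T_\theta v) + [\mathcal{R}, R(v,\cdot)]\theta$ (schematically), i.e. isolate the paraproduct $T_v\theta$, the paraproduct $T_\theta v$, and the remainder $R(v,\theta)$, and commute $\mathcal{R}$ past each.

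For the paraproduct term $[\mathcal{R}, T_v]\theta = \sum_q [\mathcal{R}, S_{q-1}v]\Delta_q\theta$, the key is Lemma \ref{commu}: since $\mathcal{R}\Delta_q$ acts as convolution with $2^{dq}\psi(2^q\cdot)$ where $x\psi\in L^1$ (Proposition \ref{cor1}(2)), each summand obeys $\|[\mathcal{R}, S_{q-1}v]\Delta_q\theta\|_{L^p}\lesssim 2^{-q}\|\nabla S_{q-1}v\|_{L^\infty}\|\Delta_q\theta\|_{L^p}$ — but to keep $\nabla v$ in $L^2$ (resp. $L^p$) rather than $L^\infty$ one instead puts $\Delta_q\theta$ in $L^\infty$ and $\nabla S_{q-1} v$ in $L^2$ (resp. $L^p$), using Bernstein to trade derivatives. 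That produces the factor $\|\nabla v\|_{L^2}\|\theta\|_{B_{\infty,2}^{s-1}}$ after multiplying by $2^{qs}$ and taking $\ell^2$ (resp. $\ell^\infty$) in $q$; the gain $2^{-q}$ from the commutator is exactly what converts $\|\theta\|_{B_{\infty,2}^{s-1}}$ into the needed regularity. For the term $T_{\mathcal{R}\theta}v - \mathcal{R}(T_\theta v)$ and the remainder $R(v,\theta)$, $\mathcal{R}$ acts on frequency-localized pieces so its $L^p$-boundedness (Proposition \ref{cor1}(1)) or the convolution-kernel representation applies directly, and these terms are estimated without needing a commutator gain — here one uses $\operatorname{div} v = 0$ to rewrite $T_\theta v$-type and remainder terms as derivatives of products so that the low-regularity budget closes. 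The low-frequency/large-scale interactions (where the kernel representation of $\mathcal{R}\Delta_q$ is not available, i.e. $q=-1$) are handled crudely by $L^2\times L^2\to L^2\hookrightarrow$ the target, which is the source of the harmless additive term $\|v\|_{L^2}\|\theta\|_{L^2}$.

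For part (2), the scheme is identical but now the regularity index is $s=0$ and the integrability is $(p,\infty)$ instead of $(2,2)$; since the $\ell^\infty$ summation is more forgiving than $\ell^2$, the $B_{\infty,\infty}^0$ norm of $\theta$ suffices on the paraproduct piece, and Hölder with exponents $p$ and $\infty$ distributes $\nabla v\in L^p$ and $\theta\in B_{\infty,\infty}^0$ correctly. One should be slightly careful that with the divergence written outside, $\|\partial_j[\mathcal{R},v^j]\theta\|_{B_{p,\infty}^0} \sim \|[\mathcal{R},v^j]\theta\|_{B_{p,\infty}^1}$, so effectively one proves the $B_{p,\infty}^1$ bound on $[\mathcal{R},v^j]\theta$ and the commutator gain $2^{-q}$ is what makes this possible with only one derivative on $v$.

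The main obstacle I expect is bookkeeping the three paraproduct pieces so that in every single one the derivative count balances: the commutator structure supplies exactly one extra negative power of $2^q$, and one must spend it wisely — on the term $[\mathcal{R},T_v]\theta$ it upgrades $\theta$ from $B^{s-1}$ to the scaling of the output, while on $T_\theta v$ and $R(v,\theta)$ there is no commutator gain, so there one is forced to use $\operatorname{div} v=0$ to integrate by parts and shift the derivative onto $\mathcal{R}$-localized factors. Getting all three to land in the same space with the claimed norms, and correctly isolating the non-scale-invariant low-frequency remainder into the $\|v\|_{L^2}\|\theta\|_{L^2}$ term, is the delicate part; the individual estimates are routine applications of Bernstein, Hölder, and Lemma \ref{commu}.
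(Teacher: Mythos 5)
Your plan is essentially the paper's proof: Bony's decomposition of the commutator into the two paraproduct pieces and the remainder, the representation of $\mathcal{R}$ on each dyadic annulus as convolution with $2^{dq}\psi(2^q\cdot)$ combined with Lemma \ref{commu} to extract the $2^{-q}$ commutator gain, Bernstein and the divergence structure for the remainder, and a crude $L^2\times L^2$ bound on the finitely many low-frequency blocks producing the additive term $\|v\|_{L^2}\|\theta\|_{L^2}$. The only (harmless) deviations are bookkeeping ones — the paper also applies the commutator lemma to the $T_\theta v$ piece rather than integrating by parts there, and keeps the gradient inside the blocks instead of proving a $B^1_{p,\infty}$ bound on $[\mathcal{R},v^j]\theta$ — so the two arguments are the same up to presentation.
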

 \begin{proof}[Proof of Theorem \ref{propcom}]
 
(1)  We  split the commutator into three parts,   according to  Bony's decomposition
\begin{eqnarray*}
[\mathcal{R},   v]\theta&=&\sum_{q\in\NN}[\mathcal{R},   S_{q-1}v]\Delta_q\theta+\sum_{q\in\NN}[\mathcal{R},   \Delta_qv]S_{q-1}\theta\\
&+&\sum_{q\geq-1} [\mathcal{R},   \Delta_qv]\widetilde{\Delta}_q\theta\\
&=& \sum_{q\in\NN}\mbox{I}_q+\sum_{q\in\NN}\mbox{II}_q+\sum_{q\geq-1}\mbox{III}_q\\
&=&\mbox{I}+\mbox{II}+\mbox{III}.
\end{eqnarray*}
$\bullet$  {\sl Estimation of $\mbox{ I}$}.    According to the point (2) of \mbox{Proposition \ref{cor1}} there  exists  $h\in\mathcal{S}$ whose
 Fourier transform   does not contain the origin such that
 $$
\mbox{I}_q(x)=h_q\star( S_{q-1}v\Delta_q\theta)-S_{q-1}v(h_q\star \Delta_q\theta),  
$$
 where $h_q(x)=2^{dq}h(2^qx)$.
 Applying Lemma \ref{commu} with $p=2$   we infer
  \begin{eqnarray}
  \nonumber
\|\mbox{I}_q\|_{L^2}&\lesssim &\|xh_q\|_{L^1}  \|\nabla S_{q-1} v\|_{L^2}\|\Delta_q\theta\|_{L^\infty}
\\
\label{x1}
&\lesssim& 2^{-q}\|\nabla v\|_{L^2}\|\Delta_q\theta\|_{L^\infty}.
\end{eqnarray}
In the last line we have used the fact that  $\|xh_q\|_{L^1}= 2^{-q}\|xh\|_{L^1}$.
Since  for every $q\in\NN$ the Fourier transform of $\hbox{I}_q$ is supported in a ring of size $2^q$ then 
$$\|\hbox{I}\|_{H^s}^2\simeq  \sum_{q}2^{2qs}\|\hbox{I}_q\|_{L^2}^2.
$$
Combined with \eqref{x1} this yields
\begin{eqnarray*}
\|\hbox{I}\|_{H^s}^2\lesssim\|\nabla v\|_{L^2}^2\|\theta\|_{B_{\infty,  2}^{s-1}}^2.
\end{eqnarray*}
$\bullet$  {\sl Estimation of $\mbox{ II}$}.  As before we can write
$$
\mbox{II}_q(x)=h_q\star( S_{q-1}\theta\Delta_q v)-S_{q-1}\theta(h_q\star \Delta_q v),   
$$ 
and again by Lemma \ref{commu} with $p=2$   we obtain
\begin{eqnarray*}
\|\hbox{II}_q\|_{L^2}&\lesssim &\|\nabla v\|_{L^2}   2^{-q} \|S_{q-1} \theta \|_{L^\infty}
\\
&\lesssim & \|\nabla v\|_{L^2}   2^{-q}\sum_{j\leq q-2} \|\Delta_{j} \theta \|_{L^\infty}.
\end{eqnarray*}
Thus,  
\begin{eqnarray*}
2^{qs}\|\hbox{II}_q\|_{L^2}\lesssim  \|\nabla v\|_{L^2} \big(  2^{(1-s)\cdot}\star 2^{(1-s)\cdot} \|\Delta_{\cdot} \theta \|_{L^\infty}\big)(q).
\end{eqnarray*}
where $\star$ is the discrete convolution in $\mathbb N\cup\{-1\}$. 

Here again the  Fourier transform of $\mbox{II}_{q}$ is   supported in a ring of size $2^q$ and by consequences $ \|\mbox{II}\|_{H^s}\simeq  \big\|2^{s\cdot}\|\hbox{I}_{\cdot}\|_{L^2}\big\|_{\ell^2}$.  Combined with \eqref{dsp} and discrete  Young inequalities (remember  $s<1$) this yields
\begin{eqnarray*}
 \|\mbox{II}\|_{H^s}  &   \lesssim_s & 
 \|\nabla v\|_{L^2}^2 \big\|2^{(s-1)\cdot}\|\Delta_{\cdot}\theta\|_{L^\infty}\big\|_{\ell^2}
  \\
 &\simeq& 
  \|\nabla v\|_{L^2} \|\theta\|_{B_{\infty,  2}^{s-1}}.
 \end{eqnarray*}

 $\bullet$  {\sl Estimation of $\mbox{ III}$}.  We distinguish two parts
 $$
{\rm III}=\sum_{q\geq 1 }[\mathcal R,  \Delta_qv]\widetilde\Delta_q\theta+\sum_{q\le 0}[\mathcal R,  \Delta_qv]\widetilde\Delta_q\theta:=J_1+J_2.
$$
$J_1$ contains   only  terms whose  Fourier transform are  localized  away from zero. For them,     we can use 
 Proposition \ref{cor1} and Lemma \ref{commu} as before. This gives
\begin{eqnarray*}
\|[\mathcal R,  \Delta_qv]\widetilde\Delta_q\theta\|_{L^2}\lesssim 2^{-q}\|\nabla v\|_{L^2}\|\widetilde\Delta_q\theta\|_{L^\infty}.
\end{eqnarray*}
Note that we have used the fact that for $q\geq 1$,   we have   $\|\mathcal{R}\widetilde\Delta_q\theta\|_{L^\infty}\lesssim \|\widetilde\Delta_q\theta\|_{L^\infty}$. 
Now we have
$$
2^{js}\|\Delta_j J_1\|_{L^2}\lesssim\|\nabla v\|_{L^2}\sum_{q\geq j-4}2^{(j-q)s} 2^{q(s-1)}\|\widetilde\Delta_q\theta\|_{L^\infty}
$$
Since $s>0$ then the convolution inequality leads to 
\begin{eqnarray*}
\|J_1\|_{H^s}&\lesssim&\|\nabla v\|_{L^2}\|\theta\|_{B_{\infty,  2}^{s-1}}.
\end{eqnarray*}

$J_2$ contains a finite number of terms with low frequencies and  it  can be  handled  without using the commutator structure. Indeed,   
from  Bernstein inequalities and the  $L^2$-continuity   of  the Riesz transform we  obtain for $q \leq 0$
\begin{eqnarray*}
\|[\mathcal R,  \Delta_qv]\Delta_q\theta\|_{L^2}&\lesssim &
\|\Delta_q v\|_{L^2}\big(\|\Delta_q\theta\|_{L^\infty}+\|\mathcal{R}\Delta_q\theta\|_{L^\infty}  \big)
\\
&\lesssim&\| v\|_{L^2}\big(\|\theta\|_{L^2}+\|\mathcal{R}\theta\|_{L^2}  \big)
\\
&\lesssim&\|v\|_{L^2}\|\theta\|_{L^2}.
\end{eqnarray*}
Thus we get for every $s\in\RR$
$$
\|J_2\|_{H^s}\lesssim\|v\|_{L^2}\|\theta\|_{L^2}.
$$

This ends the proof of (1).

\bigskip

(2) We use again Bony's decomposition to write 
\begin{eqnarray*}
[\mathcal{R},   v\cdot\nabla]\theta&=&\sum_{q\in\NN}[\mathcal{R},   S_{q-1}v\cdot\nabla]\Delta_q\theta+\sum_{q\in\NN}[\mathcal{R},   \Delta_qv\cdot\nabla]S_{q-1}\theta\\
&+&\sum_{q\geq-1} [\mathcal{R},   \Delta_qv\cdot\nabla]\widetilde{\Delta}_q\theta\\
&=&\mbox{I}+\mbox{II}+\mbox{III}.
\end{eqnarray*}
Similarly to \eqref{x1} we have
\begin{eqnarray*}
\big\|[\mathcal{R},   S_{q-1}v\cdot\nabla]\Delta_q\theta\big\|_{L^p}&\lesssim& 2^{-q}\|\nabla v\|_{L^p}\|\Delta_q\nabla\theta\|_{L^\infty}\\
&\lesssim&\|\nabla v\|_{L^p}\|\Delta_q\theta\|_{L^\infty}.
\end{eqnarray*}
Thus we get
$$
\|{\rm I}\|_{B_{p,  \infty}^0}\lesssim\|\nabla v\|_{L^p}\|\theta\|_{B_{\infty,  \infty}^0}.
$$
For the second term we have
\begin{eqnarray*}
\big\|[\mathcal{R},   \Delta_qv\cdot\nabla]S_{q-1}\theta\big\|_{L^p}&\lesssim& 2^{-q}\|\nabla \Delta_qv\|_{L^p}\|S_{q-1}\nabla\theta\|_{L^\infty}\\
&\lesssim&\|\nabla v\|_{L^p}\sum_{j\le q-2}2^{j-q}\|\Delta_q\theta\|_{L^\infty}.
\end{eqnarray*}
It follows  that
$$
\|{\rm II}\|_{B_{p,  \infty}^0}\lesssim\|\nabla v\|_{L^p}\|\theta\|_{B_{\infty,  \infty}^0}.
$$
To estimate the remainder term we use the embedding \mbox{$L^p\hookrightarrow B_{p,  \infty}^0,  $}
\begin{eqnarray*}
\|{\rm III}\|_{B_{p,  \infty}^0}\lesssim \sum_{q\le 1}\| [\mathcal{R},   \Delta_qv\cdot\nabla]\widetilde{\Delta}_q\theta\|_{L^p}+\big\|\sum_{q\geq2} {\rm div }[\mathcal{R},   \Delta_qv]\widetilde{\Delta}_q\theta\big\|_{B_{p,  \infty}^0}.
\end{eqnarray*}
For the first term of the RHS we use Bernstein inequalities ($p\geq2$)  and  the $L^2$ continuity of
 $ \mathcal{R}$ to get 
\begin{eqnarray*}
\sum_{q\le 1}\| [\mathcal{R},   \Delta_qv\cdot\nabla]\widetilde{\Delta}_q\theta\|_{L^p}&\lesssim& \|v\|_{L^2}\sum_{q\le 1}(\|\nabla\widetilde\Delta_q\theta\|_{L^\infty}+\|\mathcal{R}\nabla\widetilde\Delta_q\theta\|_{L^\infty})\\
&\lesssim& \|v\|_{L^2}\|\theta\|_{L^2}.
 \end{eqnarray*}
The second term is estimated as follows
 \begin{eqnarray*}
 \|\sum_{q\geq2} [\mathcal{R},   \Delta_qv\cdot\nabla]\widetilde{\Delta}_q\theta\|_{B_{p,  \infty}^0}&\lesssim&\|\nabla v\|_{L^p}\sup_j\sum_{q\geq j-4}2^{j-q}\|\widetilde\Delta_q\theta\|_{L^\infty}\\
 &\lesssim&\|\nabla v\|_{L^p}\|\theta\|_{B_{\infty,  \infty}^0}.
  \end{eqnarray*}
This ends the proof of Theorem \ref{propcom}.
\end{proof}

\section{Transport-Diffusion models}

\label{sectionTD}

This section contains  some estimates needed  in the proof of Theorem \ref{theo2}. We start with the following 
Besov space estimate for the  transport equation,    for the proof see for example \cite{ah}.
\begin{prop}\label{Besov-pro} Let  $v$  be a  smooth divergence-free vector field. Then,    every scalar solution $\psi $ of the equation
$$
\partial_t\psi+v\cdot\nabla \psi=f,     \quad \psi_{\mid t=0}= \psi^0,  
$$
satifies,   for every $p\in [1,  +\infty]$,   
$$
\|\psi(t)\|_{B_{p,  \infty}^{-1}}\le C \exp\big(C\int^t_0\|v(\tau)\|_{B_{\infty,  1}^1}d\tau\big)\Big(\|\psi^0\|_{B_{p,  \infty}^{-1}}+\int_0^t \|f(\tau)\|_{B_{p,  \infty}^{-1}}d\tau\Big).
$$

\end{prop}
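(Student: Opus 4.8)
The plan is the classical Littlewood--Paley / commutator scheme for transport equations; the only delicate point is that the regularity index $-1$ is borderline, which is exactly where the hypothesis $v\in B_{\infty,1}^1$ (and not merely $v$ Lipschitz) enters. \textbf{Step 1 (localization and $L^p$ estimate).} Applying $\Delta_q$ to the equation, $\Delta_q\psi$ solves
$$\partial_t\Delta_q\psi+v\cdot\nabla\Delta_q\psi=\Delta_q f+\mathcal{C}_q,\qquad \mathcal{C}_q:=[v\cdot\nabla,\Delta_q]\psi.$$
Since $v$ is divergence free, an $L^p$ estimate on this equation — obtained for $p<\infty$ by multiplying by $|\Delta_q\psi|^{p-2}\Delta_q\psi$ and integrating, and for $p=\infty$ by composing with the (measure preserving) flow of the smooth field $v$ — makes the convection term drop out and gives
$$\|\Delta_q\psi(t)\|_{L^p}\le\|\Delta_q\psi^0\|_{L^p}+\int_0^t\|\Delta_q f(\tau)\|_{L^p}\,d\tau+\int_0^t\|\mathcal{C}_q(\tau)\|_{L^p}\,d\tau.$$

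\textbf{Step 2 (the commutator estimate).} Everything hinges on
$$\sup_{q\ge-1}2^{-q}\big\|[v\cdot\nabla,\Delta_q]\psi\big\|_{L^p}\lesssim\|v\|_{B_{\infty,1}^1}\,\|\psi\|_{B_{p,\infty}^{-1}}.$$
To prove it I would first use $\operatorname{div}v=0$ to write $v\cdot\nabla g=\operatorname{div}(gv)$, so that $\mathcal{C}_q=\operatorname{div}w$ with $w^j=(\Delta_q\psi)v^j-\Delta_q(\psi v^j)$, and then expand both $\psi v^j$ and $(\Delta_q\psi)v^j$ by Bony's decomposition. This splits $w^j$ into three pieces: a genuine paraproduct commutator $[T_{v^j},\Delta_q]\psi=\sum_{|k-q|\le N_0}[\Delta_q,S_{k-1}v^j]\Delta_k\psi$, estimated by $2^{-q}\|\nabla v\|_{L^\infty}\sum_{|k-q|\le N_0}\|\Delta_k\psi\|_{L^p}$ exactly as term $\mathrm{I}$ in the proof of Theorem~\ref{propcom} via Lemma~\ref{commu} (with H\"older used in the other order); a ``reverse'' paraproduct piece $T_{\Delta_q\psi}v^j-\Delta_q T_\psi v^j$, where one uses $\operatorname{div}v=0$ a second time to kill the worst term after differentiation; and a remainder piece $R(\Delta_q\psi,v^j)-\Delta_q R(\psi,v^j)$. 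On each of them one uses $\|\nabla v\|_{L^\infty}\lesssim\|v\|_{B_{\infty,1}^1}$, the low-frequency bound $\|\Delta_k\psi\|_{L^p}\lesssim 2^k\|\psi\|_{B_{p,\infty}^{-1}}$, and Bernstein's inequality to absorb the outer $\operatorname{div}$.

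\textbf{Step 2, continued (why the endpoint works).} The point that makes the borderline index $s=-1$ go through is that the a priori divergent tails arising in the remainder term, of the form $\sum_{k\ge q-N_0}\|\Delta_k\psi\|_{L^p}\,\|\widetilde\Delta_k v\|_{L^\infty}$, are controlled, after inserting $\|\Delta_k\psi\|_{L^p}\lesssim 2^k\|\psi\|_{B_{p,\infty}^{-1}}$, by $\|\psi\|_{B_{p,\infty}^{-1}}\sum_{k\ge-1}2^k\|\widetilde\Delta_k v\|_{L^\infty}\simeq\|\psi\|_{B_{p,\infty}^{-1}}\|v\|_{B_{\infty,1}^1}$, which is finite precisely because of the $\ell^1$ summability built into $B_{\infty,1}^1$; for a merely Lipschitz $v$ (for which one only has $2^k\|\Delta_k v\|_{L^\infty}\lesssim\|\nabla v\|_{L^\infty}$ uniformly) these series would diverge. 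The same $\ell^1$ structure is what controls $\|v-S_{q+2}v\|_{L^\infty}\lesssim 2^{-q}\|v\|_{B_{\infty,1}^1}$ needed in the reverse paraproduct piece.

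\textbf{Step 3 (conclusion) and main obstacle.} Multiplying the inequality of Step~1 by $2^{-q}$, taking the supremum over $q\ge-1$, and inserting Step~2 yields
$$\|\psi(t)\|_{B_{p,\infty}^{-1}}\le\|\psi^0\|_{B_{p,\infty}^{-1}}+\int_0^t\|f(\tau)\|_{B_{p,\infty}^{-1}}\,d\tau+C\int_0^t\|v(\tau)\|_{B_{\infty,1}^1}\,\|\psi(\tau)\|_{B_{p,\infty}^{-1}}\,d\tau,$$
and the Gronwall lemma then gives the claimed exponential bound. The main obstacle is thus concentrated in Step~2: obtaining the commutator estimate \emph{uniformly in $q$} at the endpoint regularity $s=-1$, which is exactly the reason the $\ell^1$-summability packaged in the space $B_{\infty,1}^1$ is assumed; the rest of the argument (the frequency localization, the divergence-free $L^p$ estimate, and the final Gronwall step) is routine.
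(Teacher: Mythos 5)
Your argument is correct and is essentially the standard proof of this estimate: the paper itself does not prove Proposition \ref{Besov-pro} but refers to \cite{ah}, where the proof follows exactly this scheme (Littlewood--Paley localization, an endpoint commutator estimate in which the $\ell^1$ summability of $B_{\infty,1}^1$ replaces the Lipschitz bound at the borderline regularity $s=-1$, then Gronwall). You have correctly identified the commutator bound as the only delicate point and why $\|v\|_{B_{\infty,1}^1}$ rather than $\|\nabla v\|_{L^\infty}$ is needed there, so nothing essential is missing.
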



The second proposition  is dedicated  to  some logarithmic  and  $L^p$ estimates. 
\begin{prop}\label{thmlog}  Let $v$  be a  smooth divergence-free vector field,   $\kappa\in\RR_+$ and  $(p,  r)\in[1,  \infty]^2$.  Then  there exists $C>0$,   such that every scalar solution of 
\begin{equation}
\label{adiss}
\partial_t \psi +v\cdot\nabla \psi +\kappa|\DD| \psi =f,   \quad \psi_{\mid t=0}= \psi^0,  
\end{equation}
satisfies
$$
\| \psi \|_{\widetilde L^\infty_tB_{p,  r}^0}\leq C\Big( \|\psi^0\|_{B_{p,  r}^0}+\|f\|_{\widetilde L^1_tB_{p,  r}^0}\Big)\Big( 1+\int_0^t\|\nabla v(\tau)\|_{L^\infty}d\tau \Big),  
$$
and 
$$
\|\psi (t)\|_{L^p}\le\| \psi ^0\|_{L^p}+\int_0^t\|f(\tau)\|_{L^p}d\tau.
$$
\end{prop}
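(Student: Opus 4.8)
The plan is to localize equation \eqref{adiss} in frequency and treat the fractional dissipation as providing a damping term on each dyadic block. Applying $\Delta_q$ to \eqref{adiss} gives
\[
\partial_t \Delta_q\psi + v\cdot\nabla \Delta_q\psi + \kappa|\DD|\Delta_q\psi = \Delta_q f - [\Delta_q, v\cdot\nabla]\psi .
\]
For the $L^p$ estimate (the easy part), I would multiply by $|\Delta_q\psi|^{p-2}\Delta_q\psi$ and integrate; the transport term vanishes since $\textnormal{div}\,v=0$, and the dissipation term $\int |\DD|\Delta_q\psi \,|\Delta_q\psi|^{p-2}\Delta_q\psi\,dx$ is nonnegative by a positivity/Córdoba--Córdoba type inequality (or, more simply, one gets the bound on $\|\psi(t)\|_{L^p}$ directly from the transport-diffusion maximum principle: the semigroup $e^{-t\kappa|\DD|}$ is a contraction on $L^p$, and composing with the measure-preserving flow of $v$ preserves $L^p$ norms, so Duhamel gives $\|\psi(t)\|_{L^p}\le\|\psi^0\|_{L^p}+\int_0^t\|f\|_{L^p}$). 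This second inequality is the clean one and I would dispose of it first.

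For the $\widetilde L^\infty_t B^0_{p,r}$ estimate, the key quantitative input is a lower bound on the dissipation after frequency localization: there is a constant $c>0$ such that
\[
\int_{\RR^d} \big(|\DD|\Delta_q\psi\big)\,|\Delta_q\psi|^{p-2}\Delta_q\psi\,dx \;\ge\; c\,2^{q}\,\|\Delta_q\psi\|_{L^p}^p ,
\]
i.e. the Fourier multiplier $|\xi|$ behaves like $2^q$ on the support of $\varphi(2^{-q}\cdot)$, giving genuine exponential decay $e^{-c\kappa 2^q t}$ on the $q$-th block. Feeding this into the $L^p$ energy estimate for $\Delta_q\psi$ and using Gr\"onwall yields, for each $q$,
\[
\|\Delta_q\psi(t)\|_{L^p}\;\lesssim\; e^{-c\kappa 2^q t}\|\Delta_q\psi^0\|_{L^p}
+\int_0^t e^{-c\kappa 2^q(t-\tau)}\Big(\|\Delta_q f(\tau)\|_{L^p}+\|[\Delta_q,v\cdot\nabla]\psi(\tau)\|_{L^p}\Big)d\tau .
\]
Dropping the (favorable) exponential factors in front of the data and forcing terms gives the $\|\psi^0\|_{B^0_{p,r}}+\|f\|_{\widetilde L^1_tB^0_{p,r}}$ contribution directly after taking the $\ell^r$ norm in $q$. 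The commutator $[\Delta_q,v\cdot\nabla]\psi$ is where the logarithmic loss enters: by the standard commutator estimate (Bony decomposition, as used in \cite{che1}), one has $2^{0\cdot q}\|[\Delta_q,v\cdot\nabla]\psi\|_{L^p}$ bounded, up to a summable sequence, by $\|\nabla v\|_{L^\infty}\|\psi\|_{B^0_{p,r}}$ — but with a subtlety: the endpoint $B^0$ commutator estimate is not quite true with a clean constant, so one must be slightly careful and use the dissipation to absorb the borderline term, or equivalently run the argument in $\widetilde L^\infty_t$ and close a Gr\"onwall inequality of the form $X(t)\le C(\text{data}) + C\int_0^t\|\nabla v(\tau)\|_{L^\infty}X(\tau)\,d\tau$.

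I expect the main obstacle to be precisely this handling of the commutator term at the $B^0_{p,r}$ regularity level: this is an endpoint/borderline case where the naive commutator bound fails by a logarithm, and one has to exploit either the smoothing from $\kappa|\DD|$ on high frequencies or a more refined bookkeeping (treating the low--high, high--low and high--high paraproduct pieces separately and noting that only the diagonal piece is problematic, where the extra $2^q$ from the dissipation saves the day) to arrive at a linear-in-$\int\|\nabla v\|_{L^\infty}$ bound rather than an exponential one. Once that commutator is controlled, summing the Duhamel bound in $\ell^r_q$ and applying Gr\"onwall (or directly the linear structure, since the right-hand side is linear in $\psi$) finishes the proof. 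The case $\kappa=0$ is included since the exponential factors are then simply $1$ and the argument degenerates to the classical transport estimate in $\widetilde L^\infty_t B^0_{p,r}$.
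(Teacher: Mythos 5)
Your $L^p$ bound is fine (that part is exactly the C\'ordoba--C\'ordoba positivity argument the paper cites), but the core of your plan for the Besov estimate has a genuine gap: the Duhamel--commutator--Gr\"onwall scheme you describe cannot produce the stated factor $\bigl(1+\int_0^t\|\nabla v(\tau)\|_{L^\infty}d\tau\bigr)$. Closing an inequality of the form $X(t)\le C(\text{data})+C\int_0^t\|\nabla v(\tau)\|_{L^\infty}X(\tau)\,d\tau$ gives $X(t)\lesssim C(\text{data})\,e^{CV(t)}$ with $V(t)=\int_0^t\|\nabla v\|_{L^\infty}$, i.e.\ exponential rather than linear dependence on $V(t)$. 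The linear dependence is precisely the point of the proposition (it is what allows Proposition \ref{prons1} to close a linear Gr\"onwall inequality for $\int_0^t\|\omega\|_{B^0_{\infty,1}}$; with an exponential bound the paper's main a priori estimate would not go through), and it cannot be recovered from the dissipation: the constant must be uniform in $\kappa\in\RR_+$, which includes $\kappa=0$, so there is no damping $e^{-c\kappa 2^q t}$ to absorb the borderline commutator contribution. Incidentally, your diagnosis of the obstruction is also slightly off: the commutator bound $\bigl\|2^{qs}\|[\Delta_q,v\cdot\nabla]\psi\|_{L^p}\bigr\|_{\ell^r}\lesssim\|\nabla v\|_{L^\infty}\|\psi\|_{B^s_{p,r}}$ is perfectly valid at $s=0$ (the endpoints are $s=\pm1$); the obstruction is the exponential output of Gr\"onwall, not a failure of the commutator estimate.

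The paper does not prove the proposition directly; it invokes Vishik's method (\cite{vis}), as extended in \cite{H-K2} and adapted to $|\DD|$ in \cite{HKR1}. That argument is structurally different from yours: one decomposes the data and force into dyadic blocks, lets $\psi_j$ solve the transport-diffusion equation with datum $\Delta_j\psi^0$ (and the corresponding piece of $f$), and combines two ingredients --- (i) the $L^p$ maximum principle of the second inequality, which gives a bound on $\|\psi_j(t)\|_{L^p}$ with \emph{no} growth in $V(t)$, and (ii) a quantitative quasi-localization estimate of the form $\|\Delta_q\psi_j(t)\|_{L^p}\lesssim 2^{-|q-j|}e^{CV(t)}\|\Delta_j\psi^0\|_{L^p}$, which does grow exponentially but decays off the diagonal. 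Summing over $j$, one uses (i) for $|q-j|\le N$ and (ii) for $|q-j|>N$, and optimizes $N\approx CV(t)$; the $O(N)$ diagonal terms are what produce the linear factor $1+V(t)$. If you want to repair your proposal, you need this (or an equivalent) mechanism; the straightforward frequency-localized energy estimate you outline only yields the weaker exponential version of the inequality.
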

The first result was first proved  by Vishik in \cite{vis} for the case $\kappa=0$ by using
 the special structure of the transport equation. In \cite{H-K2} the first two authors have 
  generalized Vishik result for a transport-diffusion equation where the dissipation
   term takes the form $-\kappa\Delta \psi$.  The method described in \cite{H-K2} 
   can be easily adapted to the model \eqref{adiss},    for more details  
   the complete proof can be found  in \cite{HKR1}. The $L^p$ estimates are proved  in \cite{cc}.

 In the proof of the uniqueness part of the main theorem we shall  also need   
 some estimates for the linearized velocity equation.

\begin{prop}
\label{thm99}
 Let $v$ a  smooth divergence free vector field,   $s\in]-1,  1[$  and $ \rho\in[1,  \infty].$ 
 Let $u$ be a smooth solution of the system
\begin{equation*} 
{\rm (LB)}\qquad\qquad\partial_{t} u+v\cdot\nabla u+\vert\textnormal{D}\vert u+\nabla p=f,  \qquad {\rm div } u=0.
\end{equation*} 
Then,   we have for every $t\in\RR_+$ 
$$
\|u\|_{ L^\infty_t{B}_{2,  \infty}^{s}}\leq Ce^{CV(t)}\Big( \|u^0\|_{{B}_{2,  \infty}^s}+\|f\|_{\widetilde L^\rho_tB_{2,  \infty}^{s-1+\frac1\rho}}(1+t^{1-\frac1\rho}\big)  \Big),  
$$
where $V(t):=\int_0^t\|\nabla v(\tau)\|_{L^\infty}d\tau.$
\end{prop}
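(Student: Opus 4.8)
The plan is to run a frequency-localized energy estimate for (LB), using the smoothing of $|\DD|$ on the blocks $\Delta_q$ with $q\ge 0$ and handling the block $\Delta_{-1}u$ — on which $|\DD|$ gives no spectral gain since the support of $\chi$ meets the origin — by a plain integration in time. First I would apply $\Delta_q$ to the equation and split $\Delta_q(v\cdot\nabla u)=v\cdot\nabla\Delta_q u+[\Delta_q,v\cdot\nabla]u$. Taking the $L^2$ scalar product with $\Delta_q u$, the convection term $\int v\cdot\nabla\Delta_q u\cdot\Delta_q u\,dx$ vanishes because $\textnormal{div}\,v=0$, and the pressure term $\int\nabla\Delta_q p\cdot\Delta_q u\,dx=-\int\Delta_q p\,\textnormal{div}\,\Delta_q u\,dx$ vanishes because $\textnormal{div}\,\Delta_q u=0$, so no use of the Leray projector is needed. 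For $q\ge 0$ the spectral localization gives, by Plancherel, $\int|\DD|\Delta_q u\cdot\Delta_q u\,dx\ge c\,2^q\|\Delta_q u\|_{L^2}^2$, whence
$$\frac{d}{dt}\|\Delta_q u\|_{L^2}+c\,2^q\|\Delta_q u\|_{L^2}\le\|[\Delta_q,v\cdot\nabla]u\|_{L^2}+\|\Delta_q f\|_{L^2},$$
while for $q=-1$ one simply discards the nonnegative dissipation term and obtains the same inequality without the $c\,2^q$ factor.

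Integrating in time (Duhamel) gives, for $q\ge 0$,
$$\|\Delta_q u(t)\|_{L^2}\le e^{-ct2^q}\|\Delta_q u^0\|_{L^2}+\int_0^t e^{-c(t-\tau)2^q}\big(\|[\Delta_q,v\cdot\nabla]u(\tau)\|_{L^2}+\|\Delta_q f(\tau)\|_{L^2}\big)\,d\tau,$$
and the same with $e^{-c(t-\tau)2^q}$ replaced by $1$ when $q=-1$. For the forcing I would use H\"older in time: for $q\ge 0$, $\int_0^t e^{-c(t-\tau)2^q}\|\Delta_q f(\tau)\|_{L^2}\,d\tau\lesssim(c\rho'2^q)^{-1/\rho'}\|\Delta_q f\|_{L^\rho_tL^2}\lesssim 2^{-q(1-\frac1\rho)}\|\Delta_q f\|_{L^\rho_tL^2}$, so $2^{qs}$ times this is $\le 2^{q(s-1+\frac1\rho)}\|\Delta_q f\|_{L^\rho_tL^2}\le\|f\|_{\widetilde L^\rho_tB_{2,\infty}^{s-1+\frac1\rho}}$; for $q=-1$, $\int_0^t\|\Delta_{-1}f(\tau)\|_{L^2}\,d\tau\le t^{1-\frac1\rho}\|\Delta_{-1}f\|_{L^\rho_tL^2}\lesssim t^{1-\frac1\rho}\|f\|_{\widetilde L^\rho_tB_{2,\infty}^{s-1+\frac1\rho}}$. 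Putting the two ranges of $q$ together produces precisely the factor $(1+t^{1-\frac1\rho})$ in front of the forcing norm.

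For the commutator I would invoke the classical commutator estimate for the transport operator: for $s\in\,]-1,1[$ and a divergence-free vector field $v$ with $\nabla v\in L^\infty$,
$$2^{qs}\|[\Delta_q,v\cdot\nabla]u(\tau)\|_{L^2}\le C\,c_q(\tau)\,\|\nabla v(\tau)\|_{L^\infty}\|u(\tau)\|_{B_{2,\infty}^s},\qquad\sup_q c_q(\tau)\le 1.$$
Multiplying the Duhamel inequalities by $2^{qs}$, bounding $e^{-c(t-\tau)2^q}\le 1$ in the commutator integral, taking the supremum over $q$ and then over $\tau\le t$, and setting $U(t):=\|u\|_{L^\infty_tB_{2,\infty}^s}$, one arrives at
$$U(t)\le C\|u^0\|_{B_{2,\infty}^s}+C\|f\|_{\widetilde L^\rho_tB_{2,\infty}^{s-1+\frac1\rho}}\big(1+t^{1-\frac1\rho}\big)+C\int_0^t\|\nabla v(\tau)\|_{L^\infty}\,U(\tau)\,d\tau,$$
and the Gronwall lemma yields the claimed bound with the factor $e^{CV(t)}$. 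All manipulations are legitimate since $u$ is assumed smooth, so the estimate is obtained \ap. I expect the commutator estimate to be the main point: it is exactly here that the restriction $s\in\,]-1,1[$ is forced — outside this range $[\Delta_q,v\cdot\nabla]u$ cannot be absorbed using only $\|\nabla v\|_{L^\infty}\|u\|_{B_{2,\infty}^s}$ — and its proof rests on a careful Bony decomposition of $v\cdot\nabla u$ together with the divergence-free condition, in the same spirit as the commutator analysis of Section \ref{sectionRiesz}.
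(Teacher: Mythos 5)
Your proposal is correct and follows essentially the same route as the paper: a frequency-localized $L^2$ energy estimate with the spectral lower bound $\int(|\DD|\Delta_q u)\Delta_q u\,dx\gtrsim 2^q\|\Delta_q u\|_{L^2}^2$ for $q\ge 0$, Duhamel plus H\"older in time for the forcing, the classical commutator estimate valid for $s\in\,]-1,1[$, a separate dissipation-free treatment of the block $\Delta_{-1}u$ producing the $t^{1-\frac1\rho}$ factor, and Gronwall. No gaps.
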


\begin{proof}[Proof of Proposition \ref{thm99}]
For $q\in \NN$ we apply the operator $\Delta_q$  to (LB) 
$$
\partial_t u_q+v\cdot\nabla u_q+\vert\textnormal{D}\vert u_q+\nabla p_q =-[\Delta_q,   v\cdot\nabla]u+f_q.
$$
Taking the $L^2$ inner product of this equation with $u_q$ we get due to the incompressibility of $v$ and $u_q$
$$
\frac12\frac{d}{dt}\|u_q\|_{L^2}^2+\int_{\RR^2}(\vert\textnormal{D}\vert u_q ) u_q dx\leq \|u_q\|_{L^2}\Big(\|[\Delta_q,   v\cdot\nabla]u\|_{L^2}+\|f_q\|_{L^2}\Big).
$$
From Parseval identity we get
$$
C2^q\|u_q\|_{L^2}^2\le\int_{\RR^2}(\vert\textnormal{D}\vert u_q ) u_q dx.
$$
Thus we get
$$
\frac{d}{dt}\|u_q(t)\|_{L^2}+c2^q\|u_q(t)\|_{L^2}\leq \|[\Delta_q,   v\cdot\nabla]u(t)\|_{L^2}+\|f_q(t)\|_{L^2}.
$$
Integrating in time this differential inequality we obtain
$$
\|u_q(t)\|_{L^2}\lesssim e^{-ct2^q}\|u_q^0\|_{L^2}+\int_0^te^{-c2^q(t-\tau)}\Big(\|[\Delta_q,   v\cdot\nabla]u(\tau)\|_{L^2}+\|f_q(\tau)\|_{L^2}\Big)d\tau
$$
Using H\"{o}lder inequalities we get
$$
\|u_q\|_{L^\infty_tL^2}\lesssim \|u_q^0\|_{L^2}+\int_0^t\|[\Delta_q,   v\cdot\nabla]u(\tau)\|_{L^2}d\tau+2^{q(-1+\frac1\rho)}\|f_q(\tau)\|_{L^\rho_tL^2}
$$
Multiplying by $2^{qs}$ and taking the supremum over $q\in\NN$ we find
\begin{equation}
\label{rrr}
\sup_{q\in\NN} 2^{qs}\|u_q\|_{L^\infty_tL^2}\lesssim \|u^0\|_{B_{2,  \infty}^s}+\int_0^t\sup_{q\in \NN}2^{qs}\|[\Delta_q,   v\cdot\nabla]u(\tau)\|_{L^2}d\tau+\|f\|_{\widetilde L^\rho_tB_{2,  \infty}^{s-1+\frac1\rho}}.
\end{equation}
Let us recall the following  classical commutator estimate (see \cite{che1} for instance)
$$
\sup_{q\geq-1}2^{qs}\|[\Delta_q,   v\cdot\nabla]u(\tau)\|_{L^2}\lesssim_s\|\nabla v\|_{L^\infty}\|u\|_{B_{2,  \infty}^s},  \qquad \forall s\in(-1,  1).
$$
Combined with \eqref{rrr} this yields
$$
\sup_{q\in\NN} 2^{qs}\|u_q\|_{L^\infty_tL^2}\lesssim \|u^0\|_{B_{2,  \infty}^s}+\int_0^t\|\nabla v(\tau)\|_{L^\infty}\|u(\tau)\|_{B_{2,  \infty}^s}d\tau+\|f\|_{\widetilde L^\rho_tB_{2,  \infty}^{s-1+\frac1\rho}}.
$$
For the low frequency block,    we have from the energy estimate of the localized equation
\begin{eqnarray*}
\frac{d}{dt}\|\Delta_{-1}u(t)\|_{L^2}&\leq &\|[\Delta_{-1},   v\cdot\nabla]u(t)\|_{L^2}+\|\Delta_{-1}f(t)\|_{L^2}\\
&\lesssim&\|\nabla v(t)\|_{L^\infty}\|u(t)\|_{B_{2,  \infty}^s}+\| \Delta_{-1}f(t)\|_{L^2}
\end{eqnarray*}
It follows
\begin{eqnarray*}
\|\Delta_{-1}u(t)\|_{L^2}&\leq &\|\Delta_{-1}u^0\|_{L^2}+\int_0^t\|\nabla v(\tau)\|_{L^\infty}\|u(\tau)\|_{B_{2,  \infty}^s}d\tau+\|\Delta_{-1}f\|_{L^1_tL^2}\\
&\lesssim&\|u^0\|_{B_{2,  \infty}^s}+\int_0^t\|\nabla v(t)\|_{L^\infty}\|u(\tau)\|_{B_{2,  \infty}^s}d\tau+t^{1-\frac1\rho}\| f\|_{\widetilde L^\rho_t B_{2,  \infty}^{s-1+\frac1\rho}}.
\end{eqnarray*}
The outcome is 
$$
\|u(t)\|_{B_{2,  \infty}^s}\lesssim\|u^0\|_{B_{2,  \infty}^s}+\int_0^t\|\nabla v(t)\|_{L^\infty}\|u(\tau)\|_{B_{2,  \infty}^s}d\tau+(1+t^{1-\frac1\rho})\| f\|_{\widetilde L^\rho_t B_{2,  \infty}^{s-1+\frac1\rho}}.
$$
A  Gronwall  inequality gives the claimed result.
\end{proof}

The proof of the next proposition  can be done in a similar way as Theorem 1.2 \mbox{in \cite{ab-Hm}.}
\begin{prop}\label{smooth5}
Let $v$ be a smooth divergence-free vector field and $ \psi  $ be a smooth solution of the equation
$$
\partial_t \psi +v\cdot\nabla \psi +|\DD| \psi =f,  \qquad \psi_{|t=0}=\psi^0.
$$
Then,   for every  $s\in]-1;1[ $ and $  (\rho,   p,  r)\in[1,  \infty]^3$  there exists $C>0$ such that 
 $$
\|\psi\|_{\widetilde L^\infty_tB_{p,  r}^s}\le Ce^{CV(t)}\Big(\|\psi^0\|_{B_{p,  r}^s}+(1+t^{1-\frac1\rho})\| f\|_{\widetilde L^\rho_t B_{p,  r}^{s-1+\frac1\rho}}
  \Big),  \qquad \forall t\in\RR_+,  
$$
where $V(t)=\|\nabla v\|_{L^1_t L^\infty}.$
\end{prop}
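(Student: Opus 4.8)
The plan is to mimic the frequency-localized argument used in the proof of Proposition \ref{thm99}, but now keeping the full Besov norm $\widetilde L^\infty_t B^s_{p,r}$ and working with $L^p$ rather than $L^2$ for the localized pieces. First I would apply $\Delta_q$ to the equation to get $\partial_t\psi_q + v\cdot\nabla\psi_q + |\DD|\psi_q = -[\Delta_q,v\cdot\nabla]\psi + f_q$. The key ingredient replacing the $L^2$ Parseval computation is a generalized Bernstein/positivity estimate for the fractional diffusion on dyadic blocks: for $q\in\NN$ one has a lower bound of the form $\langle |\DD|\psi_q, |\psi_q|^{p-2}\psi_q\rangle \gtrsim 2^q\|\psi_q\|_{L^p}^p$, which yields the pointwise-in-time differential inequality $\frac{d}{dt}\|\psi_q(t)\|_{L^p} + c2^q\|\psi_q(t)\|_{L^p} \le \|[\Delta_q,v\cdot\nabla]\psi(t)\|_{L^p} + \|f_q(t)\|_{L^p}$. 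This is exactly the $L^p$ analogue of the step in Proposition \ref{thm99}, and it is where the restriction to this type of dissipation is used; I would invoke it as known (it is the $L^p$ maximal-regularity-type smoothing for $e^{-t|\DD|}$ on a dyadic block, and is precisely what Theorem 1.2 of \cite{ab-Hm} is built on).

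Next I would integrate the differential inequality in time via Duhamel, giving
$$
\|\psi_q(t)\|_{L^p}\lesssim e^{-ct2^q}\|\psi_q^0\|_{L^p}+\int_0^t e^{-c2^q(t-\tau)}\|[\Delta_q,v\cdot\nabla]\psi(\tau)\|_{L^p}\,d\tau+\int_0^t e^{-c2^q(t-\tau)}\|f_q(\tau)\|_{L^p}\,d\tau.
$$
For the forcing term I would apply Young's inequality in time to the convolution against $e^{-c2^q\cdot}$, whose $L^{\rho'}$-norm over $\RR_+$ is $\lesssim 2^{-q(1-1/\rho)}$; this produces the factor $2^{q(s-1+1/\rho)}\|f_q\|_{L^\rho_t L^p}$, and after multiplying by $2^{qs}$ and taking $\ell^r$ in $q$ one gets the $\widetilde L^\rho_t B^{s-1+1/\rho}_{p,r}$ norm of $f$. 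For the commutator term I would bound $\int_0^t e^{-c2^q(t-\tau)}\|[\Delta_q,v\cdot\nabla]\psi(\tau)\|_{L^p}\,d\tau \le \int_0^t \|[\Delta_q,v\cdot\nabla]\psi(\tau)\|_{L^p}\,d\tau$ and use the standard commutator estimate $2^{qs}\|[\Delta_q,v\cdot\nabla]\psi\|_{L^p}\lesssim_s c_q\|\nabla v\|_{L^\infty}\|\psi\|_{B^s_{p,r}}$ with $(c_q)\in\ell^r$, valid for $s\in(-1,1)$ — this is the same estimate cited in the proof of Proposition \ref{thm99}, now in its $L^p$ form.

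Then I would multiply the Duhamel bound by $2^{qs}$, take the $\ell^r(\NN)$ norm, and treat the low-frequency block $\Delta_{-1}\psi$ separately by a plain $L^p$ energy estimate on its localized equation (no smoothing needed there), exactly as in Proposition \ref{thm99}; this contributes the $t^{1-1/\rho}$ factor via Hölder in time on $\|\Delta_{-1}f\|_{L^\rho_tL^p}$, and together with the high-frequency contribution gives the stated $(1+t^{1-1/\rho})$. Collecting everything yields
$$
\|\psi\|_{\widetilde L^\infty_t B^s_{p,r}}\lesssim \|\psi^0\|_{B^s_{p,r}} + (1+t^{1-1/\rho})\|f\|_{\widetilde L^\rho_t B^{s-1+1/\rho}_{p,r}} + \int_0^t \|\nabla v(\tau)\|_{L^\infty}\|\psi(\tau)\|_{B^s_{p,r}}\,d\tau,
$$
and since $\|\psi(\tau)\|_{B^s_{p,r}}\le \|\psi\|_{\widetilde L^\infty_\tau B^s_{p,r}}$, an application of Gronwall's lemma in the variable $t$ produces the factor $e^{CV(t)}$ with $V(t)=\|\nabla v\|_{L^1_tL^\infty}$, which is the claim. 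The main obstacle, and the only genuinely non-routine point, is the dyadic-block positivity estimate for $|\DD|$ in $L^p$; everything else is bookkeeping with Duhamel, Young, and the classical commutator bound. Since that ingredient is exactly what underlies \cite{ab-Hm}, I would simply refer to that paper rather than reprove it, which is why the proposition is stated as a direct analogue of Theorem 1.2 there.
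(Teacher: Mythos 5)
Your Eulerian strategy --- localize in frequency, kill the convection term by the $L^p$ energy method, use a dyadic lower bound for $|\DD|$, then Duhamel, Young's inequality in time and the classical commutator estimate, finishing with Gronwall --- is sound, and it is in fact exactly the mechanism the paper itself uses in the proof of Proposition \ref{gggg}. For $p\in[2,\infty)$ your argument goes through, and the bookkeeping (the $2^{-q(1-\frac1\rho)}$ gain on the forcing term, the separate treatment of $\Delta_{-1}\psi$ producing the $t^{1-\frac1\rho}$ factor, the Gronwall step on $\|\psi\|_{\widetilde L^\infty_t B^s_{p,r}}$) is correct. Since the paper gives no proof of Proposition \ref{smooth5} and simply defers to Theorem 1.2 of \cite{ab-Hm}, your write-up is a legitimate self-contained alternative on that range of $p$.

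The gap is at the endpoints of the range of $p$. The proposition is claimed for all $p\in[1,\infty]$, and the one ingredient you ``invoke as known'' --- the positivity estimate $\int(|\DD|\psi_q)\,|\psi_q|^{p-2}\psi_q\,dx\gtrsim 2^q\|\psi_q\|_{L^p}^p$ --- is the generalized Bernstein inequality of \cite{cmz}, which is available only for $p\in[2,\infty)$ (and is stated in that form in the paper). At $p=\infty$ the multiplier $|\psi_q|^{p-2}\psi_q$ does not even make sense, and $p=\infty$ is precisely the case of interest for Lipschitz control of the velocity; the range $p\in[1,2)$ is likewise not covered. Your attribution is also off: Theorem 1.2 of \cite{ab-Hm} is not built on this dyadic positivity estimate. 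It is proved by the Lagrangian device of \cite{H-K2}: one conjugates the localized equation by the flow of $v$ so that the convection term disappears identically, uses the kernel bound $\|e^{-t|\DD|}\Delta_q\|_{L^p\to L^p}\lesssim e^{-ct2^q}$, which is a convolution estimate valid for every $p\in[1,\infty]$, and then controls the commutator between the localized diffusion kernel and composition by the flow, the error being absorbed through $V(t)$. That is what makes the statement uniform in $p$. So you should either restrict your claim to $p\in[2,\infty)$ or replace the energy step by the Lagrangian argument for the remaining values of $p$.
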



\section{Weak solutions}

\label{sectionbouss}

Throughout the coming sections we use the notation $\Phi_k$ to denote any function
of the form 
$$
\Phi_k(t)=  C_{0}\underbrace{ \exp(...\exp  }_{k\,  times}(C_0t)...),  
$$
where $C_{0}$ depends on the involved norms of the initial data and its value may vary from line to line up to some absolute constants. 
We will make an intensive  use (without mentionning it) of  the following trivial facts
$$
\int_0^t\Phi_k(\tau)d\tau\leq \Phi_k(t)\qquad{\rm and}\qquad \exp({\int_0^t\Phi_k(\tau)d\tau})\leq \Phi_{k+1}(t).
$$
In this section we shall establish a   first set of  $\ap$ estimates and   discuss some results about weak solutions
which are easy consequences. These $\ap$ estimates are 
   also needed in order to construct the 
 global strong solutions as  stated in \mbox{Theorem \ref{theo2}.}

The first result  is concerned with weak solutions in  the energy space.
\begin{prop}\label{es-energ}
Let $(v^0,  \theta^0)\in L^2\times L^2,$ then there exists  a  global weak  solution of \eqref{ns} in the space $L^\infty_{\textnormal{loc}}(\RR_+; L^2)\cap L^2_{\textnormal{loc}}(\RR_+; \dot{H}^{\frac12})\times L^\infty(\RR_+;L^2)$  such that 
\begin{eqnarray*}
\|v(t)\|_{L^2}^2+\int_0^t\|v(\tau)\|_{\dot{H}^{\frac12}}^2d\tau&\le& C_0(1+t^2)\\
\|\theta(t)\|_{L^2}&\le&\|\theta^0\|_{L^2}.
\end{eqnarray*}
 If in addition,     $\theta^0 \in L^p$   for some  $p\in[1,  \infty]$,   then there is a weak solution which satisfies
  also  
 $$
 \|\theta(t)\|_{L^p}\le\|\theta^0\|_{L^p}.
 $$
\end{prop}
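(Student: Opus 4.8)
The plan is to construct the weak solution by a standard compactness argument applied to a suitable regularization of \eqref{ns}, so the heart of the matter is to derive the two displayed \emph{a priori} estimates which are uniform in the regularization parameter and then pass to the limit. First I would set up an approximate system, e.g.\ by Friedrichs mollification of the nonlinear terms (replace $v\cdot\nabla v$ by $J_n(J_nv\cdot\nabla J_nv)$, and likewise in the temperature equation, or add a hyperviscosity $\varepsilon\Delta$ to both equations), so that for each fixed $n$ (resp.\ $\varepsilon$) one has a global smooth solution $(v_n,\theta_n)$ by standard ODE/parabolic theory; the divergence-free condition is preserved by the Leray projector. All the estimates below are performed on these smooth solutions.

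For the temperature estimates, since $\theta_n$ solves a transport equation by a divergence-free field, the $L^p$ norm is conserved for every $p\in[1,\infty]$ — this is exactly the second displayed estimate of Proposition \ref{thmlog} with $\kappa=0$, $f=0$ — giving $\|\theta_n(t)\|_{L^p}\le\|\theta^0\|_{L^p}$, and in particular the $L^2$ bound. For the velocity, I would take the $L^2$ inner product of the $v$-equation with $v_n$; the convection term drops by incompressibility, the pressure term drops since $\mathrm{div}\,v_n=0$, the dissipative term contributes $\||\DD|^{1/2}v_n\|_{L^2}^2=\|v_n\|_{\dot H^{1/2}}^2$ by Parseval, and the forcing gives $\int \theta_n e_2\cdot v_n\,dx\le\|\theta_n\|_{L^2}\|v_n\|_{L^2}\le\|\theta^0\|_{L^2}\|v_n\|_{L^2}$. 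This yields
$$
\tfrac12\tfrac{d}{dt}\|v_n(t)\|_{L^2}^2+\|v_n(t)\|_{\dot H^{1/2}}^2\le\|\theta^0\|_{L^2}\|v_n(t)\|_{L^2},
$$
and integrating (using $\|v_n(t)\|_{L^2}\le\|v^0\|_{L^2}+\|\theta^0\|_{L^2}\,t$, which follows from the differential inequality $\frac{d}{dt}\|v_n\|_{L^2}\le\|\theta^0\|_{L^2}$ after dividing by $\|v_n\|_{L^2}$) gives the bound $\|v_n(t)\|_{L^2}^2+\int_0^t\|v_n(\tau)\|_{\dot H^{1/2}}^2\,d\tau\le C_0(1+t^2)$ with $C_0$ depending only on $\|v^0\|_{L^2}$ and $\|\theta^0\|_{L^2}$.

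It remains to pass to the limit. From the uniform bounds, $(v_n)$ is bounded in $L^\infty_{\mathrm{loc}}(\RR_+;L^2)\cap L^2_{\mathrm{loc}}(\RR_+;\dot H^{1/2})$ and $(\theta_n)$ in $L^\infty(\RR_+;L^2\cap L^p)$; using the equations one gets uniform bounds on $\partial_t v_n$ and $\partial_t\theta_n$ in some negative-index space, so by Aubin--Lions (with local-in-space compactness of the embedding $H^{1/2}_{loc}\hookrightarrow L^2_{loc}$) one extracts a subsequence with $v_n\to v$ strongly in $L^2_{loc}(\RR_+\times\RR^2)$ and $\theta_n\rightharpoonup\theta$ weakly-$*$; strong convergence of $v_n$ suffices to pass to the limit in the quadratic terms $v_n\cdot\nabla v_n$ and $v_n\cdot\nabla\theta_n$ (the latter after integration by parts, writing it as $\mathrm{div}(v_n\theta_n)$ and using weak convergence of $\theta_n$ against the strong convergence of $v_n$ in $L^2_{loc}$). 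The a priori bounds are weakly lower semicontinuous, so the limit $(v,\theta)$ satisfies the stated inequalities; the $L^p$ bound on $\theta$ passes to the limit by weak-$*$ lower semicontinuity of the $L^p$ norm when $p>1$, and for $p=1$ or $p=\infty$ one argues by interpolation or directly. The main obstacle is the usual one for weak solutions — ensuring enough compactness to pass to the limit in the nonlinear terms despite the weak ($\dot H^{1/2}$, i.e.\ only critical) regularity of $v$ — but here it is handled routinely by Aubin--Lions since we only claim existence, not uniqueness.
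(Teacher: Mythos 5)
Your proposal is correct and follows essentially the same route as the paper: the identical $L^2$ energy estimate for $v$ (dissipation giving the $\dot H^{1/2}$ norm, forcing bounded by $\|\theta^0\|_{L^2}\|v\|_{L^2}$, then the linear-in-time bound on $\|v(t)\|_{L^2}$ inserted back), the $L^p$ conservation for $\theta$ from incompressibility, and a Friedrichs-type regularization with compactness to construct the solution. The only difference is that you spell out the Aubin--Lions limit passage, which the paper omits by referring to the classical Friedrichs method.
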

\begin{proof}[Proof of Proposition \ref{es-energ}]
The  estimate  of $\theta$ in $L^p$ is a consequence  of the incompressibility of the flow.  The $L^2$ energy estimate for the velocity  can be obtained by taking the  $L^2$-inner product of the
  velocity equation in the  Boussinesq system \mbox{with $v$,  }
  \begin{eqnarray*}
  \frac12\frac{d}{dt}\|v(t)\|_{L^2}^2+\int_0^t\|v(\tau)\|_{\dot{H}^{\frac12}}^2d\tau&\le&\|v(t)\|_{L^2}\|\theta(t)\|_{L^2}\\
 &\leq& \|v(t)\|_{L^2}\|\theta^0\|_{L^2}.
  \end{eqnarray*}
  Thus  we obtain 
\begin{eqnarray*}
\|v(t)\|_{L^2}&\le&\|v^0\|_{L^2}+\int_0^t\|\theta(\tau)\|_{L^2}d\tau\\
&\le&\|v^0\|_{L^2}+\|\theta^0\|_{L^2} t.
\end{eqnarray*}
Inserting this inequality into the previous one leads to the desired estimate. Now to construct global solution we can proceed in a classical way using  the Friedrichs method. We omit here the details and for complete description of this method  we refer for example  to \cite{dp1}.
\end{proof}
\begin{Rema}
Due to the weak regularity of the velocity,   the uniqueness problem for these weak solutions seems
 an interesting widely open problem. 
\end{Rema}
Now we aim at constructing global weak solutions for more regular initial data near the scaling space $\dot{H}^1\times L^2$ described in the introduction of this paper.
\begin{prop}
\label{prop500}
Let $(v^0,  \theta^0)\in H^1\times L^2\cap L^r,  $ with $r\in]4,  \infty].$ Then,  
 there exists a global weak solution  $(v,  \theta)$ for  the system \eqref{ns}  such that
$$
\|\omega(t)\|_{L^2}^2+\int_0^t\|(\omega-\mathcal{R}\theta)(\tau)\|_{\dot{H}^{\frac12}}^2d\tau \leq \Phi_1(t),  
$$
where $\omega= \mbox{curl }v$.

\end{prop}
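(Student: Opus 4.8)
The plan is to work with the diagonalized system introduced in the introduction. First I would set $\Gamma := \omega - \mathcal{R}\theta$; using the vorticity equation $\partial_t\omega + v\cdot\nabla\omega + |\DD|\omega = \partial_1\theta$ together with the transport equation for $\theta$ and the identity $|\DD|\mathcal{R}\theta = \partial_1\theta$, one finds
$$
\partial_t\Gamma + v\cdot\nabla\Gamma + |\DD|\Gamma = -[\mathcal{R},\, v\cdot\nabla]\theta .
$$
The point of this reformulation is that the source term is now a commutator, which by Theorem \ref{propcom}(1) (with, say, $s=\tfrac12$) is controlled in $H^{1/2}$ by $\|\nabla v\|_{L^2}\|\theta\|_{B_{\infty,2}^{-1/2}} + \|v\|_{L^2}\|\theta\|_{L^2}$, hence in particular in $L^2$ by $\|\nabla v\|_{L^2}\|\theta\|_{L^2} + \|v\|_{L^2}\|\theta\|_{L^2}$ after using $L^2\hookrightarrow B_{\infty,2}^{-1/2}$ in 2D is \emph{not} quite right --- instead I would simply bound $\|[\mathcal{R},v\cdot\nabla]\theta\|_{L^2}$ directly, or better, pair the equation with $\Gamma$ and integrate by parts to move one derivative, as in the commutator estimates of Section \ref{sectionRiesz}.

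Next I would carry out the $L^2$ energy estimate on $\Gamma$. Taking the $L^2$ inner product with $\Gamma$ and using $\mathrm{div}\,v = 0$ to kill the transport term and Parseval for the dissipative term gives
$$
\tfrac12\tfrac{d}{dt}\|\Gamma(t)\|_{L^2}^2 + \|\Gamma(t)\|_{\dot H^{1/2}}^2 = -\int_{\RR^2} [\mathcal{R},\, v\cdot\nabla]\theta \; \Gamma \, dx .
$$
The commutator is estimated by Theorem \ref{propcom}(1): writing the right-hand side as a pairing in $\dot H^{-1/2}\times \dot H^{1/2}$, it is bounded by $\big(\|\nabla v\|_{L^2}\|\theta\|_{B_{\infty,2}^{-1/2}} + \|v\|_{L^2}\|\theta\|_{L^2}\big)\|\Gamma\|_{\dot H^{1/2}}$; absorbing $\|\Gamma\|_{\dot H^{1/2}}$ into the good term via Young's inequality leaves
$$
\tfrac{d}{dt}\|\Gamma(t)\|_{L^2}^2 + \|\Gamma(t)\|_{\dot H^{1/2}}^2 \lesssim \|\nabla v(t)\|_{L^2}^2\,\|\theta(t)\|_{B_{\infty,2}^{-1/2}}^2 + \|v(t)\|_{L^2}^2\,\|\theta^0\|_{L^2}^2 .
$$
Here $\|\theta(t)\|_{L^2} = \|\theta^0\|_{L^2}$ and, crucially, $\|\theta(t)\|_{B_{\infty,2}^{-1/2}}$ needs an a priori bound: I would get it from Proposition \ref{Besov-pro} (transport estimate in $B_{\infty,\infty}^{-1}$, then interpolate with the $L^r$ bound $\|\theta(t)\|_{L^r}\le\|\theta^0\|_{L^r}$ from Proposition \ref{thmlog}, exploiting $r>4$ so that the interpolation lands below $B_{\infty,2}^{-1/2}$) --- but that transport estimate involves $\int_0^t\|v\|_{B_{\infty,1}^1}$, which I do not yet control. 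So instead I would use the cruder route: bound $\|\theta\|_{B_{\infty,2}^{-1/2}} \lesssim \|\theta\|_{L^2}^{\alpha}\|\theta\|_{L^r}^{1-\alpha}$ by Bernstein (low frequencies in $L^2$, high frequencies in $L^r$ summed with the negative weight $-1/2$, which converges precisely because $-\tfrac12 + d(\tfrac1r - \tfrac12)<0$ when... ) --- the condition $r>4$ is exactly what makes $\sum_q 2^{-q} 2^{2q\cdot d(1/r-1/2)} \|\Delta_q\theta\|_{L^r}^2$ of the form giving $\|\theta\|_{L^2\cap L^r}$ controlled. Thus the source is bounded by $\Phi_0 \big(\|\nabla v\|_{L^2}^2 + \|v\|_{L^2}^2\big)$.

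Finally I would close the loop. The $L^2$ energy estimate for $v$ from Proposition \ref{es-energ} gives $\|v(t)\|_{L^2}^2 \le C_0(1+t^2)$ unconditionally. Since $\|\nabla v\|_{L^2} = \|\omega\|_{L^2}$ and $\omega = \Gamma + \mathcal{R}\theta$ with $\|\mathcal{R}\theta\|_{L^2}\lesssim\|\theta\|_{L^2} = \|\theta^0\|_{L^2}$, we have $\|\nabla v(t)\|_{L^2}^2 \lesssim \|\Gamma(t)\|_{L^2}^2 + \|\theta^0\|_{L^2}^2$. Plugging this into the differential inequality for $\|\Gamma\|_{L^2}^2$ yields
$$
\tfrac{d}{dt}\|\Gamma(t)\|_{L^2}^2 + \|\Gamma(t)\|_{\dot H^{1/2}}^2 \lesssim \Phi_0\big(1 + t^2 + \|\Gamma(t)\|_{L^2}^2\big),
$$
and Gronwall's lemma gives $\|\Gamma(t)\|_{L^2}^2 + \int_0^t\|\Gamma(\tau)\|_{\dot H^{1/2}}^2\,d\tau \le \Phi_1(t)$. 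Since $\|\omega(t)\|_{L^2}^2 \lesssim \|\Gamma(t)\|_{L^2}^2 + \|\theta^0\|_{L^2}^2$ and $\|(\omega-\mathcal{R}\theta)(\tau)\|_{\dot H^{1/2}} = \|\Gamma(\tau)\|_{\dot H^{1/2}}$, this is precisely the claimed bound. The existence of a weak solution satisfying these estimates then follows by a standard Friedrichs regularization/compactness argument as in Proposition \ref{es-energ}, the a priori bounds above providing the required uniform control. The main obstacle is the handling of the commutator source term together with the a priori control of $\|\theta\|_{B_{\infty,2}^{-1/2}}$ by $L^2\cap L^r$ norms without circularly invoking a bound on $\|v\|_{B_{\infty,1}^1}$; the hypothesis $r>4$ is exactly what is needed to make this Bernstein-type interpolation work.
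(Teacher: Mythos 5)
Your proposal is correct and follows essentially the same route as the paper: diagonalize via $\Gamma=\omega-\mathcal{R}\theta$, perform the $L^2$ energy estimate using $[\mathcal{R},v\cdot\nabla]\theta=\mathrm{div}([\mathcal{R},v]\theta)$ paired in $\dot H^{-1/2}\times\dot H^{1/2}$, control the commutator by Theorem \ref{propcom}(1) with $s=\tfrac12$ together with the embedding $L^2\cap L^r\hookrightarrow B_{\infty,2}^{-1/2}$ (valid exactly for $r>4$), and close with $\|\omega\|_{L^2}\lesssim\|\Gamma\|_{L^2}+\|\theta^0\|_{L^2}$ and Gronwall. (Minor remarks: the source term has sign $+[\mathcal{R},v\cdot\nabla]\theta$, which is immaterial for the estimate, and the detour through Proposition \ref{Besov-pro} in your first paragraph is unnecessary, as you yourself conclude.)
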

\begin{Rema}
From the estimate of Proposition \ref{prop500} we see that we have an additional   smoothing effect for the quantity $\omega-\mathcal{R}\theta$ since $\theta$ belongs only to the space $L^\infty_t(L^2\cap L^r)$. This phenomenon illustrates the strong coupling between the velocity and the temperature.
\end{Rema}
\begin{proof}[Proof of Proposition \ref{prop500}] We shall,   here again,   restrict ourselves  to the proof of  the {\it a priori} estimates.  The construction of global solutions can be done by following \cite{dp1}.  As explained  in the introduction we do not have any available obvious  $L^p$ estimate for the vorticity. Thus in order to get some $L^p$ estimates we write the Boussinesq  system under its diagonal form.  For this purpose we set $\Gamma=\omega-\mathcal{R}\theta.$ Then we get from \eqref{ns}
\begin{equation}\label{eq-com}
(\partial_t+v\cdot\nabla+|\DD|)\Gamma=[\mathcal{R},  v\cdot\nabla]\theta.
\end{equation}
By using  the identity  $[\mathcal{R},  v\cdot\nabla]\theta=\textnormal{div}\big([\mathcal{R},  v]\theta\big)$ and a  standard $L^2$ energy estimate,   we find 
\begin{eqnarray*}
\frac12\frac{d}{dt}\|\Gamma(t)\|_{L^2}^2+\|\Gamma(t)\|_{\dot{H}^{\frac12}}^2&=&\int_{\RR^2}\textnormal{div}\big([\mathcal{R},  v]\theta\big)(t,  x)\Gamma(t,  x)dx\\&\le& \big\|[\mathcal{R},  v]\theta(t)\big\|_{\dot{H}^{\frac12}}\|\Gamma(t)\|_{\dot{H}^{\frac12}}.
\end{eqnarray*}
From Theorem  \ref{propcom} and Proposition \ref{es-energ} we have 
\begin{eqnarray*}
\big\|[\mathcal{R},  v]\theta(t)\big\|_{\dot{H}^{\frac12}}
&\lesssim&
\|\nabla v(t)\|_{L^2}\|\theta(t)\|_{B_{\infty,  2}^{-\frac12}}+\|v(t)\|_{L^2}\|\theta(t)\|_{L^2}\\
&\lesssim& \|\omega(t)\|_{L^2}\|\theta(t)\|_{L^r}+(1+t).
\\
&\lesssim& \|\omega(t)\|_{L^2}\|\theta^0\|_{L^r}+(1+t).
\end{eqnarray*}
Here,   we have used that $\|\nabla v \|_{L^2} \approx \|\omega \|_{L^2}$ and the continuous embedding  \mbox{$L^r\hookrightarrow B_{\infty,  2}^{-\frac12},  $} for every $r>4$. Thus we get
\begin{eqnarray*}
\big\|[\mathcal{R},  v]\theta(t)\big\|_{\dot{H}^{\frac12}}\lesssim\|\omega(t)\|_{L^2}+ (1+t).
\end{eqnarray*}
However,   the $L^2$-continuity of  $\mathcal{R}$ and the conservation of the $L^2$ norm of $\theta$ yield together  
\begin{eqnarray*}
 \|\omega(t)\|_{L^2}&\le&\|\Gamma(t)\|_{L^2}+\|\mathcal{R}\theta(t)\|_{L^2}\\
 &\lesssim&\|\Gamma(t)\|_{L^2}+\|\theta^0\|_{L^2}.
\end{eqnarray*}
Collecting  the previous  estimates, we find
\begin{equation*}
\frac12\frac{d}{dt}\|\Gamma(t)\|_{L^2}^2+\|\Gamma(t)\|_{\dot{H}^{\frac12}}^2\lesssim \big( \|\Gamma(t)\|_{L^2}+1+t \big) \|\Gamma(t)\|_{\dot{H}^{\frac12}}.
\end{equation*}
It follows from  the Young inequality that 
\begin{equation*}
\frac{d}{dt}\|\Gamma(t)\|_{L^2}^2+\|\Gamma(t)\|_{\dot{H}^{\frac12}}^2\le C_{0} \|\Gamma(t)\|_{L^2}^2+C_{0}(1+t^2).
\end{equation*}
After an Integration  in  time  and the use of the  Gronwall inequality we obtain
\begin{eqnarray*}
\nonumber\|\Gamma(t)\|_{L^2}^2+\int_0^t\|\Gamma(\tau)\|_{\dot{H}^{\frac12}}^2d\tau&\le& C_0\big(1+ t^2\big)
e^{ C_0t}\\
&\le& \Phi_1(t).
\end{eqnarray*}

This ends the proof of the claimes  {\it a priori} estimate in  Proposition \ref{prop500}.

\end{proof}
\section{Proof of Theorem \ref{theo2}}
\label{proof}
The proof of Theorem \ref{theo2} will be done in three steps. First we prove other   {\it {\it a priori }   } estimates for the equations  \eqref{ns}. Second,  we prove the uniqueness part. Finally,     we   discuss    the construction of the solutions. 
 
\subsection{A priori estimates}
We have already proven some {\it a priori} estimates in Proposition \ref{es-energ} and Proposition \ref{prop500}.
In the sequel  we shall prove some other estimates. 
To estimate the  $L^p$ norm of $\omega$ we have to distinguish two cases. If $p\in ]2,  4[$  then 
we can prove this estimate in one step. If not,    we establish  first an estimate of   the $L^r$ norms of 
$\omega $ for all $r\in ]2,  4[$ and then\footnote{ Notice that,   concerning the velocity,   the 
assumptions of Theorem  \ref{theo2} are equivalent to $v^0\in L^2$ and $\omega^0\in L^r$ for 
all $r\in [2,  p],  $ for some $p\in ]2,  +\infty[$.} use it to get a smoothing effect on 
the quantity $(\omega-\mathcal{R}\theta)$. This will in turn  yield the crucial  estimate on  the Lipschitz 
norm of the velocity. Finally,   
by using the estimate on the lipschitz norm of the velocity,   one can easily propagate  the
 $L^p$ norm of the vorticity and conclude the subsection about the  $\ap$ estimates. 
\begin{prop}
\label{prop600}
Let $(v,  \theta)$ be a  solution of the Boussinesq 
system \eqref{ns}  such that 
 \mbox{$v^0\in H^1\cap \dot W^{1,  p}$}   and $\theta^0\in L^2\cap L^\infty$ 
  with $p\in]2,  +\infty[.$ Then,    
$$
\|\omega(t)\|_{L^r}^r+\int_0^t\|(\omega-\mathcal{R}\theta)(\tau)\|_{L^{2r}}^rd\tau\leq \Phi_1(t),  
$$
for every  $r\in [2,  4[\cap [2,  p].$
\end{prop}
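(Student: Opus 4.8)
The plan is to follow the scheme of Proposition \ref{prop500}: one works with the diagonalized quantity $\Gamma:=\omega-\mathcal{R}\theta$, which by \eqref{eq-com} solves $(\partial_t+v\cdot\nabla+|\DD|)\Gamma=[\mathcal{R},v\cdot\nabla]\theta$. Since $\mathcal{R}$ is bounded on $L^r$ for $1<r<\infty$ and $\|\theta(t)\|_{L^q}=\|\theta^0\|_{L^q}$ for every $q\in[1,\infty]$ (the flow of $v$ being measure preserving), one has $\|\omega(t)\|_{L^r}\leq\|\Gamma(t)\|_{L^r}+C\|\theta^0\|_{L^r}$ with $\theta^0\in L^2\cap L^\infty\subset L^r$, and $\Gamma^0\in L^2\cap L^p\subset L^r$ under the hypotheses on $v^0,\theta^0$; so it suffices to control $\|\Gamma(t)\|_{L^r}^r+\int_0^t\|\Gamma(\tau)\|_{L^{2r}}^r\,d\tau$. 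I will use the bounds already at our disposal: $\|v(t)\|_{L^2}\leq C_0(1+t)$ from (the proof of) Proposition \ref{es-energ}, $\|\omega(t)\|_{L^2}^2\leq\Phi_1(t)$ from Proposition \ref{prop500} (which applies since $\theta^0\in L^2\cap L^\infty$), and the $L^q$ conservation of $\theta$.

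The core of the proof is an $L^r$ energy estimate carried out \emph{directly on the equation for $\Gamma$}, rather than on dyadic blocks: multiplying by $|\Gamma|^{r-2}\Gamma$ and integrating over $\RR^2$, the transport term vanishes identically because $\textnormal{div}\,v=0$ — in particular no transport commutator appears — and the dissipation term is handled by the Córdoba--Córdoba positivity inequality together with $\dot H^{1/2}(\RR^2)\hookrightarrow L^4$:
$$\int_{\RR^2}(|\DD|\Gamma)\,|\Gamma|^{r-2}\Gamma\,dx\ \gtrsim_r\ \big\|\,|\Gamma|^{r/2}\,\big\|_{\dot H^{1/2}}^2\ \gtrsim\ \|\Gamma\|_{L^{2r}}^r,$$
which is legitimate for $r\geq2$ (so that $|\Gamma|^{r/2}$ is $C^1$). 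Thus
$$\frac1r\frac{d}{dt}\|\Gamma\|_{L^r}^r+c\,\big\|\,|\Gamma|^{r/2}\,\big\|_{\dot H^{1/2}}^2\ \leq\ \int_{\RR^2}\big([\mathcal{R},v\cdot\nabla]\theta\big)\,|\Gamma|^{r-2}\Gamma\,dx.$$

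It remains to estimate the commutator term so that the dissipation can be absorbed. As in Proposition \ref{prop500} one writes $[\mathcal{R},v\cdot\nabla]\theta=\textnormal{div}\,\big([\mathcal{R},v]\theta\big)$ and transfers the derivative, distributing a half-derivative $|\DD|^{1/2}$ between the two factors; the factor coming from $|\Gamma|^{r-2}\Gamma$ is then controlled, via a fractional Leibniz/chain rule and Gagliardo--Nirenberg, by a power of $\||\Gamma|^{r/2}\|_{\dot H^{1/2}}$ strictly less than $2$, while the factor coming from $[\mathcal{R},v]\theta$ is controlled by an appropriate Sobolev embedding and Theorem \ref{propcom}(1) used with $s=\tfrac{r-1}{r}\in\,]0,1[$ together with $L^\infty\hookrightarrow B_{\infty,2}^{s-1}$, which yields a bound of the form $\|\nabla v\|_{L^2}\|\theta\|_{L^\infty}+\|v\|_{L^2}\|\theta\|_{L^2}$. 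A Young inequality then absorbs $\tfrac c2\||\Gamma|^{r/2}\|_{\dot H^{1/2}}^2$ and, inserting $\|\nabla v(t)\|_{L^2}=\|\omega(t)\|_{L^2}\lesssim\Phi_1(t)$, $\|v(t)\|_{L^2}\leq C_0(1+t)$ and the conservation of $\|\theta(t)\|_{L^2\cap L^\infty}$, it leaves
$$\frac{d}{dt}\|\Gamma\|_{L^r}^r+\tfrac c2\,\|\Gamma\|_{L^{2r}}^r\ \leq\ C\,\Phi_1(t).$$
Integrating in time (with $\|\Gamma^0\|_{L^r}^r\leq C_0$ and $\int_0^t\Phi_1\leq\Phi_1$) and translating back from $\Gamma$ to $\omega$ gives the claim. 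The main obstacle is precisely this last step: the equation for $\Gamma$ gains only half a derivative, so one must find a functional framework in which the nonlinear commutator term genuinely closes, the balancing idea being to spend the available $\dot H^{1/2}$ ($L^{2r}$) smoothing of $\Gamma$ and to pay for the commutator with the already-established $L^2$-type bounds on $v,\omega$ and the $L^\infty$ bound on $\theta$ (whence the hypothesis $\theta^0\in L^\infty$); it is in the accounting of these fractional-derivative exponents that the restriction $r<4$ arises.
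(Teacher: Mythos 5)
Your architecture is exactly the paper's: the $L^r$ energy estimate performed directly on the equation for $\Gamma=\omega-\mathcal{R}\theta$, the lower bound $\int_{\RR^2}(|\DD|\Gamma)|\Gamma|^{r-2}\Gamma\,dx\gtrsim\||\Gamma|^{r/2}\|_{\dot H^{1/2}}^2\gtrsim\|\Gamma\|_{L^{2r}}^r$, and the duality pairing of $\textnormal{div}\big([\mathcal{R},v]\theta\big)$ against $|\Gamma|^{r-2}\Gamma$ using Theorem \ref{propcom}(1) together with a chain-rule estimate $\||u|^{r-2}u\|_{\dot H^{s}}\lesssim\|u\|_{L^{2r}}^{r-2}\|u\|_{\dot H^{s+1-\frac2r}}$ (Lemma \ref{propDelta}). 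However, the two places where you leave the exponents unspecified or mis-specified are precisely where the proof lives, and as written they do not close. Your choice $s=\frac{r-1}{r}$ in Theorem \ref{propcom}(1) forces the dual norm $\||\Gamma|^{r-2}\Gamma\|_{\dot H^{1/r}}$ on the other factor, and the chain rule then produces $\|\Gamma\|_{\dot H^{1-\frac1r}}$; for $r>2$ this exceeds $\dot H^{1/2}$ and no a priori bound for it is available, so the estimate cannot be closed. The correct bookkeeping is to measure the commutator in $\dot H^{1-s}$ with $s$ chosen so that $s+1-\frac2r=\frac12$, i.e. $s=\frac2r-\frac12$, which lands exactly on the dissipation norm $\|\Gamma\|_{\dot H^{1/2}}$; the constraint $s>0$ is where $r<4$ enters (and $1-s\in\,]0,1[$ then holds automatically).

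Second, your closing display $\frac{d}{dt}\|\Gamma\|_{L^r}^r+\frac c2\|\Gamma\|_{L^{2r}}^r\le C\Phi_1(t)$ is not attainable: after Young's inequality absorbs the factor $\|\Gamma\|_{L^{2r}}^{r-2}$ into the dissipation, the remainder is $\Phi_1(t)\,\|\Gamma(t)\|_{\dot H^{1/2}}^{r/2}$, and $\|\Gamma(t)\|_{\dot H^{1/2}}$ admits no pointwise-in-time bound --- Proposition \ref{prop500} only provides $\Gamma\in L^2_{loc}(\RR_+;\dot H^{1/2})$. One must therefore integrate the differential inequality in time and apply H\"older, which is admissible only because $\frac r2<2$; this is a second, independent occurrence of the restriction $r<4$ that your accounting does not capture. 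Both corrections are mechanical once identified, but your proposal explicitly defers ``the accounting of these fractional-derivative exponents,'' and that accounting is the actual substance of the proposition.
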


\begin{proof}[Proof of Proposition \ref{prop600}]
 Multiplying  \eqref{eq-com}  by $|\Gamma|^{r-2}\Gamma$ and integrating in space
  variable we get for every  $s\in]0,  1[$ ($s$ will be carefully chosen later)
\begin{eqnarray}\label{eqt2}
\nonumber\frac1r\frac{d}{dt}\|\Gamma(t)\|_{L^r}^r+\int_{\RR^2}(|\DD|\Gamma)\,   
|\Gamma|^{r-2}\Gamma dx&=&\int_{\RR^2}\textnormal{div}\big([\mathcal{R},  v]\theta\big)|
\Gamma|^{r-2}\Gamma dx\\&\le& \big\|[\mathcal{R},  v]\theta(t)\big\|_{\dot{H}^{1-s}}\||
\Gamma|^{r-2}\Gamma(t)\|_{\dot{H}^{s}}.
\end{eqnarray}
According  to Lemma 3.3 in \cite{Ju} one has 
$$
\frac2r\| |\Gamma|^{\frac r2}\|_{\dot{H}^{\frac12}}^2\le \int_{\RR^2}(|\DD|\Gamma)\,     |\Gamma|^{r-2}\Gamma dx.
$$
Combining this estimate with  the  Sobolev embedding $\dot{H}^{\frac12}\hookrightarrow L^4,    $  we find 
\begin{equation}\label{eqt3}
\| \Gamma\|_{L^{2r}}^r\lesssim \int_{\RR^2}(|\DD|\Gamma)\,   |\Gamma|^{r-2}\Gamma dx.
\end{equation}
To estimate the RHS of \eqref{eqt2},   we use the following lemma (see the appendix for the proof).
\begin{lemm}
\label{propDelta}
\item Let    $\beta\in [2,  +\infty[$ and $s\in]0,  1[$.Then,   we have \begin{equation*}
\||u|^{\beta-2}u\|_{\dot{H}^s}\lesssim\|u\|_{L^{2\beta}}^{\beta-2}\|u\|_{\dot{H}^{s+1-\frac2\beta}},  
\end{equation*}
 for every smooth function $u.$
\end{lemm}
Combined with   (\ref{eqt2}) and (\ref{eqt3}) this lemma yields
$$
\frac{d}{dt}\|\Gamma(t)\|_{L^r}^r+c\|\Gamma(t)\|_{L^{2r}}^r \lesssim \big\|[\mathcal{R},  v]\theta(t)\big\|_{\dot{H}^{1-s}} 
\|\Gamma\|_{L^{2r}}^{r-2}\|\Gamma\|_{\dot{H}^{s+1-\frac2r}}.
$$
We choose $s\in]0,  1[$ such that $s+1-\frac2r=\frac12$ which means that $s=\frac2r-\frac12,  $ this is
 possible if $r\in[2,  4[.$ Thus we get
$$
\frac{d}{dt}\|\Gamma(t)\|_{L^r}^r+c\|\Gamma(t)\|_{L^{2r}}^r \lesssim \big\|[\mathcal{R},  v]
\theta(t)\big\|_{\dot{H}^{1-s}} \|\Gamma\|_{L^{2r}}^{r-2}\|\Gamma\|_{\dot{H}^{\frac12}}.
$$
Theorem  \ref{propcom} and Proposition  \ref{prop500}  yield
\begin{eqnarray*}
\big\|[\mathcal{R},  v]\theta(t)\big\|_{\dot{H}^{1-s}}&\le&\big\|[\mathcal{R},  v]\theta(t)\big\|_{{H}^{1-s}}\\
&\lesssim&
 \|\nabla v\|_{L^2}\|\theta\|_{{B}_{\infty,  2}^{-s}}+\|v(t)\|_{L^2}\|\theta(t)\|_{L^2}\\
&\lesssim&\|\omega(t)\|_{L^2}\|\theta(t)\|_{{B}_{\infty,  2}^{-s}}+C_0(1+t^2)\\
&\le& \Phi_1(t)\|\theta(t)\|_{{B}_{\infty,  2}^{-s}}+C_0(1+t^2).
\end{eqnarray*}
It suffices now to use the  embedding $L^\infty\hookrightarrow {B}_{\infty,  2}^{-s}$ for $s>0.$ 
  and Proposition \ref{es-energ} to get  that
$$
\big\|[\mathcal{R},  v]\theta(t)\big\|_{\dot{H}^{1-s}}\le \Phi_1(t).
$$
Therefore
$$
\frac{d}{dt}\|\Gamma(t)\|_{L^r}^r+c\|\Gamma(t)\|_{L^{2r}}^r \le \Phi_1(t) \|\Gamma\|_{L^{2r}}^{r-2}\|\Gamma\|_{\dot{H}^{\frac12}}.
$$
From the  Young inequality 
$$
|ab|\leq C|a|^{\frac{r}{2}}+\frac c2 |b|^{\frac{r}{r-2}}
$$
we get
$$
\frac{d}{dt}\|\Gamma(t)\|_{L^r}^r+\|\Gamma(t)\|_{L^{2r}}^r \le \Phi_1(t)\|\Gamma(t)\|_{\dot{H}^{\frac12}}^{\frac r2}.
$$
 After integration in time and the application  of the   H\"older inequality (remember that $r<4$), we find 
 \begin{eqnarray}
 \nonumber
\|\Gamma(t)\|_{L^r}^r+\int_0^t\|\Gamma(\tau)\|_{L^{2r}}^rd\tau&\le& \Phi_1(t)\big(\int_0^t\|\Gamma(\tau)\|_{\dot H^{\frac12}}^2d\tau\big)^{r/4}+\|\Gamma^0\|_{L^r}^r
\\
 \label{eqt1}
&\le& \Phi_1(t).
\end{eqnarray}
In the last line we have used Proposition \ref{prop500}.
\end{proof}
Next we give a smoothing effect for the quantity $\Gamma$ which will be the keystone of a Lipschitz control of the velocity.
\begin{prop}
\label{gggg}
Under the assumptions of  Proposition \ref{prop600} we have 
$$
\|\omega-\mathcal{R}\theta\|_{\widetilde L^\rho_tB_{r,  1}^{\frac 2r}}\leq  \Phi_1(t),  
$$
for every \mbox{$r\in [2,  4[\cap [2,  p]$} and $\rho\in [1,  \frac r2[.$
\end{prop}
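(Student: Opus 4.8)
The plan is to exploit the diagonalized equation \eqref{eq-com} for $\Gamma=\omega-\mathcal{R}\theta$ and run a frequency‑localized smoothing estimate. First I would apply $\Delta_q$ to \eqref{eq-com}, obtaining
\[
\partial_t\Delta_q\Gamma+v\cdot\nabla\Delta_q\Gamma+|\DD|\Delta_q\Gamma=-[\Delta_q,v\cdot\nabla]\Gamma+\Delta_q\big([\mathcal{R},v\cdot\nabla]\theta\big).
\]
Taking the $L^r$ energy estimate, the incompressibility of $v$ kills the convective term while the dissipation produces a gain through the pointwise inequality of \cite{cc,Ju} (the one already used in the proof of Proposition \ref{prop600}), so that for $q\geq0$
\[
\tfrac{d}{dt}\|\Delta_q\Gamma\|_{L^r}+c\,2^q\|\Delta_q\Gamma\|_{L^r}\lesssim \big\|[\Delta_q,v\cdot\nabla]\Gamma\big\|_{L^r}+\big\|\Delta_q\big([\mathcal{R},v\cdot\nabla]\theta\big)\big\|_{L^r}.
\]
Duhamel's formula with the kernel $e^{-c(t-\tau)2^q}$, followed by the $L^\rho_t$ norm and Young's convolution inequality, then gives
\[
\|\Delta_q\Gamma\|_{L^\rho_tL^r}\lesssim 2^{-q/\rho}\Big(\|\Delta_q\Gamma^0\|_{L^r}+\big\|[\Delta_q,v\cdot\nabla]\Gamma\big\|_{L^1_tL^r}+\big\|\Delta_q\big([\mathcal{R},v\cdot\nabla]\theta\big)\big\|_{L^1_tL^r}\Big),
\]
the block $q=-1$ being treated directly (no gain, but this only costs a power of $t$). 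Multiplying by $2^{2q/r}$ turns the gain into $2^{-q(1/\rho-2/r)}$, and it is exactly the hypothesis $\rho<r/2$ that makes $\sum_{q\ge0}2^{-q(1/\rho-2/r)}$ a convergent geometric series; summing in $q$ reduces matters to bounding the three resulting series.

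The data term is harmless: $\Gamma^0=\omega^0-\mathcal{R}\theta^0\in L^r$ since $\omega^0\in L^2\cap L^p$ and $\theta^0\in L^2\cap L^\infty$, hence $\sum_q 2^{-q(1/\rho-2/r)}\|\Delta_q\Gamma^0\|_{L^r}\lesssim\|\Gamma^0\|_{L^r}$. For the forcing, Theorem \ref{propcom}(2) applied with the exponent $r\in[2,\infty]$ gives
\[
\big\|[\mathcal{R},v\cdot\nabla]\theta\big\|_{B_{r,\infty}^0}\lesssim\|\nabla v\|_{L^r}\|\theta\|_{L^\infty}+\|v\|_{L^2}\|\theta\|_{L^2},
\]
and since $\|\nabla v\|_{L^r}\approx\|\omega\|_{L^r}\le\Phi_1(t)$ by Proposition \ref{prop600}, $\|\theta\|_{L^\infty}=\|\theta^0\|_{L^\infty}$, $\|v(t)\|_{L^2}\lesssim1+t$ and $\|\theta(t)\|_{L^2}=\|\theta^0\|_{L^2}$ by Proposition \ref{es-energ}, the forcing series is $\lesssim\int_0^t\Phi_1(\tau)\,d\tau\le\Phi_1(t)$.

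The heart of the matter is the commutator series $\sum_{q\ge0}2^{-q(1/\rho-2/r)}\big\|[\Delta_q,v\cdot\nabla]\Gamma\big\|_{L^1_tL^r}$. I would split $[\Delta_q,v\cdot\nabla]\Gamma$ by Bony's decomposition: the piece with $v$ at low frequency is handled, as in the proof of Theorem \ref{propcom}, through the kernel estimate of Lemma \ref{commu}, which trades one derivative on $\Gamma$ for a factor $\|\nabla S_{q-1}v\|$ while keeping the output localized at frequency $2^q$; the remainder term and the piece with $\Gamma$ at low frequency are controlled by $\|\nabla v\|_{L^r}$ paired with $L^\infty$‑type norms of the dyadic blocks of $\Gamma$. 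To estimate these blocks one invokes the a priori bounds already available — $\Gamma\in L^\infty_t(L^2\cap L^r)$ and $\Gamma\in L^2_t\dot{H}^{\frac12}$ from Proposition \ref{prop500}, and, crucially, the extra smoothing $\Gamma\in L^r_tL^{2r}$ from Proposition \ref{prop600}, without which the Hölder exponents do not balance against $\nabla v\in L^\infty_tL^r$ (here one uses $r<4$). The main obstacle is precisely this balancing of the $\ell^1$‑summability in $q$ (weight $2^{-q(1/\rho-2/r)}$, $\rho<r/2$) against the time integrability and the regularity actually at hand for $\Gamma$; once it is carried out, the commutator is bounded by an expression of the form $\int_0^t\|\nabla v(\tau)\|_{L^r}\big(\Phi_1(\tau)+\|\Gamma(\tau)\|_{B_{\infty,1}^0}\big)d\tau$, and a Gronwall argument — using $\|\nabla v\|_{L^\infty_tL^r}\le\Phi_1(t)$ and the embedding $B_{r,1}^{2/r}\hookrightarrow B_{\infty,1}^0$ valid in dimension two — absorbs the $\|\Gamma\|_{B_{\infty,1}^0}$ contribution and yields $\|\Gamma\|_{\widetilde L^\rho_tB_{r,1}^{2/r}}\le\Phi_1(t)$.
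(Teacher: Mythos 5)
Your setup --- localizing \eqref{eq-com} with $\Delta_q$, the $L^r$ energy estimate combined with the generalized Bernstein inequality, Duhamel's formula with Young's inequality in time, and Theorem \ref{propcom}(2) for the forcing $[\mathcal{R},v\cdot\nabla]\theta$ --- is exactly the paper's route, and your treatment of the data and forcing series is correct. The gap is in how you close the commutator term. After summing over $q$, your argument produces an inequality of the form
$$
\|\Gamma\|_{\widetilde L^\rho_tB_{r,1}^{2/r}}\;\le\;\Phi_1(t)+C\int_0^t\Phi_1(\tau)\,\|\Gamma(\tau)\|_{B_{r,1}^{2/r}}\,d\tau,
$$
and this is \emph{not} of Gronwall type: the left-hand side is a time-integrated norm ($\ell^1$ in $q$ of $L^\rho$ in time), which does not control the instantaneous norm $\|\Gamma(\tau)\|_{B_{r,1}^{2/r}}$ in the integrand, and no pointwise-in-time bound on $\|\Gamma(\tau)\|_{B_{r,1}^{2/r}}$ is available from Propositions \ref{prop500} and \ref{prop600} (the smoothing there is only space--time integrated). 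The embedding $B_{r,1}^{2/r}\hookrightarrow B_{\infty,1}^0$ merely rewrites the integrand as the target norm at time $\tau$; it does not create a Gronwall structure. For $\rho=1$ the inequality even reads $\int_0^t y(\tau)\,d\tau\le\Phi_1(t)+C\int_0^t\Phi_1(\tau)y(\tau)\,d\tau$ with $y(\tau)=\|\Gamma(\tau)\|_{B_{r,1}^{2/r}}$, which carries no information once $C\Phi_1\ge1$.

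The missing idea is a self-improvement by frequency truncation. Keeping the weight $2^{q(\frac2r-\frac1\rho)}$ produced by Duhamel, split the dyadic sum at $q=N$: for $q<N$ do not use the commutator structure at all and bound crudely $\sum_{q<N}2^{2q/r}\|\Delta_q\Gamma\|_{L^\rho_tL^r}\lesssim2^{2N/r}\|\Gamma\|_{L^\rho_tL^r}\le2^{2N/r}\Phi_1(t)$ by Proposition \ref{prop600}; for $q\ge N$ the commutator bound $\|[\Delta_q,v\cdot\nabla]\Gamma\|_{L^r}\lesssim\|\nabla v\|_{L^r}\|\Gamma\|_{B_{r,1}^{2/r}}$ of Lemma \ref{comDelta1}, together with $\|\Gamma\|_{L^1_tB_{r,1}^{2/r}}\le t^{1-\frac1\rho}\|\Gamma\|_{\widetilde L^\rho_tB_{r,1}^{2/r}}$, yields a contribution $2^{-N(\frac1\rho-\frac2r)}\Phi_1(t)\|\Gamma\|_{\widetilde L^\rho_tB_{r,1}^{2/r}}$. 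Choosing $N$ so that $2^{-N(\frac1\rho-\frac2r)}\Phi_1(t)\approx\frac12$ --- possible precisely because $\rho<\frac r2$ gives $\frac1\rho-\frac2r>0$, which is where that hypothesis is really used, rather than for summing a geometric series --- one absorbs the unknown into the left-hand side at the price of the factor $2^{2N/r}$, which is again a $\Phi_1(t)$. Without this (or an equivalent) absorption step your proof does not close.
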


\begin{proof}[Proof of Proposition \ref{gggg}]
For $q\in\NN$ we set $\Gamma_q=\Delta_q\Gamma.$ Then,   we localize   in frequencies the equation \eqref{eq-com}  for
 $\Gamma$ to get 
\begin{eqnarray*}
\partial_t\Gamma_q+v\cdot\nabla\Gamma_q+|\DD|\Gamma_q&=&-[\Delta_q,  v\cdot\nabla]\Gamma+\Delta_q([\mathcal{R},  v\cdot\nabla]\theta)\\
&:=&f_q
\end{eqnarray*}
Since $\Gamma_q$ is a real-valued  function  then multiplying the above equation by $|\Gamma_q|^{r-2}\Gamma_q$ and integrating in the  space variable we find
$$
\frac1r\frac{d}{dt}\|\Gamma_q(t)\|_{L^r}^r+\int_{\RR^2}(|\DD|\Gamma_q)\,   |\Gamma_q|^{r-2}\Gamma_q dx\le\|\Gamma_q(t)\|_{L^r}^{r-1}\|f_q(t)\|_{L^r}.
$$
From \cite{cmz}, we have  the following generalized Bernstein inequality
$$ \int_{\mathbb{R}^d}( |\DD| \Gamma_{q})  |\Gamma_{q}|^{r-2} \Gamma_{q}\, dx \geq c 2^q  \|\Gamma_{q}\|_{L^r}^r,
$$ 
for some $c>0$ independent of $q$ and hence we find
$$
\frac1r\frac{d}{dt}\|\Gamma_q(t)\|_{L^r}^r+c2^{q}\|\Gamma(t)\|_{L^r}^r\le\|\Gamma_q(t)\|_{L^r}^{r-1}\|f_q(t)\|_{L^r}.
$$
This yields
$$
\|\Gamma_q(t)\|_{L^r}\le e^{-ct2^q}\|\Gamma_q^0\|_{L^r}+\int_0^te^{-c(t-\tau)2^q}\|f_q(\tau)\|_{L^r}d\tau.
$$
By taking  the $L^\rho[0,  t]$ norm  and by  using   convolution inequalities,   we find 
\begin{eqnarray}
\label{trois0}
\nonumber2^{q\frac2r}\|\Gamma_q\|_{L^\rho_tL^r}\lesssim 2^{q(\frac2r-\frac1\rho)}\|\Gamma_q^0\|_{L^r}&+&2^{q(\frac2r-\frac1\rho)}\int_0^t\big\|[\Delta_q,   v\cdot\nabla]\Gamma(\tau)\big\|_{L^r}d\tau\\
&+&2^{q(\frac2r-\frac1\rho)}\int_0^t\big\|\Delta_q([\mathcal{R},  v\cdot\nabla]\theta)(\tau)\big\|_{L^r}d\tau.
\end{eqnarray}
To estimate the second integral of the RHS we use the  part  (2) of  Theorem  \ref{propcom},   Proposition \ref{es-energ}
 and Proposition \ref{prop600} to get,   for every $q\in \mathbb N$
\begin{eqnarray}
\nonumber
\big\|\Delta_q([\mathcal{R},  v\cdot\nabla]\theta)\big\|_{L^r}&\lesssim&\|\nabla v\|_{L^r}\|\theta^0\|_{L^\infty}+\|v\|_{L^2}\|\theta\|_{L^2}\\
\nonumber
&\lesssim&\|\omega\|_{L^r}\|\theta^0\|_{L^\infty}+\|v\|_{L^2}\|\theta\|_{L^2}\\ 
\label{sss}
&\le&\Phi_1(t).
\end{eqnarray}

To estimate the first integral of the RHS we use 
 the following lemma (see the appendix for the proof).
 \begin{lemm}
 \label{comDelta1}
 Let $v$ be a smooth divergence-free vector field and $f$ be a smooth scalar function. Then,    for all $ \alpha\in[1,  \infty]$ and $ q\geq -1,  $
$$
\|[\Delta_q,   v\cdot\nabla]f\|_{L^\alpha}\lesssim \|\nabla v\|_{L^r}\|f\|_{B_{\alpha,  1}^{ {2 \over \alpha} } }.
$$
\end{lemm}
Using this lemma and Proposition \ref{prop600} we infer
\begin{eqnarray*}
\big\|[\Delta_q,   v\cdot\nabla]\Gamma\big\|_{L^r}&\lesssim& \|\nabla v\|_{L^r}\|\Gamma\|_{B_{r,  1}^{\frac2r}}\\
&\lesssim&
 \|\omega\|_{L^r}\|\Gamma\|_{B_{r,  1}^{\frac2r}}
 \\
&\le&\Phi_1(t)\|\Gamma\|_{B_{r,  1}^{\frac2r}}.
\end{eqnarray*}
Let $N\in \Bbb N$  to be chosen later. We shall split the sum that we have to estimate into two parts $q<N$ and $q\geq N$.
Since  $4>r>2\rho$ we get 
\begin{eqnarray}
\label{un}
\sum_{q\geq N}2^{q(\frac2r-\frac1\rho)}\big\|[\Delta_q,   v\cdot\nabla]\Gamma\big\|_{L^r}\le 2^{-N(\frac1\rho-\frac2r)}\Phi_1(t)\|\Gamma\|_{B_{r,  1}^{\frac2r}}.
\end{eqnarray}
To estimate the low frequencies,   we first  use the crude estimate
\begin{eqnarray}
\nonumber
 \sum_{q<N}2^{q\frac2r}\|\Gamma_q\|_{L^\rho_tL^r}  &\lesssim&
 2^{\frac2rN}  \|\Gamma\|_{L^\rho_t L^r}
 \\
 \label{cinq}
 &\le& 2^{\frac2rN}\Phi_1(t).
 \end{eqnarray}
 In the last line we have used \eqref{eqt1}.
Gathering   \eqref{trois0},   \eqref{sss},    \eqref{un} and \eqref{cinq} together yield
\begin{eqnarray*}
\nonumber
\|\Gamma\|_{\widetilde L^\rho_tB_{r,  1}^{\frac2r}}
&=&\sum_{q<N}2^{q\frac2r}\|\Delta_q\Gamma\|_{L^\rho_t L^r}+\sum_{q\geq N}2^{q\frac2r}\|\Delta_q\Gamma\|_{L^\rho_t L^r}\\
& \le& 2^{\frac2rN} \Phi_1(t)+ 2^{-N(\frac1\rho-\frac2r)} \Phi_1(t)\|\Gamma\|_{ L^1_tB_{r,  1}^{\frac2r}}
\\
 &\le& 2^{\frac2rN} \Phi_1(t)+ 2^{-N(\frac1\rho-\frac2r)} \Phi_1(t)t^{1-\frac1\rho}\|\Gamma\|_{  L^\rho_tB_{r,  1}^{\frac2r}}
 \\
 &\le& 2^{\frac2rN} \Phi_1(t)+ 2^{-N(\frac1\rho-\frac2r)} \Phi_1(t)\|\Gamma\|_{ \widetilde L^\rho_tB_{r,  1}^{\frac2r}}
 \end{eqnarray*}

We choose $N$ such that 
$$
2^{-N(\frac1\rho-\frac2r)} \Phi_1(t)\approx \frac{1}{2}
$$
and we finally obtain
$$
\|\Gamma\|_{\widetilde L^\rho_tB_{r,  1}^{\frac2r}} \leq \Phi_1(t).
$$
This ends  the proof of the desired result.
\end{proof}
\begin{Rema}
From the embedding $B_{r,  1}^{\frac2r}\hookrightarrow B_{\infty,  1}^0$ we immediately get from
 the above estimate that  for $t\in\RR_+$
\begin{equation}\label{smooth1}
\|\Gamma\|_{\widetilde L^\rho_tB_{\infty,  1}^0}\le \Phi_1(t).
\end{equation}
\end{Rema}

\bigskip

  The previous propositions allow us to prove a crucial $\ap$  estimate on the gradient of $v$ and  to 
  propagate the Besov norm of  $\theta$.
 This  will be  particularly  important  for the uniqueness part of the proof of the main theorem. 
\begin{prop}
\label{prons1} 
Let $(v,  \theta)$ be a smooth solution of the  system \eqref{ns}. Let \mbox{$v^0\in H^1\cap \dot W^{1,  p}$} 
 with $p\in]2,  +\infty[$   and $\theta^0\in L^2\cap B_{\infty,  1}^0$. Then,   we have  
$$
\|v\|_{\widetilde L^\rho_t B_{\infty,  1}^1}+\|\theta (t)\|_{B_{\infty,  1}^{0}}\leq  \Phi_1(t),  
$$
 for every $\rho\in [1,  2[\cap [1,  \frac p2[.$ 
\end{prop}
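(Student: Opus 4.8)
The goal is to control $\|v\|_{\widetilde L^\rho_t B_{\infty,1}^1}$ and $\|\theta(t)\|_{B_{\infty,1}^0}$ simultaneously, and the two bounds are coupled: the Lipschitz control of $v$ needs a bound on $\theta$ in a logarithmic-type norm, while the transport estimate for $\theta$ (Proposition \ref{thmlog}, first inequality) needs $\int_0^t\|\nabla v\|_{L^\infty}$. So the natural strategy is to first get a bound on $\|\nabla v\|_{L^1_tL^\infty}$ (or on $\|v\|_{L^1_tB_{\infty,1}^1}$), feed it into the logarithmic transport estimate for $\theta$, and then close the loop. The key point is that the smoothing estimate already obtained for $\Gamma=\omega-\mathcal{R}\theta$ in Proposition \ref{gggg} / Remark \eqref{smooth1}, namely $\|\Gamma\|_{\widetilde L^\rho_tB_{\infty,1}^0}\le\Phi_1(t)$ for $\rho\in[1,\frac p2[$, does most of the work: since $\omega=\Gamma+\mathcal R\theta$, controlling $\omega$ in $\widetilde L^\rho_tB_{\infty,1}^0$ reduces to controlling $\mathcal R\theta$ there, and by Proposition \ref{cor1}(2) the Riesz transform is bounded on each dyadic block, so $\|\mathcal R\theta\|_{B_{\infty,1}^0}\lesssim \|\theta\|_{B_{\infty,1}^0}$.

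\textbf{Step 1: reduce $v$ to $\omega$ to $\Gamma$ and $\theta$.} First I would write $\|v\|_{B_{\infty,1}^1}\lesssim \|v\|_{L^2}+\|\omega\|_{B_{\infty,1}^0}$ (Biot--Savart: the low-frequency part is controlled by $\|v\|_{L^2}\le C_0(1+t)$ from Proposition \ref{es-energ}, and for $q\ge0$ one has $\|\Delta_q v\|_{L^\infty}\approx 2^{-q}\|\Delta_q\omega\|_{L^\infty}$). Then $\|\omega\|_{B_{\infty,1}^0}\le\|\Gamma\|_{B_{\infty,1}^0}+\|\mathcal R\theta\|_{B_{\infty,1}^0}\lesssim\|\Gamma\|_{B_{\infty,1}^0}+\|\theta\|_{B_{\infty,1}^0}$. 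Taking $\widetilde L^\rho_t$ norms and using \eqref{smooth1}, this gives
$$
\|v\|_{\widetilde L^\rho_tB_{\infty,1}^1}\le \Phi_1(t)+C\|\theta\|_{\widetilde L^\rho_tB_{\infty,1}^0}.
$$

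\textbf{Step 2: close the estimate on $\theta$.} Now apply the logarithmic transport estimate of Proposition \ref{thmlog} to $\theta$ (with $\kappa=0$, $f=0$, $p=\infty$, $r=1$):
$$
\|\theta\|_{\widetilde L^\infty_tB_{\infty,1}^0}\le C\|\theta^0\|_{B_{\infty,1}^0}\Big(1+\int_0^t\|\nabla v(\tau)\|_{L^\infty}d\tau\Big).
$$
Since $\|\nabla v\|_{L^\infty}\lesssim\|v\|_{B_{\infty,1}^1}$ and, by Hölder in time (using $\rho<\frac p2$ but also just $\rho\ge1$), $\int_0^t\|\nabla v\|_{L^\infty}d\tau\lesssim t^{1-1/\rho}\|v\|_{\widetilde L^\rho_tB_{\infty,1}^1}$ — or more simply take $\rho=1$ here and use $\|v\|_{L^1_tB_{\infty,1}^1}$ directly — combine with Step 1 to get
$$
\|v\|_{\widetilde L^\rho_tB_{\infty,1}^1}\le\Phi_1(t)\Big(1+\int_0^t\|\nabla v(\tau)\|_{L^\infty}d\tau\Big)\le\Phi_1(t)\big(1+\|v\|_{L^1_tB_{\infty,1}^1}\big).
$$
For $\rho=1$ this is a linear integral inequality in $t\mapsto\|v\|_{L^1_tB_{\infty,1}^1}$ after noting $\|v\|_{L^1_tB_{\infty,1}^1}=\int_0^t\|v(\tau)\|_{B_{\infty,1}^1}d\tau$; a Gronwall argument closes it and yields $\|v\|_{L^1_tB_{\infty,1}^1}\le\Phi_1(t)$, hence $\int_0^t\|\nabla v\|_{L^\infty}d\tau\le\Phi_1(t)$. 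Plugging this back into the logarithmic estimate gives $\|\theta(t)\|_{B_{\infty,1}^0}\le\Phi_1(t)$, and then back into Step 1 gives $\|v\|_{\widetilde L^\rho_tB_{\infty,1}^1}\le\Phi_1(t)$ for all admissible $\rho$, using that \eqref{smooth1} holds for $\rho\in[1,\frac p2[$, so the constraint is $\rho\in[1,2[\cap[1,\frac p2[$.

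\textbf{Main obstacle.} The delicate point is that the bound \eqref{smooth1} on $\Gamma$ is an \emph{a priori} ingredient already proven only under the hypothesis $\theta^0\in L^\infty$ (Proposition \ref{prop600}), whereas here we only assume $\theta^0\in B_{\infty,1}^0\hookrightarrow L^\infty$, so that is fine. The real care is in the bookkeeping of the function-space exponents and in making sure the coupling is genuinely closeable — i.e. that the power of $\|v\|_{B_{\infty,1}^1}$ appearing after the logarithmic estimate is exactly $1$ (linear), not higher, so that Gronwall applies. This works precisely because $\theta$ solves a pure transport equation and Vishik/Proposition \ref{thmlog} gives a bound that is \emph{linear} in $\int\|\nabla v\|_{L^\infty}$; had there been a loss one would be stuck. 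One must also keep the time-weights $t^{1-1/\rho}$ under control, which is why the cleanest route is to run the Gronwall loop at $\rho=1$ and only afterwards upgrade to general $\rho\in[1,2[\cap[1,\frac p2[$ using \eqref{smooth1} and the (now finite) Lipschitz bound.
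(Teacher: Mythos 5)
Your overall strategy is exactly the paper's: reduce $v$ to $\omega=\Gamma+\mathcal{R}\theta$, use the smoothing bound \eqref{smooth1} on $\Gamma$, feed the Vishik-type estimate of Proposition \ref{thmlog} (linear in $\int_0^t\|\nabla v\|_{L^\infty}$) for $\theta$ back into the vorticity bound, close at $\rho=1$ by Gronwall, and then upgrade to general $\rho\in[1,2[\cap[1,\tfrac p2[$. However, the Gronwall step as you have written it does not close. You arrive at
$$
Y(t)\;\le\;\Phi_1(t)\bigl(1+Y(t)\bigr),\qquad Y(t):=\|v\|_{L^1_tB_{\infty,1}^1},
$$
which is vacuous since $\Phi_1(t)\ge 1$: this is not a Gronwall inequality, and no Gronwall argument applies to it. The error is that you pulled the supremum $\|\theta\|_{\widetilde L^\infty_tB_{\infty,1}^0}$ out of the time integral before substituting the bound of Proposition \ref{thmlog}. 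The correct bookkeeping (and the paper's) is to keep $\|\theta(\tau)\|_{B_{\infty,1}^0}$ under the $d\tau$ integral and insert the $\tau$-dependent bound $\|\theta(\tau)\|_{B_{\infty,1}^0}\le C_0\bigl(1+\int_0^\tau\|\nabla v(\sigma)\|_{L^\infty}d\sigma\bigr)\le C_0(1+Y(\tau))$ there, which yields
$$
Y(t)\;\le\;\Phi_1(t)+C_0\int_0^t Y(\tau)\,d\tau,
$$
a genuine linear Gronwall inequality (the paper runs it with $X(t)=\int_0^t\|\omega(\tau)\|_{B_{\infty,1}^0}d\tau$ rather than $Y$, which is equivalent). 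With this correction the rest of your argument, including the upgrade to $\rho>1$ via H\"older in time, goes through.

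A second, smaller point: your Step 1 asserts $\|\mathcal{R}\theta\|_{B_{\infty,1}^0}\lesssim\|\theta\|_{B_{\infty,1}^0}$ by Proposition \ref{cor1}(2). That proposition only gives uniform $L^\infty$ bounds for $\mathcal{R}\Delta_q$ with $q\in\NN$ (spectrum in an annulus away from the origin); it says nothing about $\Delta_{-1}\mathcal{R}\theta$, and indeed $\mathcal{R}$ is not bounded on $L^\infty$ at low frequencies. You must treat the block $q=-1$ separately, e.g.\ $\|\Delta_{-1}\mathcal{R}\theta\|_{L^\infty}\lesssim\|\Delta_{-1}\mathcal{R}\theta\|_{L^2}\lesssim\|\theta\|_{L^2}=\|\theta^0\|_{L^2}$ by Bernstein and the $L^2$-continuity of $\mathcal{R}$, so the correct estimate is $\|\mathcal{R}\theta\|_{B_{\infty,1}^0}\lesssim\|\theta^0\|_{L^2}+\|\theta\|_{B_{\infty,1}^0}$. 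This extra conserved term is harmless for the argument but cannot be omitted.
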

\begin{proof}[Proof of Proposition \ref{prons1}]
 Using the definition of $\Gamma$ and \eqref{smooth1} for $\rho=1$ we get
\begin{eqnarray}
\label{tite0}
\nonumber\|\omega\|_{ L^1_tB_{\infty,  1}^0}&\leq&\|\Gamma\|_{ L^1_tB_{\infty,  1}^0}+\|\mathcal{R}\theta\|_{ L^1_tB_{\infty,  1}^0}\\
&\le&\Phi_1(t)+\|\mathcal{R}\theta\|_{L^1_tB_{\infty,  1}^0}.
\end{eqnarray}
Now from Bernstein inequality,   Proposition \ref{cor1} and Proposition  \ref{es-energ},   we find
\begin{eqnarray}
\nonumber
\nonumber\|\mathcal{R}\theta(t)\|_{B_{\infty,  1}^0}&\le&\|\Delta_{-1}\mathcal{R}\theta(t)\|_{L^\infty}+\|\theta(t)\|_{ B_{\infty,  1}^0}
\\
\label{tite1}
&\lesssim& \|\theta^0\|_{L^2}+\|\theta(t)\|_{B_{\infty,  1}^0}.
\end{eqnarray}
By applying  Proposition \ref{thmlog} to the second equation of \eqref{ns} we get 
\begin{eqnarray}
\label{tite2}
\|\theta\|_{\widetilde L^\infty_tB_{\infty,  1}^0}\le \|\theta^0\|_{B_{\infty,  1}^0}\Big(1+\|\nabla v\|_{L^1_tL^\infty} \Big).
\end{eqnarray}
However,   Bernstein inequality and the estimate $2^q\| \Delta_qv\|_{L^\infty}\approx\|\Delta_q \omega\|_{L^\infty}$ for very $q\in\NN$ yield together
\begin{eqnarray}
\nonumber
\|v\|_{L^1_tB_{\infty,  1}^1} &\lesssim&  \|\Delta_{-1}v\|_{L^1_tL^\infty}+\|\omega\|_{L^1_tB_{\infty,  1}^0}
\\ 
\nonumber&\lesssim& \|v\|_{L^1_tL^2}+\|\omega\|_{L^1_tB_{\infty,  1}^0}
\\
\label{tite3}
&\le&C_0(1+t^2)+C_0\int_0^t\|\omega(\tau)\|_{B_{\infty,  1}^0}d\tau.
\end{eqnarray}
In the last line we have used Proposition \ref{es-energ}. 

We set $\displaystyle{X(t)=\int_0^t\|\omega(\tau)\|_{B_{\infty,  1}^0}d\tau}.$
Combining  \eqref{tite0},   \eqref{tite1},   \eqref{tite2} and \eqref{tite3}  we get
\begin{eqnarray*}
X(t)
\le \Phi_1(t)+C_0\int_0^tX(\tau)d\tau.
\end{eqnarray*}
 and the Gronwall inequality  yields
$$
\int_0^t\|\omega(\tau)\|_{B_{\infty,  1}^0}d\tau\le \Phi_1(t).
$$
Form the above  estimate and    \eqref{tite3} we  also get 
\begin{eqnarray*}
\nonumber
\| v\|_{L^1_tB_{\infty,  1}^1} &\lesssim& C_0(1+t^2)+  C_0X(t)
\\ 
\nonumber
&\le&\Phi_1(t).
\end{eqnarray*}
By combining this estimate with \eqref{tite1},   we find
\begin{equation*}
\|\theta\|_{\widetilde L^\infty_tB_{\infty,  1}^0}\le \Phi_1(t).
\end{equation*}
This leads to
\begin{equation}
\label{tites1}
\|\mathcal{R}\theta\|_{\widetilde L^\infty_tB_{\infty,  1}^0}\le \Phi_1(t).
\end{equation}
Finally \eqref{smooth1} yields for  every $\rho\in [1,  \frac p2[$
\begin{eqnarray*}
\nonumber\|\omega\|_{\widetilde L^\rho_tB_{\infty,  1}^0}&\leq&\|\Gamma\|_{\widetilde L^\rho_tB_{\infty,  1}^0}+\|\mathcal{R}\theta\|_{\widetilde L^\rho_tB_{\infty,  1}^0}\\
&\le&\Phi_1(t)+\|\mathcal{R}\theta\|_{\widetilde L^\rho_tB_{\infty,  1}^0}.
\end{eqnarray*}
By using H\"older inequality and \eqref{tites1} we find
\begin{eqnarray*}
\|\mathcal{R}\theta\|_{\widetilde L^\rho_tB_{\infty,  1}^0} &\lesssim& t^{\frac1\rho}\|\mathcal{R}\theta\|_{ \widetilde L^\infty_tB_{\infty,  1}^0}
\\
&\leq&\Phi_1(t).
\end{eqnarray*}
It follows that
$$
\|\omega\|_{\widetilde L^\rho_tB_{\infty,  1}^0}\le\Phi_1(t),  
$$
and then 
$$
\|v\|_{\widetilde L^\rho_tB_{\infty,  1}^1}\le\Phi_1(t).
$$

\end{proof}

It remains finally  to propagate  the  $L^p$  norm of the vorticity (when $p\geq 4$).
\begin{prop}
\label{lpnorm}
Under the hypotheses  of \mbox{Proposition \ref{prons1}}  we have $$
\|\omega(t)\|_{L^p}\le\Phi_2(t),  
$$
for every  $t\in\RR_+.$
\end{prop}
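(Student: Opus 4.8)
The plan is to propagate $\|\omega(t)\|_{L^p}$ through the diagonalized quantity $\Gamma=\omega-\mathcal{R}\theta$, exactly as in Propositions \ref{prop500} and \ref{prop600}: the vorticity equation itself carries the uncontrolled term $\partial_1\theta$, whereas $\Gamma$ solves the forced transport-diffusion equation \eqref{eq-com} whose source $[\mathcal{R},v\cdot\nabla]\theta$ is now amenable to an $L^p$ bound. We work under the hypotheses of Proposition \ref{prons1}, so we may freely use the estimates already obtained: $\|\theta(t)\|_{B_{\infty,1}^0}\le\Phi_1(t)$ (a consequence of the Lipschitz control of $v$ established there), $\|v(t)\|_{L^2}\le C_0(1+t)$ from Proposition \ref{es-energ}, the conservation $\|\theta(t)\|_{L^2}=\|\theta^0\|_{L^2}$, and $\|\theta(t)\|_{L^p}=\|\theta^0\|_{L^p}$, which is finite since $B_{\infty,1}^0\hookrightarrow L^\infty$ forces $\theta^0\in L^2\cap L^\infty\subset L^p$ for $p\ge2$. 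Note also that $\Gamma^0=\omega^0-\mathcal{R}\theta^0\in L^p$, because $v^0\in\dot W^{1,p}$ and $\mathcal{R}$ is bounded on $L^p$.

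First I would perform the $L^p$ energy estimate on \eqref{eq-com}: multiplying by $|\Gamma|^{p-2}\Gamma$, integrating in space, killing the convection term by $\textnormal{div}\,v=0$ and discarding the dissipative contribution $\int_{\RR^2}(|\DD|\Gamma)|\Gamma|^{p-2}\Gamma\,dx\ge\frac2p\||\Gamma|^{p/2}\|_{\dot H^{1/2}}^2\ge0$ (Lemma 3.3 in \cite{Ju}), one is left with
$$\frac{d}{dt}\|\Gamma(t)\|_{L^p}\le\big\|[\mathcal{R},v\cdot\nabla]\theta(t)\big\|_{L^p}.$$
The core step is to bound the commutator in $L^p$. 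For this I would revisit the Bony-decomposition proof of Theorem \ref{propcom}(2) and sum the dyadic pieces with $\ell^1$ weights rather than $\ell^\infty$ ones — this is precisely where the hypothesis $\theta\in B_{\infty,1}^0$ (rather than merely $B_{\infty,\infty}^0$) enters — which upgrades the conclusion to
$$\big\|[\mathcal{R},v\cdot\nabla]\theta\big\|_{L^p}\le\big\|[\mathcal{R},v\cdot\nabla]\theta\big\|_{B_{p,1}^0}\lesssim_p\|\nabla v\|_{L^p}\,\|\theta\|_{B_{\infty,1}^0}+\|v\|_{L^2}\|\theta\|_{L^2}.$$
Each of the three blocks $\mathrm I,\mathrm{II},\mathrm{III}$ is handled as in that proof (the paraproduct terms via Proposition \ref{cor1} and Lemma \ref{commu}, the low-frequency remainder via Bernstein and the $L^2$-continuity of $\mathcal{R}$), the only change being to sum $\|\mathrm I_q\|_{L^p}$, etc., over $q$ and apply discrete Young's inequality to the resulting convolutions. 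I expect this $B_{p,\infty}^0\to B_{p,1}^0$ refinement of the commutator estimate to be the one genuinely technical point.

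With this bound in hand, using $\|\nabla v\|_{L^p}\lesssim\|\omega\|_{L^p}\le\|\Gamma\|_{L^p}+\|\mathcal{R}\theta\|_{L^p}\lesssim\|\Gamma\|_{L^p}+\|\theta^0\|_{L^p}$ together with the a priori bounds recalled above, the differential inequality collapses to
$$\frac{d}{dt}\|\Gamma(t)\|_{L^p}\le\Phi_1(t)\big(1+\|\Gamma(t)\|_{L^p}\big).$$
Integrating in time, using $\|\Gamma^0\|_{L^p}\le C_0$ and $\int_0^t\Phi_1(\tau)d\tau\le\Phi_1(t)$, then applying Gronwall's lemma together with $\exp\!\big(\int_0^t\Phi_1(\tau)d\tau\big)\le\Phi_2(t)$, we get $\|\Gamma(t)\|_{L^p}\le\Phi_2(t)$, whence
$$\|\omega(t)\|_{L^p}\le\|\Gamma(t)\|_{L^p}+\|\mathcal{R}\theta(t)\|_{L^p}\lesssim\Phi_2(t)+\|\theta^0\|_{L^p}\le\Phi_2(t),$$
which is the claim. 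The passage from the single exponential $\Phi_1$ of the earlier propositions to the double exponential $\Phi_2$ here is exactly the cost of this last Gronwall step, in which $\|\Gamma\|_{L^p}$ re-enters its own bound through $\|\nabla v\|_{L^p}$.
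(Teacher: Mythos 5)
Your argument is correct and follows essentially the same route as the paper: both work with $\Gamma=\omega-\mathcal{R}\theta$, bound $\|\Gamma(t)\|_{L^p}$ by the time integral of the commutator (the paper invokes Proposition \ref{thmlog}, you rederive the same inequality by the $L^p$ energy method), use the $\ell^1$-summed refinement $\|[\mathcal{R},v\cdot\nabla]\theta\|_{B_{p,1}^0}\lesssim\|\nabla v\|_{L^p}\|\theta\|_{B_{\infty,1}^0}+\dots$ (which the paper cites from \cite{HKR1} and you correctly sketch from Theorem \ref{propcom}(2)), and close with $\|\omega\|_{L^p}\lesssim\|\Gamma\|_{L^p}+\|\theta^0\|_{L^p}$ and Gronwall. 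The only difference is that you prove the two cited ingredients inline rather than quoting them, which is harmless.
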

\begin{proof}[Proof of Proposition \ref{lpnorm}]
Recall that the quantity $\Gamma=\omega-\mathcal{R}\theta$ satisfies
$$
\partial_t\Gamma+v\cdot\nabla\Gamma+|\DD|\Gamma=[\mathcal{R},  v\cdot\nabla]\theta.
$$
Using  Proposition \ref{thmlog} we find
$$
\|\Gamma(t)\|_{L^p}\leq\|\Gamma^0\|_{L^p}+\int_0^t\big\|[\mathcal{R},  v\cdot\nabla]\theta(\tau)\big\|_{L^p}d\tau.
$$
Recall now the following  commutator result proven in \cite{HKR1}: for $p\in[2,  \infty[$ we have 
$$
\big\|[\mathcal{R},  v\cdot\nabla]\theta\big\|_{B_{p,  1}^0}\lesssim\|\nabla v\|_{L^p}\big(\|\theta\|_{B_{\infty,  1}^0}+\|\theta\|_{L^p}\big)
$$
It follows from the  Calder\`on-Zygmund  Theorem  and Proposition \ref{prons1} that 
$$
\big\|[\mathcal{R},  v\cdot\nabla]\theta(t)\big\|_{B_{p,  1}^0}\lesssim\Phi_1(t)\|\omega(t)\|_{L^p}.
$$
On the other hand we have
\begin{eqnarray*}
\|\omega(t)\|_{L^p}&\le& \|\Gamma(t)\|_{L^p}+\|\mathcal{R}\theta(t)\|_{L^p}\\
&\lesssim& \|\Gamma(t)\|_{L^p}+\|\theta^0\|_{L^p}.
\end{eqnarray*}
By putting together these estimates,   we find 
$$
\|\omega(t)\|_{L^p}\leq\|\omega^0\|_{L^p}+\|\theta^0\|_{L^p}+\int_0^t\Phi_1(\tau)\|\omega(\tau)\|_{L^p}d\tau.
$$
It suffices  to use  the Gronwall inequality to end the proof.
\end{proof}
\begin{Rema} Taking this estimate  into account,   we can also trivially extend  the results of 
 Proposition \ref{gggg} and Proposition \ref{prons1} to every $\rho\in [1,  \frac p2[$.
\end{Rema}


\subsection{Uniqueness}

We  will now  prove a uniqueness result for the system  \eqref{ns}  in the following  space 
$$
\mathcal{X}_T:=L^\infty_TH^1\cap  L^1_T{B_{\infty,  1}^1}\times  L^\infty_T\big(L^2\cap B_{\infty,  1}^0\big).
$$
Let $(v^i,  \theta^i) $ two solutions of the system \eqref{ns} with initial data $(v_i^0,  \theta_i^0)$ 
and lying in the space $\mathcal{X}_T$. We set $v=v^1-v^2$,   $\theta=\theta^1-\theta^2$. Then
\begin{eqnarray*}
\partial_t v+v^2\cdot\nabla v+|\DD| v+\nabla p&=&-v\cdot\nabla v^1+\theta e_2\\
\partial_t\theta+v^2\cdot\nabla \theta&=&-v\cdot\nabla\theta^1.
\end{eqnarray*}
To estimate $v$,   we shall use Proposition  \ref{thm99}.
 By considering the equation for $v$ as a linear equation with a right hand-side which is made
  of the sum of  two terms,   we can write $v=V_{1}+ V_{2}$ where $V_{i}$ solves
  $$ \partial_t V_{i}+v^2\cdot\nabla V_{i}+|\DD| V_{i}+\nabla p_{i}= F_{i}$$
   with $F_{1}= - v \cdot \nabla v^1,   $ and $F_{2}  =  \theta e_{2}$.
   To estimate $V_{1}$,   we use Proposition \ref{thm99} for  $\rho=1$  and $s=0$
    while to estimate $V_{2}$,   we use  Proposition \ref{thm99} for $\rho=+\infty$,   $s=0$.
This yields  for every $t\in [0,  T]$
\begin{equation}
\label{log34}
\|v(t)\|_{B_{2,  \infty}^0}\lesssim  
e^{CV_2(t)}\Big(\|v^0\|_{B_{2,  \infty}^0}+\|v\cdot\nabla v^1\|_{L^1_tB_{2,  \infty}^{0}}+
\|\theta\|_{ L^\infty_tB_{2,  \infty}^{-1}}(1+t) \Big).
\end{equation}
From Lemma \ref{lemproduit} in  the appendix we get
\begin{eqnarray*}
 \|v\cdot\nabla v^1\|_{B_{2,  \infty}^{0}}\lesssim\|v^1\|_{B_{\infty,  1}^1}\|v \|_{L^2}.
\end{eqnarray*}
Now,   by  using  the logarithmic interpolation inequality of Lemma \ref{log-inter} combined with easy computations,  
 we find 
\begin{eqnarray*}
\|v\|_{L^2}&\lesssim& \|v\|_{B_{2,  \infty}^0}\log\Big(e+\frac{\|v\|_{H^1}}{\|v\|_{B_{2,  \infty}^0}}\Big)\\
&\lesssim& \|v\|_{B_{2,  \infty}^0}\log\Big(e+\frac{1}{\|v\|_{B_{2,  \infty}^0}}\Big) \log\big(e+\|v\|_{H^1}\big).
\end{eqnarray*}
Thus we get
\begin{eqnarray}
\label{55}
 \|v\cdot\nabla v^1\|_{B_{2,  \infty}^{0}}\lesssim \|v^1\|_{B_{\infty,  1}^1}\log\big(e+\|v\|_{H^1}\big)\mu(\|v\|_{B_{2,  \infty}^0}) .
\end{eqnarray}
where $\mu(x)=x\log(e+1/x).$
On the other hand,   applying  Proposition \ref{Besov-pro} with $p=2$ to    $\theta$ yields
\begin{equation}
\label{thetaunique2}
\|\theta\|_{ L^\infty_tB_{2,  \infty}^{-1}}\lesssim e^{C\|v^2\|_{L^1_tB_{\infty,  1}^1}}\Big(\|\theta^0\|_{B_{2,  \infty}^{-1}}+\int_0^t\|v\cdot\nabla\theta^1(\tau)\|_{B_{2,  \infty}^{-1}}d\tau\Big).
\end{equation}
To estimate the right hand-side,   we use the following product estimate (see Lemma \ref{lemproduit} in  the appendix)
\begin{equation*}
\|v\cdot\nabla\theta^1\|_{B_{2,  \infty}^{-1}}\lesssim \|v\|_{L^2}\|\theta^1\|_{B_{\infty,  1}^{0}}.
\end{equation*}
The combination of  this estimate with Lemma \ref{log-inter} yield
\begin{equation}
\label{thetaunique1}
\|v\cdot\nabla\theta^1\|_{B_{2,  \infty}^{-1}}\lesssim \|\theta^1\|_{B_{\infty,  1}^{0}}
\log\big(e+\|v\|_{H^1}\big)\mu(\|v\|_{B_{2,  \infty}^0}).
\end{equation}
We set $X(t)=\|\theta\|_{ L^\infty_tB_{2,  \infty}^{-1}}+\|v\|_{L^\infty_tB_{2,  \infty}^0}$.
 Putting together (\ref{log34}),   (\ref{55}),  (\ref{thetaunique2})  and (\ref{thetaunique1}) gives
\begin{equation*}
X(t)\le f(t)\Big(X(0)+\int_0^t\|v^1(\tau)\|_{B_{\infty,  1}^1}\mu(X(\tau))d\tau\Big),  
\end{equation*}
with $f$ a known function depending continuously and increasingly on the quantities 
 $\|(v^i,  \theta^i)\|_{\mathcal{X}_t}$  and on the variable time. Now from Lemma \ref{osgood} we get the uniqueness. 

Finally,   let us now give some quantified estimates that will be used later for the 
construction of the solutions. Applying \mbox{Remark \ref{osg23}}  we get 
\begin{equation}
\label{uniqos}
X(0) \le \alpha(T)\Longrightarrow X(t)\le \beta(T)\big(X(0)\big)^{\gamma(T)},  
\end{equation}
 where $\alpha, \beta,\gamma$ are explicit functions  depending continuously  on $\|(v^i,  \theta^i)\|_{\mathcal{X}_T}$  and $T$.

\subsection{Existence}

We consider the following system
\begin{equation} 
\left\{ \begin{array}{ll} 
\partial_{t}v_n+v_n\cdot\nabla v_n+|\DD| v_n+\nabla p_n=\theta_n e_{2}\\ 
\partial_{t}\theta_n+v_n\cdot\nabla\theta_n=0\\
\textnormal{div}v_n=0\\
{v_n}_{| t=0}=S_nv^{0},   \quad {\theta_n}_{| t=0}=S_n\theta^{0}  
\end{array} \right. \tag{B$_{n}$}
\end{equation}
First remark that $S_n v^0,  S_n\theta^0\in H^s,   \forall s\in\RR$  since $v^0,  \theta^0 \in L^2.$ 
 As in the classical theory of  quasi-linear hyperbolic systems,   
  we can prove the local well-posedness of the system (${\rm B}_n$). The global well-posedness is related to the following criterion: the solution can be continued beyond the time $T$ if the quantity $\|\nabla v_n\|_{L^1_T L^\infty}$ is finite. Now from the {\it a priori} estimates,    in particular Proposition \ref{prons1},   the Lipschitz norm
   of the velocity  can not blow up in finite time and  hence  the solution $(v_n,  \theta_n)$ is globally defined. Once again from the {\it a priori} estimates   we have for $1\le\rho<{p}/{2}$ 
$$
\|v_n\|_{L^\infty_T (H^1\cap\dot{W}^{1,  p})}+\|v_n\|_{ L^\rho_T B_{\infty,  1}^1}\leq \Phi_2(T),
$$
and
$$
\|\theta_n\|_{L^\infty_T(L^2\cap B_{\infty,  1}^{0})}\leq \Phi_2(T).
$$
It follows that  up  to the extraction of a subsequence $(v_n,  \theta_n)$ is weakly convergent to $(v,  \theta)$ satisfying the same estimate as above.
Now using (\ref{uniqos})  we get the following:  if we have $$a_{n,  m}=\|(S_n-S_m) v^0\|_{B_{2,  \infty}^0}+\|(S_n-S_m )\theta^0\|_{B_{2,  \infty}^{-1}}\le \alpha(T)
 $$
  then
\begin{eqnarray*}
\|v_n-v_m\|_{L^\infty_TB_{2,  \infty}^0}+\|\theta_n-\theta_m\|_{L^\infty_T B_{2,  \infty}^{-1}}\leq \beta(T)\big(a_{n,  m}\big)^{\gamma(T)}.  
\end{eqnarray*}
This proves that $(v_n)$ is a Cauchy sequence and hence that  it converges strongly to $v$ in the space $L^\infty_TB_{2,  \infty}^0.$ 
By interpolation   we can easily get   the strong convergence of $v_{n}$ to $v$  in $ L^2([0,T]\times\RR^2)$. This  implies that
 $ v_{n} \otimes v_{n}$ converges  in $ L^1([0,T]\times\RR^2)$. But since
 $\theta_n$ converges to $\theta$ weakly in $ L^2([0,T]\times\RR^2)$ then, by weak strong convergence,  we have also that
   $ v_{n}\, \theta_{n}$ converges weakly  to $v\, \theta$.

This allows us to pass to the limit in the system $({\rm B}_n)$ and  to  get that $(v,  \theta)$ is a solution of our initial problem.
\section*{Appendix: some technical lemmas}
\label{technical}
Here we restate and prove Lemma \ref{propDelta}.
\begin{lemm}
\item Let    $\beta\in [2,  +\infty[$,   $s\in]0,  1[$ and $u\in L^{2\beta}\cap\dot{H}^{s+1-\frac2\beta}$. Then we have
\begin{equation*}
\||u|^{\beta-2}u\|_{\dot{H}^s}\lesssim\|u\|_{L^{2\beta}}^{\beta-2}\|u\|_{\dot{H}^{s+1-\frac2\beta}}.
\end{equation*}
\end{lemm}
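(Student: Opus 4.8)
I would set $g:=|u|^{\beta-2}u=F(u)$ with $F(z)=|z|^{\beta-2}z$ and estimate $\|g\|_{\dot H^s}$ by means of the finite-difference characterisation
$$\|g\|_{\dot H^s}^2\approx\int_{\RR^2}\frac{\|g(\cdot+h)-g\|_{L^2}^2}{|h|^{2+2s}}\,dh,$$
which is available precisely because $s\in]0,1[$ (the underlying dimension is $d=2$, which is also why the target exponent is $s+1-\tfrac{2}{\beta}$). One may assume $\beta>2$, the case $\beta=2$ being trivial since then $g=u$ and $s+1-\tfrac{2}{\beta}=s$. For $\beta\ge2$ the map $F$ is $C^1$ with $|F'(z)|\lesssim|z|^{\beta-2}$, so integrating $F'$ along the segment joining $b$ to $a$ gives the pointwise bound
$$|F(a)-F(b)|\lesssim\bigl(|a|^{\beta-2}+|b|^{\beta-2}\bigr)\,|a-b|;$$
this is the only genuinely quantitative input, the rest being bookkeeping with Hölder's inequality and a Sobolev embedding.

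Next I would apply this bound with $a=u(x+h)$, $b=u(x)$, then Hölder's inequality in the $x$ variable with the conjugate exponents $\tfrac{\beta}{\beta-2}$ and $\tfrac{\beta}{2}$ (both $\ge1$ since $\beta>2$), together with the translation invariance of Lebesgue norms, to get
$$\|g(\cdot+h)-g\|_{L^2}^2\lesssim\bigl\|\,|u(\cdot+h)|^{\beta-2}+|u|^{\beta-2}\,\bigr\|_{L^{\beta/(\beta-2)}}^2\,\|u(\cdot+h)-u\|_{L^\beta}^2\lesssim\|u\|_{L^{2\beta}}^{2(\beta-2)}\,\|u(\cdot+h)-u\|_{L^\beta}^2.$$
Dividing by $|h|^{2+2s}$ and integrating in $h$, the surviving factor is, up to a constant, the homogeneous Besov seminorm $\|u\|_{\dot B^s_{\beta,2}}^2$ (again because $s\in]0,1[$ and $2\le\beta<\infty$), which yields
$$\|\,|u|^{\beta-2}u\,\|_{\dot H^s}\lesssim\|u\|_{L^{2\beta}}^{\beta-2}\,\|u\|_{\dot B^s_{\beta,2}}.$$

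Finally I would absorb the Besov factor through the embedding $\dot H^{s+1-2/\beta}=\dot B^{s+1-2/\beta}_{2,2}\hookrightarrow\dot B^s_{\beta,2}$, valid since $2\le\beta$ and the two spaces share the same scaling exponent, $\bigl(s+1-\tfrac{2}{\beta}\bigr)-1=s-\tfrac{2}{\beta}$; this embedding follows from Bernstein's inequality (Lemma \ref{lb}) applied dyadic block by dyadic block. Combining the last two displays gives the stated inequality. I expect the only delicate point to be the choice of exponents in the Hölder step — it must be arranged so that the $|u|^{\beta-2}$ factors land exactly in $L^{2\beta}$ and the increment of $u$ exactly in $L^\beta$ — together with checking that the scaling indices match in the concluding embedding; a Littlewood--Paley/Bony decomposition of $F(u)$ would give an alternative route but is noticeably more cumbersome.
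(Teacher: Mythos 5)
Your argument is correct and is essentially the paper's own proof: the same finite-difference characterisation of $\dot H^s$ for $s\in]0,1[$, the same pointwise bound on $|a|^{\beta-2}a-|b|^{\beta-2}b$, the same H\"older step yielding the intermediate estimate $\||u|^{\beta-2}u\|_{\dot H^s}\lesssim\|u\|_{L^{2\beta}}^{\beta-2}\|u\|_{\dot B^s_{\beta,2}}$, and the same concluding embedding $\dot H^{s+1-2/\beta}\hookrightarrow\dot B^s_{\beta,2}$ via Bernstein. The only blemish is a typo in your H\"older display: the norm on the factor $|u(\cdot+h)|^{\beta-2}+|u|^{\beta-2}$ should be $L^{2\beta/(\beta-2)}$ (the exponent conjugate to $\beta$ relative to $L^2$), not $L^{\beta/(\beta-2)}$, which is what your stated conjugate pair $\bigl(\tfrac{\beta}{\beta-2},\tfrac{\beta}{2}\bigr)$ applied to the squares and your final bound $\|u\|_{L^{2\beta}}^{2(\beta-2)}$ actually require.
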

\begin{proof}
We shall actually establish the  more accurate estimate:
\begin{equation*}
\||u|^{\beta-2}u\|_{\dot{H}^s}\le C\|u\|_{L^{2\beta}}^{\beta-2}\|u\|_{\dot{B}_{\beta,  2}^s}.
\end{equation*}
Once this estimate is established,   the result follows from   the embedding \mbox{$\dot{H}^{s+1-\frac2\beta}\hookrightarrow\dot{B}_{\beta,  2}^s,  $} for $\beta\geq2$ which is an easy consequence of Bernstein inequalities.
For $0<s<1$,    we can use the characterization of the homogeneous Sobolev space $\dot H^s,  $
\begin{equation}\label{chara}
\big\||u|^{\beta-2}u\big\|_{\dot{H}^s}^2\approx\int_{\RR^2}\frac{\||u|^{\beta-2}u(x-\cdot)-|u|^{\beta-2}u(\cdot)\|_{L^2}^2}{|x|^{2+2s}}dx.
\end{equation}
On the other hand there exists $C$ depending on $\beta$ such that  for every $a,  b\in\RR$
$$
\big||a|^{\beta-2} a-|b|^{\beta-2}b \big|\leq C|a-b|\big(|a|^{\beta-2}+|b|^{\beta-2}\big).
$$
Thus using this inequality and  integrating in $y$ we get by Cauchy-Schwarz inequality 
$$
\||u|^{\beta-2}u(x-\cdot)-|u|^{\beta-2}u(\cdot)\|_{L^2}\leq C\|u(x-\cdot)-u(\cdot)\|_{L^\beta}\|u\|_{L^{2\beta}}^{\beta-2}.
$$
Inserting this estimate into (\ref{chara}) and using the characterization of Besov space leads to
\begin{eqnarray*}
\||u|^{\beta-2}u\|_{\dot{H}^s}^2&\lesssim& \|u\|_{L^{2\beta}}^{2\beta-4}\int_{\RR^2}\frac{\|u(x-\cdot)-
u(\cdot)\|_{L^\beta}^2}{|x|^{2+2s}}dx\\
&\lesssim&\|u\|_{L^{2\beta}}^{2\beta-4}\|u\|_{\dot{B}_{\beta,  2}^s}^2.
\end{eqnarray*}
This concludes the proof.
\end{proof}

\begin{lemm}[Commutators estimates]
\label{lemproduit}
Let $v$ be a smooth divergence-free vector field and $f$ be a smooth function then
\begin{enumerate}
\item for every $q\geq -1$
$$
\|[\Delta_q,   v\cdot\nabla]f\|_{L^p}\lesssim \|\nabla v\|_{L^p}\|f\|_{B_{\infty,  \infty}^{0}}.
$$
\item For every  $s\in [-1,  0]$
$$
\|v\cdot\nabla f\|_{B_{2,  \infty}^{s}}\lesssim\|v \|_{L^2}\|f\|_{B_{\infty,  1}^{1+s}}.
$$
\end{enumerate}
\end{lemm}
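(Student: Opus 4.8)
The plan is to prove both bounds with Bony's decomposition, the one extra ingredient being the divergence-free condition, which turns the remainder part of $v\cdot\nabla f$ into a full divergence and so recovers one derivative. Both estimates are, in spirit, $L^p$/$L^2$ counterparts of the classical transport commutator estimates, with the same structure as those established in \cite{HKR1}.

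\emph{Part (1).} Write $v\cdot\nabla f=\sum_j v^j\partial_j f$ and decompose each product following Bony; then $[\Delta_q,v\cdot\nabla]f$ is the sum of three contributions, each paired with the corresponding piece of $v\cdot\nabla\Delta_q f$. On the paraproduct $\sum_j T_{v^j}\partial_j f$ one replaces $S_{k-1}v$ by $S_{q-1}v$ plus a bounded number of correction blocks; since $S_{k-1}v^j\,\Delta_k\partial_j f$ has annular spectrum of size $2^k$, only the frequencies $|k-q|\le N_0$ survive the outer $\Delta_q$, and the leading term equals $[\Delta_q,S_{q-1}v\cdot\nabla]\bar\Delta_q f$, with $\bar\Delta_q:=\sum_{|k-q|\le N_0}\Delta_k$, to which Lemma \ref{commu} applies with the kernel $h_q$ of $\Delta_q$: the gain $\|xh_q\|_{L^1}=2^{-q}\|xh\|_{L^1}$ compensates the $2^k\sim 2^q$ coming from $\partial_j$, so this term is $\lesssim\|\nabla S_{q-1}v\|_{L^p}\|\Delta_q f\|_{L^\infty}\lesssim\|\nabla v\|_{L^p}\|f\|_{B_{\infty,\infty}^0}$, while the correction blocks $(S_{k-1}-S_{q-1})v$ are estimated directly by Bernstein. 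On the paraproduct $\sum_j T_{\partial_j f}v^j=\sum_k(\nabla S_{k-1}f)\cdot\Delta_k v$ one uses $\|\nabla S_{k-1}f\|_{L^\infty}\lesssim 2^k\|f\|_{B_{\infty,\infty}^0}$ (the geometric series being dominated by its top term) and $\|\Delta_k v\|_{L^p}\lesssim 2^{-k}\|\nabla v\|_{L^p}$; since $\nabla S_{k-1}f\cdot\Delta_k v$ is again spectrally supported in an annulus of size $2^k$, only $|k-q|\le N_0$ survive, and the subtracted term $\sum_{k\ge q+2}(\nabla S_{k-1}\Delta_q f)\cdot\Delta_k v$ is bounded by $\|\nabla\Delta_q f\|_{L^\infty}\sum_{k\ge q+2}\|\Delta_k v\|_{L^p}\lesssim 2^q\|f\|_{B_{\infty,\infty}^0}\,2^{-q}\|\nabla v\|_{L^p}$. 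Finally, for the remainder the identity $\sum_j R(v^j,\partial_j f)=\textnormal{div}\big(\sum_k\tilde\Delta_k f\,\Delta_k v\big)$, which uses $\textnormal{div}\,v=0$, puts a factor $2^q$ in front of blocks of size $2^k$ with $k\ge q-N_0$, leaving $\sum_{k\ge q-N_0}2^{q-k}\|\nabla v\|_{L^p}\|f\|_{B_{\infty,\infty}^0}\lesssim\|\nabla v\|_{L^p}\|f\|_{B_{\infty,\infty}^0}$; the case $q=-1$ is handled by the same decomposition.

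\emph{Part (2).} Rather than extracting a divergence from the whole of $v\cdot\nabla f$ (which would force $\|v\|_{H^1}$ on the right) I decompose $v\cdot\nabla f=T_v\cdot\nabla f+\sum_k(\nabla S_{k-1}f)\cdot\Delta_k v+\sum_k\Delta_k v\cdot\nabla\tilde\Delta_k f$ and estimate each term in $B^s_{2,\infty}$, $s\in[-1,0]$. For the first term $\|\Delta_q(T_v\cdot\nabla f)\|_{L^2}\lesssim\|v\|_{L^2}\,2^q\max_{|k-q|\le N_0}\|\Delta_k f\|_{L^\infty}$, so $2^{qs}\|\Delta_q(T_v\cdot\nabla f)\|_{L^2}\lesssim\|v\|_{L^2}\,2^{q(1+s)}\max_{|k-q|\le N_0}\|\Delta_k f\|_{L^\infty}\lesssim\|v\|_{L^2}\|f\|_{B_{\infty,1}^{1+s}}$. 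For the second term, $\nabla S_{k-1}f\cdot\Delta_k v$ has annular spectrum of size $2^k$, $\|\Delta_k v\|_{L^2}\le\|v\|_{L^2}$, and $\|\nabla S_{k-1}f\|_{L^\infty}\lesssim 2^{-ks}\|f\|_{B_{\infty,1}^{1+s}}$ — here $s\le 0$ is used, since $\sum_{m\le k-2}2^{m}\|\Delta_m f\|_{L^\infty}$ with $\|\Delta_m f\|_{L^\infty}\le 2^{-m(1+s)}c_m\|f\|_{B_{\infty,1}^{1+s}}$, $\sum_m c_m=1$, sums to $\approx 2^{-ks}\|f\|_{B_{\infty,1}^{1+s}}$ — whence $2^{qs}\|\Delta_q(\cdot)\|_{L^2}\lesssim\|v\|_{L^2}\|f\|_{B_{\infty,1}^{1+s}}$. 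For the remainder, $\textnormal{div}\,v=0$ gives $\sum_k\Delta_k v\cdot\nabla\tilde\Delta_k f=\textnormal{div}\big(\sum_k\tilde\Delta_k f\,\Delta_k v\big)$, and $\|\Delta_q\textnormal{div}(\tilde\Delta_k f\,\Delta_k v)\|_{L^2}\lesssim 2^q\|\tilde\Delta_k f\|_{L^\infty}\|v\|_{L^2}$; summing over $k\ge q-N_0$, using $1+s\ge 0$ and the $\ell^1$ summability of $f\in B_{\infty,1}^{1+s}$, gives $2^{qs}\|\cdot\|_{L^2}\lesssim\|v\|_{L^2}\|f\|_{B_{\infty,1}^{1+s}}$.

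The main obstacle is purely the spectral bookkeeping: one has to separate the pieces with \emph{annular} spectrum — where $\Delta_q$ meets only $O(1)$ dyadic blocks, so no decay in $q$ is needed — from those with \emph{ball} spectrum, where the derivative supplied by $\textnormal{div}\,v=0$ must first be exposed before a geometric series can be summed. In part (2) the restriction $s\in[-1,0]$ is exactly what keeps the two relevant geometric series ($\sum 2^{-ms}$, needing $-s\ge 0$, on the $T_{\nabla f}$-type term, and $\sum 2^{-k(1+s)}$, needing $1+s\ge 0$, on the remainder) controlled by their dominant term, which is also why the endpoint hypothesis $f\in B_{\infty,1}^{1+s}$ — whose $\ell^1$ summability bounds the truncations $S_{k-1}f$ uniformly in $k$ — is the natural one.
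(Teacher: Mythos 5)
Your overall route is the paper's: Bony's decomposition of $v\cdot\nabla f$, the first--order commutator Lemma \ref{commu} applied to the kernel $h_q$ of $\Delta_q$ on the paraproduct $T_v\cdot\nabla f$, and the divergence-free condition to turn the remainder into a full divergence. Part (2) reproduces the paper's argument term by term (including the roles of $s\le 0$ for the $T_{\nabla f}v$ piece and $1+s\ge 0$ for the remainder), and your direct Bernstein treatment of $T_{\nabla f}v$ in part (1) is a harmless variant of the paper's use of Lemma \ref{commu} on that term.

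The one step that fails as written is the low-frequency block of the remainder in part (1). Your bound $\sum_{k\ge q-N_0}2^{q-k}\|\nabla v\|_{L^p}\|f\|_{B_{\infty,\infty}^0}$ rests on $\|\Delta_k v\|_{L^p}\lesssim 2^{-k}\|\nabla v\|_{L^p}$, a reverse Bernstein inequality which requires annular spectral support and is false for $k=-1$: take $v$ a nonzero constant field, for which the left side is positive and the right side vanishes. (Indeed, for constant $v$ the whole commutator $[\Delta_q,v\cdot\nabla]f$ is identically zero, so any argument that discards the commutator structure on the $\Delta_{-1}v$ block cannot close with only $\|\nabla v\|_{L^p}$ on the right.) The fix is exactly the paper's term $\mathrm{III}_q^2$: isolate $[\Delta_q,\Delta_{-1}v\cdot\nabla]\widetilde{\Delta}_{-1}f$, which is nonzero only for finitely many $q$, keep it as a commutator, and apply Lemma \ref{commu} to get $\|xh_q\|_{L^1}\,\|\nabla\Delta_{-1}v\|_{L^p}\,\|\nabla\widetilde{\Delta}_{-1}f\|_{L^\infty}\lesssim\|\nabla v\|_{L^p}\|f\|_{B_{\infty,\infty}^0}$. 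Part (2) does not have this issue, since there the remainder is estimated with $\|\Delta_k v\|_{L^2}\le\|v\|_{L^2}$ and no derivative needs to be traded on $v$.
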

\begin{proof}

(1)   We shall actually  prove the refined estimate
$$
\|[\Delta_q,   v\cdot\nabla]\theta\|_{L^p}\lesssim \|\nabla v\|_{L^p}\|\theta\|_{B_{\infty,  \infty}^{0}}.
$$
The desired estimate will follow from the embedding  \mbox{$B_{p,  1}^{\frac2p}\hookrightarrow B_{\infty,  \infty}^0.$} We have from Bony's decomposition
\begin{eqnarray*}
\label{dec0}
[\Delta_q,   v\cdot\nabla]\theta&=&\sum_{|j-q|\le 4}[\Delta_q,   S_{j-1}v\cdot\nabla]
\Delta_j\theta+\sum_{|j-q|\le4}[\Delta_q,   \Delta_jv\cdot\nabla]S_{j-1}\theta\\
\nonumber&+&\sum_{j\geq q-4} [\Delta_q,   
\Delta_jv\cdot\nabla]\widetilde{\Delta}_j\theta\\
&:=& \mbox{I}_q+\mbox{II}_q+\mbox{III}_q.
\end{eqnarray*}
Observe first that 
 $$
\mbox{I}_q=\sum_{|j-q|\le 4}h_q\star( S_{j-1}v\cdot\nabla\Delta_j\theta)-S_{j-1}v\cdot(h_q\star \nabla\Delta_j\theta)
$$
where  $\hat h_q(\xi)=\varphi({2^{-q}}\xi)$. Thus,   Lemma \ref{commu} and Bernstein inequalities yield
\begin{eqnarray*}
\nonumber\|\mbox{I}_q\|_{L^p}&\lesssim&\sum_{|j-q|\le4}\|xh_q\|_{L^1}\|\nabla S_{j-1} 
v\|_{L^p}\|\nabla\Delta_j\theta\|_{L^\infty}\\
\nonumber
&\lesssim&\|\nabla  v\|_{L^p}\|h_0\|_{L^1}\sum_{|j-q|\le4}2^{j-q}\|\Delta_j\theta\|_{L^\infty}\\
&\lesssim&\|\nabla v\|_{L^p}\|\theta\|_{B_{\infty,  \infty}^0}.
\end{eqnarray*}
To estimate  the second term we use once again Lemma \ref{commu}
\begin{eqnarray*}
\|\hbox{II}_q\|_{L^p}&\lesssim&\sum_{|j-q|\le 4}2^{-q}\|\Delta_j\nabla v\|_{L^p}\|\nabla S_{j-1}\theta\|_{L^\infty}\\&\lesssim&
 \|\nabla v\|_{L^p}\sum_{|j-q|\le 4\atop k\le j-2}2^{k-q}\|\Delta_k\theta\|_{L^\infty}\\
&\lesssim&\|\nabla v\|_{L^p}\|\theta\|_{B_{\infty,  \infty}^0}.
\end{eqnarray*}
 Let us now move to the remainder term. We separate it into two terms: high frequencies and low frequencies.
\begin{eqnarray*}
\hbox{III}_q&=&\sum_{j\geq q-4\atop j\in\NN} [\Delta_q\partial_i,   \Delta_jv^i]\widetilde{\Delta}_j\theta+ [\Delta_q,   \Delta_{-1}v\cdot\nabla]\widetilde{\Delta}_{-1}\theta\\
&:=&\hbox{III}_q^1+\hbox{III}_q^2.
\end{eqnarray*}
For the first term we don't need to use the structure of the commutator. We estimate separately each term of the commutator by using Bernstein inequalities.
\begin{eqnarray*}
\|\hbox{III}_q^1\|_{L^p}&\lesssim& \sum_{j\geq q-4\atop j\in\NN} 2^q\|\Delta_jv\|_{L^p}\|\widetilde\Delta_j\theta\|_{L^\infty}\\
&\lesssim&
 \|\nabla v\|_{L^p}\sum_{j\geq q- 4}2^{q-j}\|\widetilde\Delta_j\theta\|_{L^\infty}\\
&\lesssim&\|\nabla v\|_{L^p}\|\theta\|_{B_{\infty,  \infty}^0}.
\end{eqnarray*}
For the second term we use Lemma \ref{commu} combined with Bernstein inequalities.
\begin{eqnarray*}
\|\hbox{III}_q^2\|_{L^p}&\lesssim&\|\nabla\Delta_{-1} v\|_{L^p}\|\nabla\widetilde\Delta_{-1}\theta\|_{L^\infty}\\
&\lesssim&\|\nabla v\|_{L^p}\|\theta\|_{B_{\infty,  \infty}^0}.
\end{eqnarray*}

(2)
According to  by Bony's decompostion and  $\hbox{div} v=0$
$$
v\cdot\nabla u=T_{v^i}\partial_iu+T_{\partial_iu} v^i+\partial_i\mathcal{R}(v^i,  u).
$$
To estimate the first term we write by definition
$$
2^{qs}\|\Delta_q(T_{v^i}\partial_iu)\|_{L^2}\lesssim\sum_{|j-q|\le 4}2^{j+qs}\|S_{j-1}v\|_{L^2}\|\Delta_ju\|_{L^\infty}.
$$
Now we use the inequality
$$
\|S_{j-1}v\|_{L^2}\le \|v\|_{L^2}.
$$
Thus we get
$$
\sup_{q}2^{qs}\|\Delta_q(T_{v^i}\partial_iu)\|_{L^2}\lesssim \|v\|_{L^2}\|u\|_{B_{\infty,  \infty}^{1+s}}.
$$
Straightforward calculus gives since $1+s\le0,  $
\begin{eqnarray*}
2^{qs}\|\Delta_q(T_{\partial_iu}v^i)\|_{L^2}&\lesssim&\sum_{|j-q|\le4\atop  k\le j-1}2^{j(1+s)}\|\Delta_k u\|_{L^\infty}\|\Delta_jv\|_{L^2}\\
&\lesssim&\|v\|_{L^2}\|u\|_{B_{\infty,  1}^{1+s}}.
\end{eqnarray*}
and
$$
\|\partial_i\mathcal{R}(v^i,  u)\|_{B_{2,  \infty}^{s}}\lesssim \|v\|_{L^2}\|u\|_{B_{\infty,  1}^{1+s}}.
$$

\end{proof}

\begin{lemm}\label{log-inter}
Let $v\in H^1$ then we have
$$
\|v\|_{L^2}\lesssim \|v\|_{B_{2,  \infty}^0}\log\Big(e+\frac{\|v\|_{H^1}}{\|v\|_{B_{2,  \infty}^0}}    \Big).
$$
\end{lemm}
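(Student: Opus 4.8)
The plan is to run a Littlewood--Paley decomposition of $v$ and to split the frequency sum at an optimally chosen threshold. Since $L^2=B_{2,2}^0$, we have $\|v\|_{L^2}^2\simeq\sum_{q\ge-1}\|\Delta_q v\|_{L^2}^2$. We may assume $\|v\|_{B_{2,\infty}^0}>0$, otherwise $v=0$ and there is nothing to prove. Fix an integer $N\ge1$ to be chosen later and split the sum into the low-frequency range $-1\le q<N$ and the high-frequency range $q\ge N$.

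For the low frequencies I would bound each block by the supremum, which gives $\sum_{-1\le q<N}\|\Delta_q v\|_{L^2}^2\le (N+1)\,\|v\|_{B_{2,\infty}^0}^2$. For the high frequencies I would insert the factor $2^{2q}2^{-2q}$ and use $\|v\|_{\dot H^1}^2\simeq\sum_q 2^{2q}\|\Delta_q v\|_{L^2}^2$, which yields $\sum_{q\ge N}\|\Delta_q v\|_{L^2}^2\le 2^{-2N}\sum_q 2^{2q}\|\Delta_q v\|_{L^2}^2\lesssim 2^{-2N}\|v\|_{H^1}^2$. Putting the two pieces together gives $\|v\|_{L^2}^2\lesssim N\,\|v\|_{B_{2,\infty}^0}^2+2^{-2N}\|v\|_{H^1}^2$.

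It then remains to optimize in $N$. Since $\Delta_q$ is bounded on $L^2$ uniformly in $q$, one has $\|v\|_{B_{2,\infty}^0}\le C\|v\|_{L^2}\le C\|v\|_{H^1}$, so the ratio $\|v\|_{H^1}/\|v\|_{B_{2,\infty}^0}$ is $\gtrsim 1$ and the logarithm appearing in the statement is bounded below by a positive absolute constant. I would then take $N$ to be the smallest integer for which $2^{-2N}\|v\|_{H^1}^2\le\|v\|_{B_{2,\infty}^0}^2$, that is $N\simeq\log_2\!\big(e+\|v\|_{H^1}/\|v\|_{B_{2,\infty}^0}\big)$ (this $N$ is $\ge1$ by the previous remark). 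With this choice the bound becomes $\|v\|_{L^2}^2\lesssim\|v\|_{B_{2,\infty}^0}^2\,\log\!\big(e+\|v\|_{H^1}/\|v\|_{B_{2,\infty}^0}\big)$, and taking square roots together with $\sqrt{\log(e+x)}\le\log(e+x)$ gives the claimed inequality.

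There is no genuine obstacle in this argument; it is the standard frequency-splitting proof of a logarithmic interpolation estimate. The only points deserving a little care are the integer rounding in the choice of $N$ and the verification that the logarithm stays bounded away from zero, so that the crude bound $\sqrt{\log}\lesssim\log$ used at the last step is legitimate.
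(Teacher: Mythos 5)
Your proof is correct and follows essentially the same route as the paper: a Littlewood--Paley splitting at a threshold $N$, bounding the low frequencies by $N\|v\|_{B_{2,\infty}^0}$ and the high ones by $2^{-N}\|v\|_{H^1}$, then choosing $N\simeq\log_2\big(e+\|v\|_{H^1}/\|v\|_{B_{2,\infty}^0}\big)$. The only cosmetic difference is that you work with squared norms and conclude via $\sqrt{\log}\le\log$, whereas the paper applies the triangle inequality directly to $\|v\|_{L^2}$.
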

\begin{proof}
Let $N\in \NN^*$ be a fixed number then using the dyadic decomposition we get
\begin{eqnarray*}
\|v\|_{L^2}&\le&\sum_{q\le N-1}\|\Delta_q v\|_{L^2}+\sum_{q\geq N}\|\Delta_q v\|_{L^2}\\
&\lesssim&N\|v\|_{B_{2,  \infty}^0}+2^{-N}\|v\|_{H^1}.
\end{eqnarray*}
Choosing $$
N=\Big[\log_2\Big(e+\|v\|_{H^1}/\|v\|_{B_{2,  \infty}^0}\Big)\Big]
$$
gives the desired result.
\end{proof}

\end{document}